\begin{document}

\def\Diff{\text{Diff}}
\def\Max{\text{max}}
\def\P{\mathbb P}
\def\R{\mathbb R}
\def\T{\mathbb{T}}
\def\N{\mathbb N}
\def\Z{\mathbb Z}
\def\C{\mathbb C}
\def\D{\mathbb D}
\def\a{{\underline a}}
\def\b{{\underline b}}
\def\n{{\underline n}}
\def\Log{\text{log}}
\def\loc{\text{loc}}
\def\inta{\text{int }}
\def\det{\text{det}}
\def\exp{\text{exp}}
\def\Re{\text{Re}}
\def\lip{\text{Lip}}
\def\leb{\text{Leb}}
\def\dom{\text{Dom}}
\def\diam{\text{diam}\:}
\def\supp{\text{supp}\:}
\newcommand{\ovfork}{{\overline{\pitchfork}}}
\newcommand{\ovforki}{{\overline{\pitchfork}_{I}}}
\newcommand{\Tfork}{{\cap\!\!\!\!^\mathrm{T}}}
\newcommand{\whforki}{{\widehat{\pitchfork}_{I}}}
\newcommand{\marginal}[1]{\marginpar{{\scriptsize {#1}}}}
\def é{{\' e}}
\def\sR{{\mathfrak R}}
\def\sM{{\mathfrak M}}
\def\sA{{\mathfrak A}}
\def\sB{{\mathfrak B}}
\def\sY{{\mathfrak Y}}
\def\sE{{\mathfrak E}}
\def\sP{{\mathfrak P}}
\def\sG{{\mathfrak G}}
\def\sa{{\mathfrak a}}
\def\sb{{\mathfrak b}}
\def\sc{{\mathfrak c}}
\def\se{{\mathfrak e}}
\def\sg{{\mathfrak g}}
\def\sd{{\mathfrak d}}
\def\sr{{\mathfrak {r}}}
\def\ss{{\mathfrak {s}}}
\def\sD{{\mathfrak {p}}}
\def\sp{{\mathfrak {p}}}
\def\arr{\overleftarrow}
\def\u{\underline}

\newtheorem{prop}{Proposition} [section]
\newtheorem{thm}[prop] {Theorem}
\newtheorem{conj}[prop] {Conjecture}
\newtheorem{defi}[prop] {Definition}
\newtheorem{lemm}[prop] {Lemma}

\newtheorem{prob}[prop] {Problem}

\newtheorem{sublemm}[prop] {Sub-Lemma}
\newtheorem{cor}[prop]{Corollary}
\newtheorem{theo}{Theorem}
\newtheorem{theoprime}{Theorem}
\newtheorem{Claim}[prop]{Claim}
\newtheorem{fact}[prop]{Fact}

\newtheorem{coro}[theo]{Corollary}
\newtheorem{defprop}[prop]{Definition-Proposition}
\newtheorem{propdef}[prop]{Proposition-Definition}

\newtheorem{question}[prop]{Question}
\newtheorem{conjecture}[prop]{Conjecture}

\theoremstyle{remark}
\newtheorem{exam}[prop]{Example}
\newtheorem{rema}[prop]{Remark}

\renewcommand{\thetheo}{\Alph{theo}}
\renewcommand{\thetheoprime}{\Alph{theo}$'$}
\newtheorem{propfonda}[theo]{\bf Fundamental property of the parablenders}

\title{
Lectures on Structural Stability in Dynamics
}

\author{Pierre Berger\footnote{CNRS-LAGA, Université Paris 13.}}

\date{}

\maketitle

\begin{abstract} 
These lectures  present  results and problems on the characterization of structurally stable dynamics. 
We will shed light those which do not seem to  depend on the regularity class (holomorphic or differentiable).
Furthermore, we will present some links between the problems of structural stability in dynamical systems and in singularity theory. 
\end{abstract}
\tableofcontents

\section*{Introduction}

Structural stability is one of the most basic topics in dynamical systems and contains some of the hardest conjectures.
  Given a class of regularity $\mathcal C$, which can be $C^r$ for $1\le r\le \infty$ or holomorphic, and formed by diffeomorphisms or endomorphisms of a manifold $M$, the problem is to describe the \emph{structurally stable dynamics} for the class $\mathcal C$. We recall that a dynamics $f$ is $\mathcal C$-structurally stable if for any perturbation $\hat f$ of $f$ in the class $\mathcal C$, there exists a homeomorphism $h$ of $M$ so that $h\circ f= f\circ h$.  
Uniform hyperbolicity seems to provide a satisfactory way to describe the structurally stable dynamics. This observation goes back to the Fatou conjecture for quadratic maps of the Riemannian sphere in 1920 and the Smale conjecture for smooth diffeomorphisms in 1970. These conjectures have been deeply studied by many mathematicians and so they are difficult to tackle directly. 

However at the interface of one-dimensional complex dynamics and differentiable dynamics, the field of two-dimensional complex dynamics  grew up recently. It enables to study the structural stability problem thanks to ingredients of both  fields. 
Also the mathematics  developed in the 1970's for the structural stability in dynamics is very similar to the one developed for the structural stability in singularity theory. This led us to combine both in the study of the structurally stable endomorphisms,  
We will review some classical works  in these beautiful fields,  some works more recent,   and we will present  new open problems at these interfaces.

In section \ref{hyp:sec}, we will recall some elementary definitions of uniform hyperbolic theory, and we will detail a few examples of such dynamics.  

In section \ref{secStabimplieshyp} we will state several theorems and conjectures suggesting the hyperbolicity of structurally stable dynamics. In particular we will recall the seminal work of Ma\~n\'e \cite{Ma88} showing this direction in the $C^1$-category. For holomorphic dynamical systems, we will present the work of Dujardin-Lyubich \cite{LD13}  and our work with Dujardin \cite{BD14} generalizing some aspects of 
Ma\~ n\'e-Sad-Sullivan and Lyubich theorems \cite{MSS, 
Ly84} for polynomial  automorphisms of $\C^2$. 

In section \ref{hypimpliesstab}, we will present several results in the directions ``hyperbolicty $\Rightarrow$ stability". In \textsection \ref{hypimpliesstab1}, we will recall classical results, including the structural stability theorems  of Anosov \cite{An67}, Moser\cite{Mo69} and Shub \cite{Shub69}, and the proof of this direction of the $\Omega$-stability conjecture by Smale\cite{Sm68} and Przytycki  \cite{Pr77}. 
Then in \textsection \ref{hypimpliesstab2}, we will sketch the  proof of this direction of  the structural stability theorem by Robbin \cite{Ro71} and Robinson \cite{Ro76} ; and we will relate a few works leading to a generalization of the Przytycki  conjecture \cite{Pr77}, a description of the structurally stable local diffeomorphisms. Finally in \textsection \ref{hypimpliesstab3}, we will recall our conjecture with Rovella \cite{BR13} stating a description of the endomorphisms (with possibly a non-empty critical set) whose inverse limit is structurally stable, and we will state our theorem with Kocsard \cite{BK13} showing one direction of this conjecture. 

In section \ref{sectionLinks}, we will recall several results from singularity theory and we will emphasize on their similarities with those of structural stability.

In section \ref{Sec_SS_endo_w_Singu}, we will present the work
\cite{Be12} which  states sufficient conditions for a smooth map with non-empty critical set to be structurally stable.  The statement involves developments of 
Mather's theorem on Singularity Theory of composed mappings. It suggests the problem of the description of the structurally stable, surface endomorphisms among those which display singularity but satisfy the axiom A.

\medskip
\thanks{These notes were written while I was giving lectures at Montevideo in 2009
and at the Banach Center in 2016. I am very grateful for their hospitality.
}
\section{Uniformly hyperbolic dynamical systems}\label{hyp:sec}
The theory of \emph{uniformly hyperbolic dynamical systems} was constructed in the 
1960's under the dual leadership of Smale in the USA, and Anosov and Sinai in the Soviet Union.\footnote{A few sentences of this section are taken from \cite{BY14}.}
\par
 It encompasses various examples that we shall recall: expanding maps, horseshoes, solenoid maps, Plykin attractors,  Anosov maps, DA, blenders all of which are \emph{basic pieces}. 

\subsection{Uniformly hyperbolic  diffeomorphisms}
 Let $f$ be a $C^1$-diffeomorphism $f$ of a finite dimensional manifold $M$. A compact $f$-invariant subset $\Lambda \subset M$ is 
\emph{uniformly hyperbolic} if the restriction to $\Lambda$ of the tangent bundle $TM$
splits into two continuous invariant subbundles
\[TM|\Lambda = E^s\oplus E^u,\]
$E^s$ being  uniformly contracted and $E^u$ being uniformly expanded: 
 $\exists \lambda<1$, $\exists C>0$,  
\[\|T_xf_{|E^s}^n\|<C\cdot\lambda^n\quad\mathrm{and} \quad \|T_xf_{|E^u}^{-n}\|<C\cdot\lambda^n, \quad \forall x\in \Lambda, \forall n\ge 0.\]

\begin{exam}[Hyperbolic periodic point]
A periodic point at which the differential has no eigenvalue of modulus $1$ is called hyperbolic. It is a sink if all the eigenvalues are  of modulus less than 1, a source if all of them are of modulus greater than 1, and a saddle otherwise. 
\end{exam}

\begin{defi} A \emph{hyperbolic attractor}  is a hyperbolic, transitive compact subset $\Lambda$ such that there exists a neighborhood  $N$ satisfying $\Lambda = \cap_{n \geq 0} f^n(N)$.
\end{defi}
\begin{exam}[Anosov] If the compact hyperbolic set is equal to the whole compact manifold, then the map is called \emph{Anosov}. 
For instance if a map $A\in SL_2(\Z)$ has both eigenvalues of modulus not equal to $1$, then it acts on the torus $\R^2/\Z^2$ as an Anosov diffeomorphism. The following linear map satisfies such a property:  
\[A:= \left[\begin{array}{cc}
2&1\\
1&1\end{array}\right]\; .\]
\end{exam}

\begin{exam}[Smale solenoid]
We consider a perturbation of the map of the filled torus 
$\mathbb T := \{(\theta, z)\R/\Z \times \mathbb C: |z|<1\}$ 
:
\[(\theta, z)\in \mathbb T \mapsto (2\theta, 0)\in  \mathbb T \;,\]
which is a diffeomorphism onto its image. This is the case of the following:  
\[(\theta, z)\in \mathbb T \mapsto (2\theta, \epsilon \cdot z+ 2\epsilon\cdot  \exp(2\pi i \theta )\in  \mathbb T \;.\]
This defines a hyperbolic attractor called the \emph{Smale solenoid}.
\end{exam}
\vspace{1cm}
\begin{exam}[Derivated from Anosov (DA) and Plykin attractor]
We start with a linear Anosov of the 2-torus $\R^2/\Z^2$. It fixes the point 0.  In local coordinates $\phi$ of a neighborhood $V$ of $0$, it has the form for $0<\lambda<1$:
$$(x,y)\mapsto (\lambda x, y/\lambda).$$ 
For every $\epsilon>0$, let $\rho_\epsilon$ be a smooth function 
so that: 
\begin{itemize}
\item it is equal to $x\mapsto \lambda x$ outside of the interval $(-2\epsilon, 2\epsilon)$,
\item $\rho_\epsilon$ displays exactly three fixed point: $-\epsilon$ and $\epsilon$ which are contracting and  $0$ which is expanding. 
\end{itemize}
Let $DA$ be the map of the two torus equal to $A$ outside of $V$, and in the coordinate $\phi$ it has the form:
 $$(x,y)\mapsto (\rho_\epsilon( x), y/\lambda).$$ 

We notice that $0$ is an expanding the fixed point of DA. The complement of its repulsion basin is a hyperbolic attractor. 
   
\begin{figure}[h!]
	\centering
		\includegraphics[width=5cm]{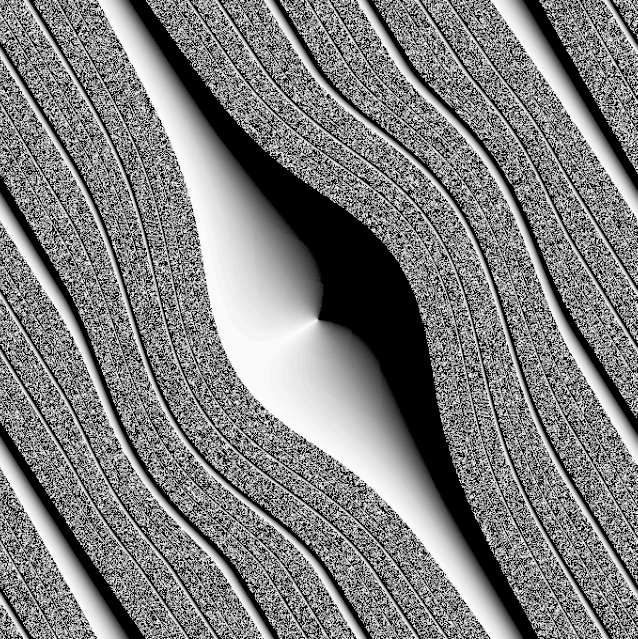}
	\caption{Derivated of Anosov (Credit Y. Coudene \cite{Co06})}
\end{figure}

The DA attractor project to a basic set of a surface attractor, the \emph{Plykin attractor}. 

\begin{figure}[h!]
	\centering
		\includegraphics[width=9cm]{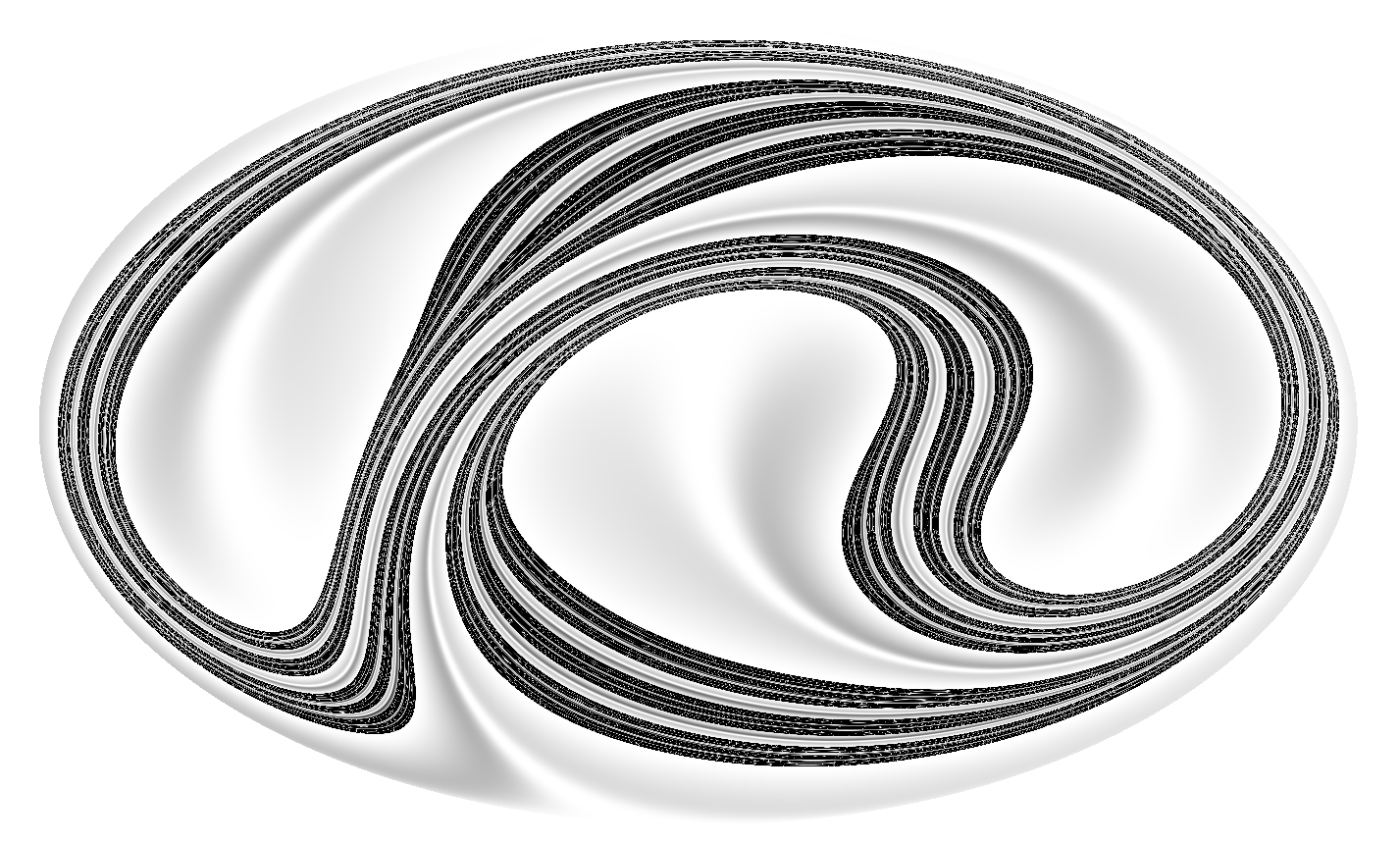}
	\caption{Plykin attractor (Credit S. Crovisier)}
\end{figure}
\end{exam}

Given a hyperbolic compact set $\Lambda$, for every $z\in \Lambda$, the sets 
\[W^s(z)= \{z'\in M:\; \lim_{ n\to+\infty} d(f^n(z),f^n(z'))= 0\},\]
\[W^u(z)= \{z'\in M:\; \lim_{ n\to-\infty} d(f^n(z),f^n(z'))= 0\}\]
are called \emph{the stable and unstable manifolds} of $z$. They are immersed manifolds tangent at $z$ to respectively $E^s(z)$ and $E^u(z)$.

The  \emph{$\epsilon$-local  stable manifold} $ W^s_\epsilon(z)$ of $z$ is the connected component of $z$ in the intersection of $W^s(z)$ with a $\epsilon$-neighborhood  of $z$. The \emph{$\epsilon$-local  unstable manifold} $ W^u_\epsilon(z)$ is defined likewise.

\begin{prop}
For $\epsilon>0$ small enough, the subsets  $ W^s_\epsilon(z)$ and $ W^u_\epsilon(z)$ are $C^r$-embedded manifolds which 
depend continuously on $z$ and tangent at $z$ to respectively  $E^s(z)$ and $E^u(z)$.
\end{prop}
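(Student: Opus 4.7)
The plan is to apply the Hadamard graph transform to a family of graphs parametrized by $z\in\Lambda$. The case of $W^u_\epsilon(z)$ reduces to that of $W^s_\epsilon(z)$ applied to $f^{-1}$, so I focus on constructing the local stable manifolds. For each $z\in\Lambda$, introduce the exponential chart $\exp_z\colon T_zM\to M$ and the local representative $f_z:=\exp_{f(z)}^{-1}\circ f\circ \exp_z$, defined on a small ball of $T_zM=E^s(z)\oplus E^u(z)$. Using continuity of the splitting and uniform hyperbolicity, I may pass to a Lyapunov adapted norm such that the diagonal blocks of $Df_z(0)$ contract and expand by a uniform factor $\lambda'\in(\lambda,1)$, while the off-diagonal blocks vanish. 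Writing $f_z=(F^s_z,F^u_z)$, the map $f_z$ is then close to a hyperbolic linear map on the ball of radius $\epsilon$ for $\epsilon$ small enough.

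Next, consider the bundle $\mathcal G=\sqcup_{z\in\Lambda}\mathcal G_z$ whose fiber $\mathcal G_z$ consists of $1$-Lipschitz maps $\sigma\colon B^s_\epsilon(z)\to B^u_\epsilon(z)$ with $\sigma(0)=0$, equipped with the $C^0$-norm, where $B^s_\epsilon(z)$ and $B^u_\epsilon(z)$ are the $\epsilon$-balls in $E^s(z)$ and $E^u(z)$. For $\epsilon$ small enough, the preimage under $f_z$ of the graph of any $\sigma\in\mathcal G_{f(z)}$, restricted to $B^s_\epsilon(z)\times B^u_\epsilon(z)$, is the graph of a unique element $\mathcal T\sigma\in\mathcal G_z$; solvability of the implicit equation defining $\mathcal T\sigma$ uses the expansion of $F^u_z$ in the $E^u$-direction. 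A direct estimate shows that $\mathcal T$ is a $\lambda'$-contraction on each fiber, uniformly in $z$, and hence admits a unique invariant continuous section $z\mapsto\sigma_z$ by Banach's fixed point theorem applied to the space of continuous sections. The graph of $\sigma_z$ is then identified with $W^s_\epsilon(z)$ in the chart: the inclusion $\subset$ follows from iterating $f$ on the graph and the geometric decay of distances, while $\supset$ follows from uniqueness together with the observation that any point whose forward orbit remains $\epsilon$-close to the orbit of $z$ must lie on an invariant Lipschitz graph. The tangency $T_zW^s_\epsilon(z)=E^s(z)$ follows since $\sigma_z(0)=0$ and $D\sigma_z(0)=0$, the latter obtained by applying the same contraction argument to the derivative cocycle on the Grassmannian bundle of $\dim E^s$-planes, whose fixed section is $z\mapsto E^s(z)$ itself.

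The delicate part, and the main obstacle, is promoting $\sigma_z$ to a $C^r$ map depending continuously on $z$. The fiber contraction above only yields $C^0$-convergence of graphs, so $C^r$-regularity of the fixed section does not follow from Banach's principle alone. The standard remedy is to invoke the $C^r$-section theorem of Hirsch-Pugh-Shub, applied to the bundle of $r$-jets of graphs over $\Lambda$, which guarantees a $C^r$-fixed section provided the fiber contraction rate dominates appropriate powers of the base derivative --- concretely, a condition of the form $\lambda'\cdot\|Df|_\Lambda\|^r<1$, which holds after replacing $f$ by a suitable iterate. Continuity of $z\mapsto W^s_\epsilon(z)$ in the $C^r$-topology on embeddings is then built into the same theorem, since the contraction is uniform in $z$ and $z\mapsto f_z$ is continuous in the $C^r$-topology on local representatives.
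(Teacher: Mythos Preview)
The paper does not give its own proof; it only cites Yoccoz's notes. Your graph-transform approach is the standard one and almost certainly what is found there, so the first two paragraphs are correct.

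The third paragraph, however, contains a genuine error. The bunching-type condition $\lambda'\cdot\|Df|_\Lambda\|^r<1$ you invoke is \emph{not} what is needed for $C^r$ regularity of the individual leaves; conditions of that shape govern the \emph{transverse} regularity of the stable lamination, which is much stronger than the mere continuous dependence the proposition asks for. Worse, your claim that the condition can be arranged by passing to an iterate is false: replacing $f$ by $f^n$ sends $\lambda'\mapsto(\lambda')^n$ and $\|Df|_\Lambda\|$ to something of order $\|Df|_\Lambda\|^n$, so the inequality $\lambda'\,\|Df\|^r<1$ is scale-invariant under iteration and cannot be forced this way. The correct observation is that the graph transform already contracts $r$-jets automatically: in the linearized model $\mathcal T\sigma(x)\approx B^{-1}\sigma(Ax)$ with $\|A\|,\|B^{-1}\|\le\lambda'$, one gets $\|D^r(\mathcal T\sigma)\|\le\|B^{-1}\|\,\|A\|^r\,\|D^r\sigma\|\le(\lambda')^{r+1}\|D^r\sigma\|$, so no extra hypothesis is required. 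One then either applies the Hirsch--Pugh fiber contraction principle inductively on $r$ (base the space of $C^{r-1}$ graphs, fiber the candidate $r$-th derivatives), or simply notes that $\mathcal T$ preserves a $C^r$-bounded set while contracting in $C^0$, whence the fixed point lies in $C^r$; continuous dependence in the $C^r$ topology then follows from the uniformity of the contraction, exactly as you say --- just without the spurious hypothesis.
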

A nice proof of this proposition can be found in \cite{Yoccozintro}.

\begin{defi} 
A \emph{basic set}  is a compact, $f$-invariant, transitive,  uniformly hyperbolic  set $\Lambda$ which is \emph{locally maximal}:
there exists   a neighborhood $N$ of $\Lambda$ such that $\Lambda = \cap_{n\in \Z} f^n(N)$.
\end{defi}

\begin{exam}[Horseshoe] 
A \emph{horseshoe} is a basic set which is a Cantor set. 
For instance take two disjoint sub-intervals $I_+\sqcup I_- \subset [0,1]$, and let $g \colon I_+\sqcup I_-  \to [0,1]$ be a locally affine map which sends each of the intervals $I_\pm$  onto $[0,1]$.  Let $g_+$ be its inverse branch with value in $I_+$ and let $g_-$ be the other inverse branch. 
Let $f$ be a diffeomorphism of the plane whose restriction to $I_\pm \times [0,1]$ is:
\[(x,y)\in  (I_+\sqcup I_-) \times [0,1] \to 
\left\{ \begin{array}{c}
(g(x),g_+(y)) \quad \text{ if }x\in I_+\\
(g(x),g_-(y)) \quad \text{ if }x\in I_-\end{array}\right.\]
\begin{figure}[h!]
		\centering
			\includegraphics[width=7cm]{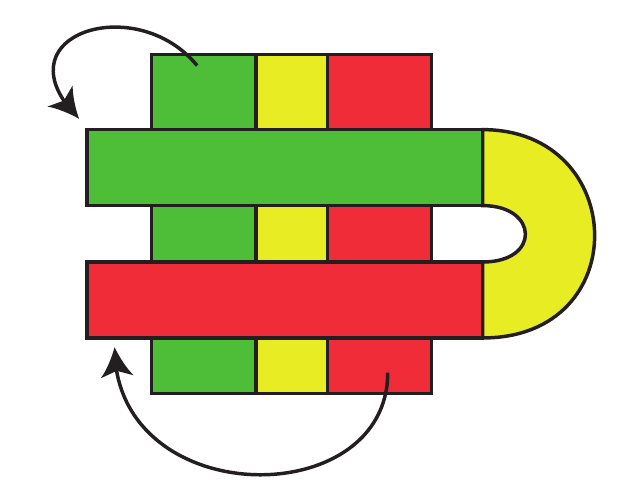}
		\caption{Smale's Horseshoe}
	\end{figure}
\end{exam}

\begin{rema} 
Usually, one defines a basic piece as a hyperbolic set included in the closure of the set of its periodic points. Actually the three following assertion are equivalent for every uniformly hyperbolic, transitive, compact set $K$:
\begin{itemize}
\item  $K$ is locally maximal.
\item $K$ has a structure of local product : for $\epsilon>0$ small enough, and any $x,y\in K$ close enough, the intersection point $W^u_{\epsilon} (x)\cap W^s_{\epsilon} (y)$ belongs to $K$.
\item $K$ is included in the closure of the set of periodic points in $K$: $K= cl(Per(f|K))$. 
\end{itemize}
The equivalence of these conditions is proved in \cite{shubstab78}.
\end{rema}

\begin{defi}[Axiom A] A diffeomorphism whose non-wandering set is a finite union of disjoint  basic sets is called   \emph{axiom A}.
 \end{defi}
 
 \bigskip
 \begin{exam}[Morse-Smale] 
A  Morse-Smale diffeomorphism is a diffeomorphism of a surface so that its non-wandering set consists of finitely many periodic hyperbolic points, and their  stable and unstable manifolds are transverse.
\begin{figure}[h!]
	\centering
		\includegraphics[width=7cm]{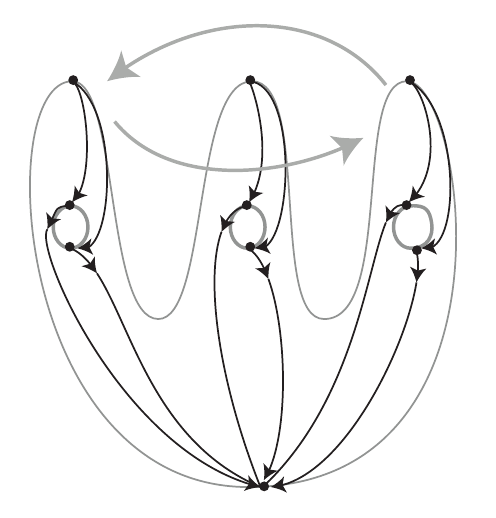}
	\caption{Morse-Smale}
\end{figure}
 \end{exam}

\subsection{Uniformly hyperbolic endomorphisms}

A \emph{$C^r$-endomorphism} of a manifold $M$ is a differentiable map of class $C^r$ of $M$, which is not necessarily injective, nor surjective, and that may possess points at which the differential is not onto (called critical points). The \emph{critical set} is the subset of $M$ of formed by the critical points. 

A \emph{local $C^r$-diffeomorphism} is a $C^r$-endomorphism without critical point.

A compact subset $\Lambda\subset M$ is \emph{invariant} for an endomorphism $f$ of $M$ if $f^{-1} (\Lambda)=\Lambda$. 
A compact subset $\Lambda\subset M$ is \emph{stable} for an endomorphism $f$ of $M$ if $f (\Lambda)=\Lambda$.

An invariant compact set is \emph{hyperbolic} if there exists a subbundle $E^s\subset TM \Lambda$ which is left invariant  and uniformly contracted by $Df$ and so that the action of $Df$ on $TM/E^s$ is uniformly expanding.

\begin{exam}[Expanding map]
Let $f\in End^1(M)$ and  an invariant stable, compact  subset $K$ is \emph{expanded} if there exists $n\ge 1$ s.t., for every $x\in K$,  $D_xf^n$ is invertible and with contracting inverse. When  $K=M$, $f$ is said \emph{expanding}.
\end{exam}   
\begin{exam}[Anosov endomorphism] If a hyperbolic set is equal to the whole manifold, then the endomorphism is called \emph{Anosov}. 
For instance this is the case of the dynamics on the torus $\mathbb R^2/\mathbb Z^2$ induced by a linear maps in $M_2(\Z)$ with eigenvalues of modulus not equal to 1. For instance, it the case of the following for every $n\ge 2$:
\[\left[\begin{array}{cc}
n&1\\
1&1\end{array}\right]\]
\end{exam}    


The stable manifold of $z$ in a hyperbolic set $\Lambda$ of an endomorphism is defined likewise: 
\[W^s(z)= \{z'\in M:\; \lim_{ n\to+\infty} d(f^n(z),f^n(z'))= 0\}.\] 

The unstable manifold depends on the preimages.  For every orbit $\underline z= (z_n)_{n\in \Z} \in \Lambda^\Z$, has an unstable manifold:  
\[W^u(\underline z)= \{z'\in M:\; \exists (z'_n)_n \text{ orbit s.t. } \lim_{ n\to-\infty} d(z_n,z'_n)= 0\}.\]

If $f$ is a local diffeomorphism then $W^s$ and $W^u$ are immersed, but in general only $W^s$ is injectively immersed.

\begin{defi}A hyperbolic set $\Lambda$ is a \emph{basic piece} if it is locally maximal.
\end{defi}

\begin{exam}[Blender]\label{blender}
A blender of surface endomorphism is a basic set so that $C^1$-robustly its local unstable manifold cover an open subset of the surface. 

For instance let $I_-$ and $I_+$ be two disjoint segments of $[-1,1]$, and let $Q$ be a map which sends affinely each of these segments onto $[-1,1]$. 
 This is the case for instance of the following map:
\[(x,y)\in [-1,1]^2\mapsto \left\{\begin{array}{cl}
(Q(x), (2 y+1)/3)& x\in I_+\\
 (Q(x), (2 y-1)/3)& x\in I_-\end{array}\right. \]

\begin{figure}[h!]
	\centering
		\includegraphics[width=7cm]{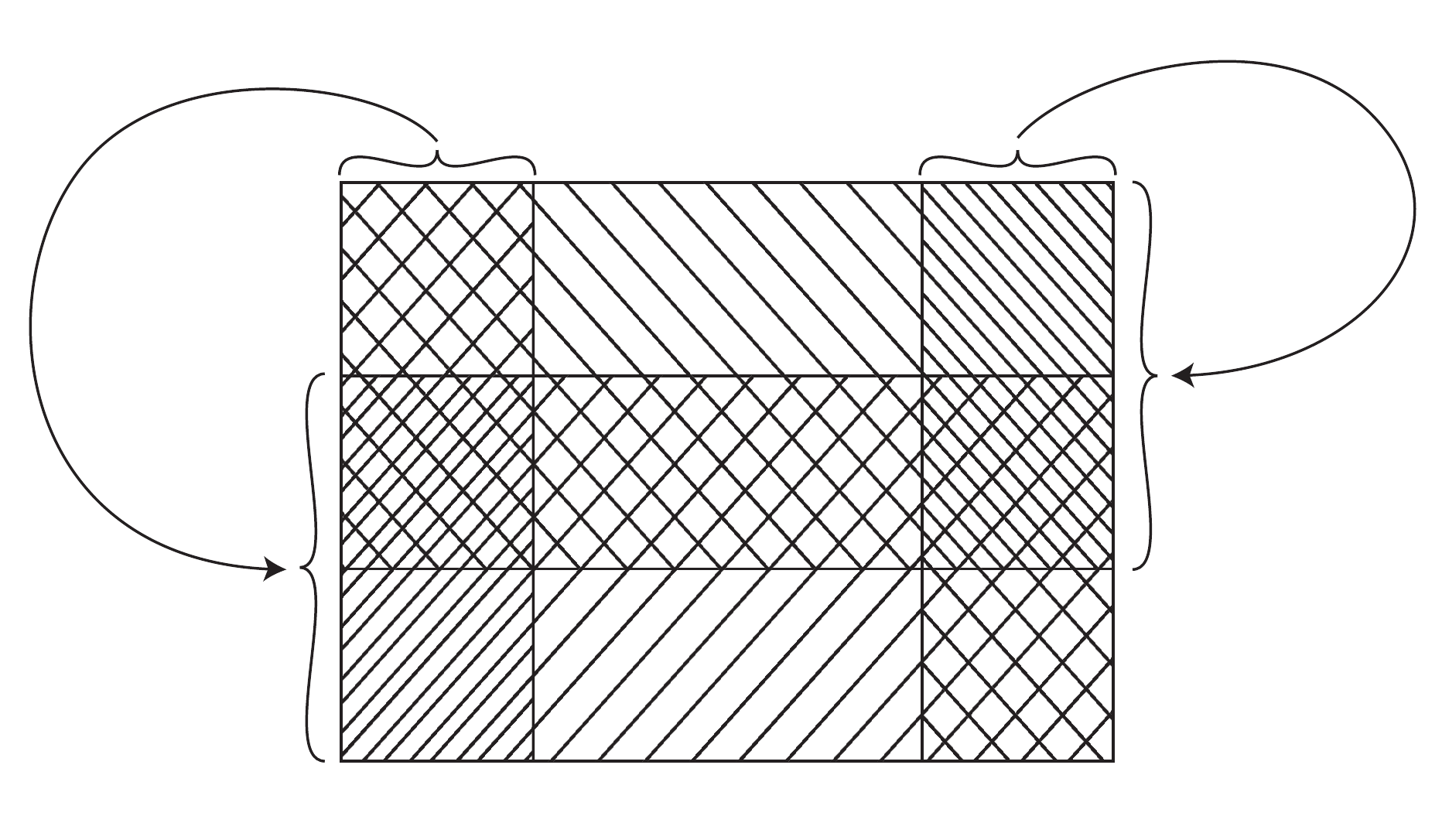}
	\caption{Blender of a surface local diffeomorphism}
\end{figure}

\end{exam}    
\begin{defi}An endomorphism satisfies  \emph{axiom A} if its non-wandering set is a finite  union of basic pieces.\end{defi}

\section{Properties of structurally stable dynamics}\label{partstabstruct}
\label{secStabimplieshyp}
Let us sate some definitions and conjectures on the structural stability in the $C^r$-category, $1\le r\le \infty$ and in the holomorphic category denoted by $\mathcal H$. 
Let $\mathcal C$ be a category in $\{C^r: 1\le r\le \infty\} \cup \{\mathcal H\}$. 

 \begin{defi}[Structural stability] A $\mathcal C$-map $f$ is \emph{structurally stable}  if every $\mathcal C$-perturbation $f'$ of the dynamics is conjugated: there exists a homeomorphism $h$ of the manifold so that $h\circ f= f'\circ h$. 
\end{defi}
A weaker notion of structural stability focuses on the non-wandering set $\Omega_f$ of the dynamics $f$.
\begin{defi}[$\Omega$-stability] A $\mathcal C$-map $f$ is \emph{$\Omega$-stable}  if for every $\mathcal C$-perturbation $f'$ of $f$, the dynamics of the restriction of $f$ to $\Omega_f$ is conjugated (via a homeomorphism) to the restriction of $f'$ to its non-wandering set $\Omega_{f'}$.
\end{defi}

We recall that an axiom A diffeomorphism $f$ satisfies the \emph{strong transversality condition} if its stable and unstable manifolds intersect transversally. Here is an outstanding conjecture:
\begin{conj}[Palis-Smale  structural stability conjecture, 1970 \cite{PaSm68}]
A $\mathcal C$-diffeomorphism is structurally stable if and only if it satisfies  axiom A and the strong transversality condition. 
\end{conj}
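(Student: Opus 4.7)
The plan is to treat the two implications separately, since the forward direction is essentially available from the machinery recalled earlier in the notes, while the converse is the genuine heart of the conjecture.

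For \emph{Axiom A $+$ strong transversality $\Rightarrow$ structural stability}, the strategy is that of Robbin--Robinson, which will be sketched in \textsection\ref{hypimpliesstab2}. One uses the spectral decomposition of $\Omega_f$ into finitely many basic pieces $\Lambda_1,\dots,\Lambda_k$; on a small invariant neighborhood of each $\Lambda_i$, the local product structure and the Anosov shadowing lemma yield a canonical local conjugacy with the nearby perturbation $\hat f$. The remaining task is to glue these local conjugacies across the wandering set, which is partitioned into the heteroclinic layers $W^u(\Lambda_i)\cap W^s(\Lambda_j)$. Strong transversality is precisely the hypothesis that makes the linearized cocycle equation $h\circ f - \hat f\circ h = 0$ solvable in a Banach space of continuous sections of $f^\ast TM$: an implicit function / contraction-mapping argument then produces the global homeomorphism $h$.

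For \emph{structural stability $\Rightarrow$ Axiom A and strong transversality}, the plan is to follow Mañé's strategy \cite{Ma88}. First, structural stability forces every periodic orbit of $f$ to be hyperbolic, since a non-hyperbolic periodic orbit can be broken into topologically inequivalent configurations by an arbitrarily small perturbation. Next, Pugh's $C^1$ closing lemma allows one to approximate any non-wandering point by periodic ones, so uniform hyperbolicity of the periodic set must be upgraded to uniform hyperbolicity on all of $\Omega_f$; this upgrade is the content of Mañé's ergodic closing lemma combined with Pliss-type dominated-splitting estimates, which together yield Axiom A. Strong transversality then follows because any stable/unstable tangency could be unfolded, through an arbitrarily small perturbation, into a non-conjugate configuration, contradicting stability.

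The main obstacle, and the reason the conjecture remains open for $r\geq 2$, is that both the closing lemma and the connecting lemma are available only in the $C^1$ category, as is Franks' lemma which realizes prescribed perturbations of $Df$ along periodic orbits as actual perturbations of $f$. For $r\geq 2$ there is currently no mechanism for turning a recurrent orbit into a periodic one by a small perturbation, which blocks the central step of producing enough hyperbolic periodic points to force Axiom A. A plausible route forward would be to replace Pugh's lemma by a parameter-dependent perturbation scheme in the spirit of Pujals--Sambarino's work on dominated splittings, or, in the holomorphic category $\mathcal H$, to exploit holomorphic motions and the density of repelling cycles in the Julia set along the lines of the Mañé--Sad--Sullivan, Lyubich and Dujardin--Lyubich results cited in the introduction.
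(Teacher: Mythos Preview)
This statement is recorded in the paper as a \emph{conjecture}, not a theorem; the paper does not prove it. What the paper does establish is the forward implication (Theorem~\ref{SSTRob}, Robbin--Robinson), while for the converse it only cites Ma\~n\'e's $C^1$ result and explicitly leaves $r>1$ and the holomorphic case open. Your proposal is honest about this state of affairs: you correctly flag that the closing lemma and Franks' lemma are $C^1$-only tools and that this is what blocks the converse for $r\ge 2$. So neither you nor the paper proves the full conjecture, and you say so; there is no hidden idea in the paper that you are missing.

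Where your sketch does diverge from the paper is in the forward implication. You present it as: build local conjugacies near each basic piece via shadowing, then ``glue'' them across the heteroclinic layers $W^u(\Lambda_i)\cap W^s(\Lambda_j)$. The argument in \textsection\ref{hypimpliesstab2} is global from the outset: one linearizes the conjugacy equation to $\Psi(\sigma)=\sigma-Df\circ\sigma\circ f^{-1}$ on $\Gamma$, and the crux is constructing an explicit \emph{left} inverse $J$ of $\Psi$. This is done not by gluing local solutions but by extending the hyperbolic splitting to continuous, backward/forward invariant plane fields $E^s_i,E^u_i$ over filtration neighborhoods $N_i$ (Proposition~\ref{sectionEi}), and writing $J$ as a signed geometric series in the associated projections; strong transversality is exactly what permits these extensions. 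Injectivity of the resulting $h$ is then a separate issue---axiom A diffeomorphisms are generally not expansive, so the expansiveness argument used for Anosov pieces fails---and is handled by Robbin's $d_f$-metric trick: one arranges the fixed point $\sigma$ to have small Lipschitz constant for $d_f(x,y)=\sup_n d(f^n(x),f^n(y))$. Your outline omits both the extended-bundle construction and the $d_f$-injectivity step, which are the two substantive technical ingredients; the ``gluing'' picture you describe is a reasonable heuristic but not how the proof in the paper actually runs.
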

%

For complex rational maps of the sphere, this conjecture takes the form:
\begin{conj}[Fatou Conjecture, 1920]
Structurally stable quadratic map are those which  satisfy  axiom A and whose critical points are not periodic.
\end{conj}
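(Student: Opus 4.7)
The plan is to treat the two implications of the conjecture separately.

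For the direction ``axiom A and critical point not periodic $\Rightarrow$ structurally stable'', I would combine the Ma\~n\'e--Sad--Sullivan theory with the classical structural stability machinery. Uniform hyperbolicity on the Julia set $J_c$ yields, via a $\lambda$-lemma/Anosov-type argument adapted to the holomorphic setting, a holomorphic motion of $J_c$ over a neighborhood $U$ of $c$ in parameter space, which extends to a topological conjugacy on a neighborhood of $J_c$. The assumption that the critical orbit is attracted to a non-degenerate attracting cycle ensures that Koenigs linearization is robust under perturbation, so the basin structure (including the critical Fatou component) is preserved; one then extends the conjugacy to all of $\widehat\C$ by equivariant uniformization of the attracting basins.

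For the converse direction, suppose $f_c$ is structurally stable. First, if the critical point $0$ is periodic, the corresponding super-attracting cycle has multiplier zero, whereas an arbitrarily small perturbation of $c$ produces a nearby attracting cycle with non-zero multiplier; since the local conjugacy class of a super-attracting point differs from that of a geometrically attracting point (different multiplicity of the critical value in the basin), this breaks structural stability on any neighborhood of the cycle. Next, by Ma\~n\'e--Sad--Sullivan, structural stability implies $J$-stability, so $c$ lies in a connected component $\mathcal U$ of the set of $J$-stable parameters in the quadratic family. It remains to show that every such $\mathcal U$ is a hyperbolic component of the Mandelbrot set, i.e.\ that there is no ``queer'' component on which the critical orbit stays bounded but is never attracted to an attracting cycle.

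The main obstacle is precisely this exclusion of queer components, which is equivalent to the density of hyperbolicity in the quadratic family and would follow from local connectivity of the Mandelbrot set (MLC). I would attack it along the combinatorial classification of quadratic maps: Yoccoz's theorem, via the Yoccoz puzzle and shrinking of puzzle pieces, handles parameters that are at most finitely renormalizable; for infinitely renormalizable parameters, one would invoke \emph{a priori} (complex) bounds \`a la Kahn--Lyubich to obtain rigidity through the renormalization tower. The essential and still open difficulty lies in the infinitely renormalizable case with unbounded combinatorics, where current \emph{a priori} bounds do not suffice to control geometric degeneration; this is why the Fatou conjecture, although reduced to a precise rigidity statement, remains an outstanding open problem.
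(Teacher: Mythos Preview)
The statement you are addressing is recorded in the paper as a \emph{conjecture}, not as a theorem: the paper gives no proof, and explicitly flags it as open (noting only that the real-quadratic case was settled by Graczyk--\'Swi\k{a}tek and Lyubich, and that MLC would imply it). So there is no proof in the paper for your proposal to be compared against.

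Your write-up is, appropriately, not a proof but a survey of the expected strategy, and as such it is accurate and well organized. The forward direction (hyperbolic with non-periodic critical point $\Rightarrow$ structurally stable) is indeed classical and follows the lines you indicate. For the converse, you correctly isolate the exclusion of queer components as the heart of the matter and correctly locate the remaining obstruction in the infinitely renormalizable, unbounded-combinatorics regime. That is exactly the current state of affairs, and it matches the paper's brief remarks.

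One small sharpening: in ruling out a periodic critical point, the multiplier is not the right invariant to invoke, since multipliers are not preserved under topological conjugacy. The clean argument is that a conjugacy must send critical points to critical points (local degree is topological) and periodic points to periodic points of the same period; for $f_0(z)=z^2$ the unique finite critical point is fixed, while for $f_c$ with $c\neq 0$ small it is not, so no conjugacy can exist. Your parenthetical about ``multiplicity of the critical value in the basin'' gestures in the right direction but is not quite the operative invariant.
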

Actually the initial Fatou conjecture stated the density of axiom A quadratic map. However, in section \ref{secStabimplieshyp},  we will recall the works of Ma\~ n\'e-Sad-Sullivan \cite{MSS} and Lyubich \cite{Ly84} showing the existence of an open and dense set of structurally stable rational maps. This implies the equivalence between the original Fatou Conjecture and the above conjecture.
 Among real quadratic maps, this conjecture\footnote{
The Fatou conjecture is implied by the Mandelbrot Locally connected (MLC) conjecture that we will not have the time to recall in this manuscript.}
 has been proved by Graczyk-Swiantek \cite{GS97} and Lyubich \cite{Ly97}. 
 

The description of $\Omega$-stable maps involves the no-cycle condition. We recall that any axiom A diffeomorphisms displays a non-wandering set $\Omega$ equal to a finite union of basic pieces $\Omega= \sqcup_i \Omega_i$.  The family  $(\Omega_i)_i$ is called the \emph{spectral decomposition}.  

 \begin{defi}[No-cycle condition]
An axiom A  diffeomorphism satisfies the \emph{no-cycle condition} if given $\Omega_1,\Omega_2, \dots, \Omega_n$ in the spectral decomposition, if $W^u(\Omega_i)$ intersects $W^s(\Omega_{i+1})$ for every $i<n$ and if 
$W^u(\Omega_n)$ intersects $W^s(\Omega_{1})$, then $\Omega_1=\Omega_2= \cdots = \Omega_n$.
\end{defi}

\begin{conj}[Smale $\Omega$-Stability Conjecture, \cite{Sm68}]
A $\mathcal C$-diffeomorphisms is structurally stable if and only if it satisfies  axiom A and the no-cycle condition.
\end{conj}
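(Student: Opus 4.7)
The plan is to address the two implications separately; here the statement should be read as concerning $\Omega$-stability, consistent with the title of the conjecture. For the direction axiom A plus no-cycle $\Rightarrow$ $\Omega$-stability --- which is in fact the known theorem of Smale (1970) and Palis for $\mathcal C$-diffeomorphisms --- I would work from the spectral decomposition $\Omega_f = \bigsqcup_i \Omega_i$ into basic pieces. The first step is persistence: because each $\Omega_i$ is a locally maximal uniformly hyperbolic set with local product structure, for any $\mathcal C$-small perturbation $\tilde f$ there is a locally maximal hyperbolic continuation $\tilde\Omega_i$ together with a canonical conjugacy $h_i\colon\Omega_i\to\tilde\Omega_i$ intertwining $f$ and $\tilde f$, obtained by combining the shadowing lemma with the expansiveness of hyperbolic sets.

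The key step is to convert the no-cycle condition into a geometric filtration. I would show that the no-cycle hypothesis makes the relation $\Omega_i\preceq\Omega_j$ iff $W^u(\Omega_i)\cap W^s(\Omega_j)\neq\emptyset$ a partial order on the spectral decomposition, and then --- using Conley's theory of attractor--repeller pairs, or equivalently Smale's original adapted filtration construction --- produce a nested family of compact sets $\emptyset=M_0\subset M_1\subset\cdots\subset M_N=M$ with $f(M_i)\subset\mathrm{int}\,M_i$ such that each basic piece lies in exactly one layer $M_i\setminus M_{i-1}$. Such a filtration is open under $\mathcal C$-perturbation: the inclusions $\tilde f(M_i)\subset \mathrm{int}\, M_i$ persist, so the same filtration isolates the continuations $\tilde\Omega_i$ and forces $\Omega_{\tilde f}=\bigsqcup_i\tilde\Omega_i$ with the same combinatorial order. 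Gluing the individual conjugacies $h_i$ piece by piece then yields a homeomorphism $\Omega_f\to\Omega_{\tilde f}$ conjugating $f|\Omega_f$ to $\tilde f|\Omega_{\tilde f}$.

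The converse direction, $\Omega$-stability $\Rightarrow$ axiom A plus no-cycle, is the principal obstacle and is precisely why the statement remains a conjecture for $r\geq 2$ and in the holomorphic category. The only setting in which it is established is $\mathcal C=C^1$, by Ma\~n\'e's theorem, whose strategy I would follow: use Pugh's $C^1$ closing lemma to reduce the problem to the closure of periodic orbits, then combine Ma\~n\'e's ergodic closing lemma with a fine analysis of dominated splittings to show that any non-hyperbolic periodic orbit can be perturbed to create an $\Omega$-explosion, and finally invoke a Palis-type argument that a cycle between basic pieces can be unfolded to destroy $\Omega$-conjugacy. I would present this half only as a sketch of Ma\~n\'e's strategy and flag the unavailability of a higher-regularity closing lemma as the fundamental obstruction beyond $C^1$.
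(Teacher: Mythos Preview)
Your proposal is correct and follows essentially the same route as the paper. You rightly read the statement as concerning $\Omega$-stability, and for the direction axiom A plus no-cycle $\Rightarrow$ $\Omega$-stability your argument --- persistence of basic pieces via hyperbolic structural stability, the no-cycle condition yielding a filtration through Conley-type arguments, and then using local maximality within each filtration layer to conclude --- is exactly the sketch the paper gives for Smale's $\Omega$-stability theorem in \textsection\ref{hypimpliesstab1}; the only cosmetic difference is that the paper invokes Anosov's theorem (proved there via the implicit function theorem) for persistence where you cite shadowing plus expansiveness, but these are equivalent mechanisms. For the converse you correctly flag that it is conjectural beyond $C^1$ and point to Ma\~n\'e's and Palis's work, which is precisely how the paper treats it.
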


If the above conjectures turn out to be true then they would display a satisfactory description of structurally stable dynamics (for the axiom A diffeomorphisms are very well understood).

Let us define the probabilistic structural stability, which is implied by the 
 $\Omega$-stability. The definition involves the regular subset $\mathcal R_f$ of $\Omega_f$. This subset is formed by the points $p\in \Omega_f$ so that for every $a\in \{s,u\}$, there exist $\epsilon>0$ and a sequence of periodic points $(p_n)_n$ satisfying:
 \begin{itemize}
 \item $(p_n)_n$ converges to $p$,
 \item $(W^a_{\epsilon}(p_n))_n$ is relatively compact in the $\mathcal C$-topology.
\end{itemize} 
We showed in \cite{BD14} thanks to Katok's closing Lemma, that the set $\mathcal R_f$ has full measure for every ergodic, hyperbolic probability measure.
\begin{defi} A $\mathcal C$-map $f$ is probabilistically structurally stable  if for every $\mathcal C$-perturbation $f'$ of $f$,  the restriction of $f$ to $\mathcal R_f$ is conjugated to the restriction of $f'$ to its regular set $\mathcal R_{f'}$.
\end{defi} 
It is rather easy to see that probabilistic structural stability implies weak stability:
\begin{defi} A map $f$ is $\mathcal C$-weakly stable  if every $\mathcal C$-perturbation $f'$ of $f$ displays only hyperbolic  periodic points.
\end{defi}

To sum it up, the above definitions are related as follows:
\begin{center}
$\Omega$-Stability ${\Rightarrow}$
Probabilistic  Stability $\Rightarrow $ Weak Stability
\end{center}

\paragraph{The Lambda Lemma Conjecture.} This conjecture states that weak stability implies $\Omega$-stability. For the category of rational functions of the Riemannian sphere, this Lemma has been shown independently by Ma\~n\'e-Sad-Sullivan \cite{MSS} and Lyubich  \cite{Ly84}. 

As the space of rational functions is finite dimensional, a neighborhood of a rational function $f$ can be written as an analytic family $(f_\lambda)_{\lambda\in \D^n}$, with $\D$ the complex disk and $f_0=f$. If $(f_\lambda)_\lambda$ consists of weakly stable maps, then every periodic point $p_0$ of $f_0$ persists to as unique periodic point $p_\lambda$ for $f_\lambda$. Moreover the map $\lambda\mapsto p_\lambda$ is holomorphic. The Lambda lemma asks the following question. Given $p_0$ in closure $J^*_0$ of the set of periodic points of $f_0$, for every  sequence $(p^n_0)_n$ of periodic points converging to $p_0$, does the family $(\lambda \mapsto p^n_\lambda)_n$ converges? If yes, the\emph{ holomorphic motion is said well defined at $p_0$}.
\begin{lemm}[Lambda-Lemma, Ma\~n\'e-Sad-Sullivan \cite{MSS} and Lyubich  \cite{Ly84}]\label{lambda1}
If $(f_\lambda)_\lambda$ is weakly stable, then the holomorphic motion is well defined at every point $p_0\in J^*_0$.
\end{lemm}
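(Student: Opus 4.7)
The plan is to realize the sequence of continuations as a normal family of holomorphic maps $\D^n\to\hat\C$, and then upgrade normality to full convergence by ruling out distinct subsequential limits. I write $P^m(\lambda):=p^m_\lambda$ for the holomorphic continuation of the $m$-th periodic point over the parameter polydisk $\D^n$.

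\emph{Step 1: non-crossing.} First, I would show that for $m\neq m'$ the graphs of $P^m$ and $P^{m'}$ are disjoint in $\D^n\times\hat\C$. Indeed, if $P^m(\lambda_0)=P^{m'}(\lambda_0)=q$, then $q$ would have a well-defined exact period agreeing with those of both continuations, so the two would both refer to period-$k$ points for a common $k$. But then $q$ would be a zero of $z\mapsto f_{\lambda_0}^k(z)-z$ of multiplicity at least $2$, forcing $(f_{\lambda_0}^k)'(q)=1$; so $q$ would be non-hyperbolic, contradicting the weak stability assumption.

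\emph{Steps 2 and 3: normality via Montel and uniqueness via Hurwitz.} Next I would pick three distinct periodic points $a,b,c$ of $f_0$, all different from $p_0$ and from each $p^m_0$ (dropping finitely many indices if necessary), and let $A,B,C:\D^n\to\hat\C$ be their continuations. By Step 1 the triple $(A(\lambda),B(\lambda),C(\lambda))$ remains pairwise distinct on $\D^n$, so there is a holomorphic family of M\"obius transformations $\tau_\lambda$ sending it to $(0,1,\infty)$. The rescaled continuations $\tilde P^m:=\tau_\lambda\circ P^m$ then take values in $\hat\C\setminus\{0,1,\infty\}$, and Montel's theorem makes $(\tilde P^m)_m$ a normal family. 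Now suppose $\tilde P^{m_k}\to\tilde R$ and $\tilde P^{m'_k}\to\tilde S$ are two subsequential limits along disjoint index sets. Each difference $\tilde P^{m_k}-\tilde P^{m'_k}$ is nowhere zero on $\D^n$ by Step 1, so the several-variable Hurwitz theorem (reduced to the one-variable case by slicing with complex lines) forces $\tilde R-\tilde S$ to be either identically zero or nowhere zero on $\D^n$. Since $\tilde R(0)=\tilde S(0)=\tau_0(p_0)$, it must vanish identically, so $\tilde R\equiv\tilde S$. Therefore $(\tilde P^m)_m$, and consequently $(P^m)_m=(\tau_\lambda^{-1}\circ\tilde P^m)_m$, converges uniformly on compacts.

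The main technical subtlety will be Step 1: weak stability must be exploited sharply to forbid any collision of two distinct periodic continuations, for this disjointness is exactly what produces a clean three-punctured target in the normality step and what makes Hurwitz applicable to the differences in the uniqueness step. Once the graphs are known to be disjoint, Montel and Hurwitz together hand over existence and uniqueness of the holomorphic extension essentially for free.
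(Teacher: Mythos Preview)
The paper does not supply its own proof of this lemma; it is quoted as a result from \cite{MSS} and \cite{Ly84} and then used. So there is no in-paper argument to compare against. Your outline is precisely the classical Ma\~n\'e--Sad--Sullivan/Lyubich proof: disjointness of the continued periodic graphs, Montel after a moving three-point normalization, and Hurwitz to force coincidence of subsequential limits.

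One point in Step~1 deserves tightening. You write that if $P^m(\lambda_0)=P^{m'}(\lambda_0)=q$ then ``$q$ would have a well-defined exact period agreeing with those of both continuations''. That phrasing is not quite right: at a collision the exact period of $q$ could in principle be a proper divisor of either $k_m$ or $k_{m'}$. The clean argument is to set $k=\mathrm{lcm}(k_m,k_{m'})$, note that both $P^m$ and $P^{m'}$ solve $f_\lambda^{k}(z)=z$, and observe that if $(f_{\lambda_0}^{k})'(q)\neq 1$ then the implicit function theorem gives a \emph{unique} local branch through $(\lambda_0,q)$, forcing $P^m\equiv P^{m'}$ by analytic continuation --- a contradiction since they differ at $\lambda=0$. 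Hence $(f_{\lambda_0}^{k})'(q)=1$, and if $d$ is the exact period of $q$ then its multiplier satisfies $\mu^{k/d}=1$, so $|\mu|=1$ and $q$ is non-hyperbolic, contradicting weak stability.

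A second small remark: in Step~3 you should also note (via Hurwitz applied to $\tilde P^{m}-\infty$, i.e.\ to the functions themselves avoiding $\infty$) that any subsequential limit $\tilde R$ is either identically $\infty$ or avoids $\infty$; since $\tilde R(0)=\tau_0(p_0)\neq\infty$, the limits are finite-valued and the difference $\tilde R-\tilde S$ is honestly defined. With these two clarifications your proof is complete and is exactly the argument the cited references give.
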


We recall that every rational function $J^*$ is equal to the non-wandering set and that any attracting  periodic point displays a critical point in its basin. Furthermore if a rational function is not weakly stable, it displays a new attracting periodic point after a perturbation of the rational function. Hence the new critical point belongs to the basin of this attracting periodic point.  As the number of critical points is finite, after a finite number of perturbations the rational function turns out to be  weakly stable. This shows that weak stability is open and dense among the rational functions. By the Lambda Lemma \ref{lambda1}, this implies:
\begin{thm}[Ma\~n\'e-Sad-Sulivan  \cite{MSS}, Lyubich \cite{Ly84}]
There is an open and dense subset of rational functions of degree $d\ge2$ which are  $\Omega$-stable.
\end{thm}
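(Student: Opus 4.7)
The plan is to split the statement into two essentially independent parts: first, to show that the set $\mathcal W_d$ of \emph{weakly stable} rational maps of degree $d$ is open and dense in the space $\mathrm{Rat}_d$; second, to use Lambda Lemma \ref{lambda1} as a black box to upgrade weak stability to $\Omega$-stability on $\mathcal W_d$. So $\mathcal W_d$ itself will be both the open dense set of the theorem and the set of $\Omega$-stable maps.

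For openness of weak stability, the argument is almost tautological from the definition: if a neighborhood $U$ of $f$ consists of maps with only hyperbolic periodic points, then $U$ itself witnesses weak stability for every $g\in U$. For density, I would reason via the finiteness of critical orbits. If $f$ is not weakly stable, an arbitrarily close $g$ has a non-hyperbolic periodic point, whose multiplier can then be pushed strictly inside the unit circle by a further small perturbation, producing a new attracting cycle. By a classical theorem of Fatou, every attracting cycle must contain a critical orbit in its immediate basin. Since a degree $d$ rational map has only $2d-2$ critical points, each successful perturbation consumes at least one previously free critical orbit, so the procedure must terminate after at most $2d-2$ steps and yields a weakly stable map arbitrarily close to $f$.

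For the upgrade to $\Omega$-stability, I would fix $f\in\mathcal W_d$ and realize an open neighborhood of $f$ in $\mathcal W_d$ as a finite-dimensional holomorphic family $(f_\lambda)_{\lambda\in\D^n}$ with $f_0=f$. Hyperbolicity of periodic points throughout the family plus the implicit function theorem produce, for every periodic point $p_0$ of $f_0$, a unique holomorphic continuation $\lambda\mapsto p_\lambda$; this is a holomorphic motion of the set of periodic points. Lambda Lemma \ref{lambda1} then extends this motion continuously to a map $h_\lambda:J^*_0\to J^*_\lambda$. By construction $h_\lambda$ intertwines $f_0$ and $f_\lambda$ on the dense subset of periodic points, so by continuity it conjugates $f_0|J^*_0$ with $f_\lambda|J^*_\lambda$. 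Since for rational maps $J^*$ coincides with the non-wandering set $\Omega$, the map $h_\lambda$ is the homeomorphism required by the definition of $\Omega$-stability.

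The main obstacle in the plan is really concentrated inside Lambda Lemma \ref{lambda1}, which the excerpt supplies as a prerequisite. The only step left to execute with care is the density argument: one must organize the bifurcation that turns a non-hyperbolic periodic cycle into a genuine sink so that each new attracting cycle captures a \emph{previously uncaptured} critical orbit, which is what guarantees termination after finitely many perturbations and hence density of $\mathcal W_d$.
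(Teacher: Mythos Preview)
Your proposal is correct and follows essentially the same route as the paper: show weak stability is open and dense via the Fatou critical-point count, then invoke Lambda Lemma~\ref{lambda1} together with $J^*=\Omega$ to promote weak stability to $\Omega$-stability. You supply more detail than the paper's sketch (openness, the explicit bifurcation to a sink, and the care that each new attractor captures a \emph{new} critical orbit), but the architecture is identical.
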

This result enables them to deduce a stronger result: the density of the set of structurally stable rational functions.

We recall that a polynomial automorphism of $\C^2$ is a polynomial mapping of $\C^2$ which is invertible and whose inverse is polynomial. Among polynomial automorphisms of $\C^2$, Dujardin and Lyubich \cite{LD13} showed that the holomorphic motion is well defined on any uniformly hyperbolic compact set. We improved this result:
\begin{lemm}[Berger-Dujardin \cite{BD14}]\label{lambdalemC2}
If $(f_\lambda)_\lambda$ is a weakly stable family of polynomial automorphisms of $\C^2$,  the holomorphic motion is uniquely defined on the regular set $\mathcal R_0$ of $f_0$.
\end{lemm}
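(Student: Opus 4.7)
The plan is to adapt the classical Ma\~n\'e-Sad-Sullivan holomorphic motion argument to the polynomial automorphism setting, exploiting the defining relative compactness condition of the regular set to replace the normal-family arguments that used omitted values in the one-variable case.

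First, I would set up the holomorphic motion of periodic points. Since the family $(f_\lambda)_\lambda$ is weakly stable, every periodic point of $f_\lambda$ is hyperbolic for every $\lambda$. For a periodic point $q$ of $f_0$ of period $N$, the implicit function theorem applied to the holomorphic map $(\lambda,z)\mapsto f_\lambda^N(z)-z$ at $(0,q)$ produces a unique holomorphic map $\lambda\mapsto q_\lambda$ with $q_0=q$, defined on a polydisk. The absence of bifurcations (weak stability, plus the fact that non-degeneracy is preserved along any path in parameter space) lets one continue this motion holomorphically throughout the parameter domain. Standard graph-transform arguments, carried out in a suitable Banach space of holomorphic sections, then show that the local stable and unstable manifolds $W^s_\epsilon(q_\lambda)$, $W^u_\epsilon(q_\lambda)$ also depend holomorphically on $\lambda$ as long as one stays within a uniform hyperbolicity regime.

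Next, given a regular point $p_0\in\mathcal R_0$, pick sequences of periodic points $(p_0^{s,n})_n$ and $(p_0^{u,n})_n$ converging to $p_0$, with $\epsilon>0$ such that $\{W^s_\epsilon(p_0^{s,n})\}_n$ and $\{W^u_\epsilon(p_0^{u,n})\}_n$ are relatively compact in the holomorphic topology. Let $\lambda\mapsto p_\lambda^{s,n}$ and $\lambda\mapsto p_\lambda^{u,n}$ be the holomorphic motions from Step 1. The relative compactness of the local manifolds translates, via standard hyperbolic theory, into uniform estimates on the expansion/contraction rates along the orbit of $p_0^{s,n}$ (resp.\ $p_0^{u,n}$) that are independent of $n$. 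Combined with the fact that these manifolds lie in a fixed compact subset of $\C^2$, this yields a uniform parameter neighborhood $U\ni 0$ on which all the motions $\lambda\mapsto p_\lambda^{s,n}$ take values in a fixed compact set. Montel's theorem then provides normality, and we can extract a subsequential holomorphic limit $\lambda\mapsto p_\lambda^s$; similarly a limit $\lambda\mapsto p_\lambda^u$.

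Finally, I would show that the two limits agree and are independent of the approximating sequences. The relative compactness of local manifolds also lets one pass to the limit $W^s_\epsilon(p_\lambda^s)$ and $W^u_\epsilon(p_\lambda^u)$, obtaining holomorphic families of local stable and unstable discs through the moving point, intersecting transversally at $\lambda=0$. Transversality persists near $\lambda=0$, so the intersection point $W^s_\epsilon(p_\lambda^s)\cap W^u_\epsilon(p_\lambda^u)$ is a single, holomorphically varying point which must coincide both with $p_\lambda^s$ and with $p_\lambda^u$. Uniqueness follows from the same transversality: any candidate holomorphic motion of $p_0$ must lie in this intersection. Running the argument for any pair of approximating sequences gives the same motion, and the construction is independent of subsequence extraction.

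The main obstacle is the normality step: in $\C^2$ one cannot use the classical ``three omitted values'' Montel theorem that underlies the one-dimensional Lambda Lemma. The whole point of introducing the regular set is to bypass this via the relative compactness hypothesis, which feeds directly into a boundedness-based Montel argument. Making this quantitative — keeping the size $\epsilon$ of the local manifolds uniformly bounded below as $n\to\infty$ and as $\lambda$ varies — and then transferring from uniform control of local stable/unstable manifolds back to uniform control of the moving periodic points themselves, is the technical heart of the proof.
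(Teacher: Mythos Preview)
The paper does not give a proof of this lemma; it merely states it with a citation to \cite{BD14} and then moves on to consequences. There is therefore nothing in the present paper to compare your argument against.

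Your outline is in the right spirit: the relative-compactness hypothesis defining $\mathcal R_0$ is precisely what replaces the omitted-values Montel argument from dimension one, and passing to holomorphically moving local stable and unstable discs is the natural mechanism. Two points would need tightening in a full write-up. First, in your final step you claim the intersection point $W^s_\epsilon(p_\lambda^s)\cap W^u_\epsilon(p_\lambda^u)$ coincides with both $p_\lambda^s$ and $p_\lambda^u$; but a priori $p_\lambda^s$ lies only on the first disc and $p_\lambda^u$ only on the second, so their transverse intersection could be a third point distinct from both. Second, the lemma (as the paper formulates ``well defined'') requires convergence for \emph{every} sequence of periodic points approaching $p_0$, whereas your normality argument applies only to the special sequences furnished by the regularity hypothesis; bridging from those to an arbitrary sequence needs an explicit step, typically by showing that any nearby periodic point and its continuation are trapped by the moving transversal pair of limit discs.
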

An immediate consequence of this result is that weak stability implies 
probabilistic stability for the category of polynomial automorphisms of $\C^2$. 

Unfortunately, there is no hope to get the density of $\Omega$-stable polynomial automorphisms of $\C^2$  because  in a non-empty open set \cite{Bu97} of the parameter space is formed by automorphisms displaying a wild horseshoe. However, we will see below that if none pertubations of the dynamics display a homoclinic  tangency, then the dynamics is weakly stable (under a mild hypothesis of dissipativeness). 
 
%


As a corollary of the techniques, we showed that one connected component of the set of weakly stable polynomial automorphisms is formed by  those which satisfy axiom A.   
%

\paragraph{The Ma\~n\'e  Conjecture}
In 1982, Ma\~ne conjectured in \cite{Ma82} that every $C^r$-weakly stable diffeomorphism satisfies axiom A for every $1\le r\le \infty$. He proved this conjecture for $r=1$ and surface diffeomorphisms.  Ma\~ n\'e developed this technology to prove that $C^1$-structurally stable diffeomorphisms satisfy  axiom A and the strong transversality condition in \cite{Ma88}. This work enabled also Palis to prove the same direction for the $C^1$-$\Omega$-stability conjecture \cite{Pa88}.
By developing Ma\~n\'e's works, Aoki and Hayashi proved the Ma\~n\'e conjecture for $r=1$ in any dimension \cite{Ao92,Ha92}.
 
\begin{center}
Weak Stability $\overset{\text{Man\~n\'e\; Conj.}}{\Longrightarrow}$ axiom A. 
\end{center}

After the next section, it will be clear for the reader that the Ma\~n\'e Conjecture implies the Lambda Lemma Conjecture in any category $\mathcal C$.

\paragraph{A Palis Conjecture}
We recall that a hyperbolic periodic point displays a \emph{homocline tangency} if its stable manifold $W^s(p)$ is tangent to its unstable manifold. Two saddle periodic points $p,q$ display a \emph{heterocline tangency} if $W^s(p)$ intersects transversally $W^u(q)$ whereas $W^s(q)$ is tangent to $W^u(p)$ (or vice versa). It is not hard to show that if a $C^r$-map is weakly stable then it cannot display a homoclinic nor a heteroclinic tangency, for every $1\le r\le \infty$. The same is true for one dimensional complex maps. For polynomial automorphisms of $\C^2$, it is a theorem  \cite{Bu97}.

Let us recall also a famous Conjecture of Palis \cite{Pa00} which states that if a dynamics which cannot be perturbed to one which displays a homoclinic nor a heteroclinic tangency, then it satisfies axiom A:
\begin{center}
Weak Stability $\Rightarrow $ Far from tangencies $\overset{\text{Palis\; Conj.}}{\Longrightarrow}$ axiom A. 
\end{center}
In the category of $C^1$-surface diffeomorphisms, this conjecture has been proved by Pujals-Sambarino \cite{PS00}. In the category of $C^1$-diffeomorphisms of higher dimensional manifolds, a weaker version has been proved by Crovisier-Pujals \cite{CP15}.
 
We notice that the Palis conjecture implies the Ma\~n\'e conjecture and so the Lambda lemma conjecture.
 
 \paragraph{A description of structurally stable dynamics as those far from tangencies?}
This question is widely open in the $C^r$-category for $r>1$ (for $C^1$-surface diffeomorphisms it is a consequence of Ma\~n\'e's theorem). It is also correct for the category of rational functions. 
This might be correct for polynomial automorphisms of $\C^2$.  Indeed, most of the work of Dujardin-Lyubich was dedicated to prove the following result:
\begin{thm}[Dujardin-Lyubich \cite{LD13}]
Given a polynomial automorphism $f$ of (dynamical) degree $d\ge 2$ and so that $ |det\, Df_0|\cdot d^2<1$, either $f$ is weakly stable, either a perturbation of $f'$ admits a homoclinic tangency. 
 \end{thm}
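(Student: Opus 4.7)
I argue the contrapositive. Assume that no perturbation of $f$ within the space of polynomial automorphisms of $\mathbb{C}^2$ of degree $d$ admits a homoclinic tangency; I want to conclude that $f$ is weakly stable. Embed $f=f_0$ in a small holomorphic family $(f_\lambda)_{\lambda\in\mathbb{D}^N}$ covering a neighborhood of $f$, and suppose for contradiction that some $f_{\lambda_*}$ in this family has a non-hyperbolic periodic point $p_*$. I want to produce a homoclinic tangency at a nearby parameter, contradicting the assumption.

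The first step is to pass from the degenerate bifurcation at $p_*$ to the creation of a genuine saddle. By a generic perturbation within the family, one may assume the bifurcation at $\lambda_*$ is of saddle-node or period-doubling type, so that a saddle periodic point $q$ emerges on one side of $\lambda_*$. The key role of the hypothesis $|\det Df_0|\cdot d^2<1$ is then to control the geometry of stable manifolds of saddles quantitatively: for a period-$n$ saddle $q$ with eigenvalues $\lambda_q,\mu_q$ one has $|\lambda_q\mu_q|=|\det Df_0|^n$, while the Böttcher coordinate at infinity and polynomial growth of iterates give $|\lambda_q|\leq C\,d^n$; hence $|\mu_q|\geq c\,(|\det Df_0|/d)^n$, and the strong dissipativeness $|\det Df_0|\cdot d^2<1$ converts this into a uniform lower bound on the conformal radii of local stable manifolds $W^s_{\loc}(q_\lambda)$, with holomorphic dependence on $\lambda$.

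The second step exploits pluripotential theory of Bedford-Lyubich-Smillie: the stable manifolds of saddles equidistribute along the positive current $T^+$, while unstable manifolds equidistribute along $T^-$. Near the bifurcation parameter $\lambda_*$, the non-hyperbolic dynamics at $p_*$ forces the local stable and unstable laminations to develop a quadratic contact: this is the geometric manifestation of the failure of the holomorphic motion of $J^*$ provided by Lemma \ref{lambdalemC2}. Combined with the quantitative bounds of the first step, an appropriate transversal perturbation in the $N$-dimensional parameter space realizes this contact as an honest homoclinic tangency of the nearby saddle $q$, yielding the desired contradiction.

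The main obstacle is the second step. Pluripotential estimates generally locate the laminations only in the sense of currents, whereas a tangency is a pointwise $C^1$ statement about two specific leaves. The hypothesis $|\det Df_0|\cdot d^2<1$ is exactly what is needed to upgrade currential statements into $C^1$-geometric statements, via uniform laminarity of $T^\pm$ over regions of definite size. Without this quantitative control, the bifurcation at $p_*$ could in principle be absorbed into the dynamics without producing a tangency (e.g.\ a purely parabolic fixed point isolated from the lamination), so pinning down the precise way dissipativeness turns equidistribution into pointwise tangencies will be the most delicate part of the argument.
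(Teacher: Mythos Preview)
The paper does not prove this theorem; it is quoted from Dujardin--Lyubich \cite{LD13} and used as a black box, so there is no ``paper's own proof'' to compare against. I will therefore comment on your sketch against the actual argument in \cite{LD13}.

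Your overall architecture (assume a non-hyperbolic periodic point appears, produce a tangency nearby) is correct, but the two substantive steps are not.

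\medskip

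\textbf{Step 1 does not yield what you claim.} From $|\lambda_q\mu_q|=|\det Df_0|^n$ and $|\lambda_q|\le C\,d^n$ you get $|\mu_q|\ge c\,(|\det Df_0|/d)^n$, which tends to $0$ since $|\det Df_0|/d<1/d^3$. This is not a uniform lower bound on anything, and in particular does not control conformal radii of local stable manifolds uniformly in the period. In \cite{LD13} the condition $|\det Df_0|\cdot d^2<1$ is used in the opposite direction: it forces the stable multiplier to be \emph{small} relative to $1/d^n$, i.e.\ $|\mu_q|<d^{-n}$. The point is that a stable manifold which is everywhere a horizontal graph (in the filtration coordinates coming from the escape function) can be contracted by $f^n$ at rate at most $d^{-n}$; the inequality $|\mu_q|<d^{-n}$ then rules this out and forces $W^s(q)$ to fold, i.e.\ to have \emph{critical points} with respect to the horizontal projection. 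These critical points are the mechanism that eventually produces tangencies; your uniform-radius heuristic plays no role.

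\medskip

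\textbf{Step 2 is circular and misses the main tool.} You invoke Lemma~\ref{lambdalemC2} to say that the holomorphic motion of $J^*$ fails, but that lemma asserts that the motion is well defined \emph{when} the family is weakly stable; its contrapositive goes the wrong way for your purposes. More importantly, the actual engine in \cite{LD13} is absent from your sketch: the semi-parabolic implosion theory (after Bedford--Smillie--Ueda). One first arranges the non-hyperbolic point to be semi-parabolic (one neutral and one contracting eigenvalue, generic in the dissipative regime), and then studies the transition (Lavaurs-type) map between the attracting and repelling petals. The critical points of $W^s$ produced in Step~1 are shown to accumulate on $J^+$, and under the semi-parabolic perturbation they are carried into genuine tangencies between stable and unstable manifolds of saddles homoclinically related to the bifurcating orbit. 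The equidistribution of stable/unstable manifolds along $T^\pm$ is indeed in the background, but the passage ``currential contact $\Rightarrow$ $C^1$ tangency'' is not achieved by a laminarity upgrade as you suggest; it goes through this explicit critical-point and implosion analysis.

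\medskip

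In short: the dissipativity hypothesis is not a size estimate on stable disks but an entropy-versus-contraction inequality forcing folds of $W^s$, and the bifurcation-to-tangency step requires the semi-parabolic machinery rather than a soft holomorphic-motion argument.
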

From Lambda Lemma \ref{lambdalemC2} we deduced:
\begin{cor}[Berger-Dujardin \cite{LD13}]
Given a polynomial automorphism $f$ of (dynamical) degree $d\ge 2$ and so that $ |det\, Df_0|\cdot d^2<1$, either $f$ is probabilistically stable, either a perturbation of $f'$ displays a homoclinic tangency. 
 \end{cor}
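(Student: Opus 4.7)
The plan is to obtain the corollary by chaining the Dujardin--Lyubich theorem above with the Berger--Dujardin Lambda Lemma (Lemma \ref{lambdalemC2}) on $\C^2$. Since the finite-dimensional parameter space of polynomial automorphisms of a fixed degree allows us to realize any neighborhood of $f$ as a holomorphic family $(f_\lambda)_{\lambda \in \D^N}$ with $f_0=f$, both cited statements apply in this local-family setting. I would proceed in essentially three steps.

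First, I would invoke Dujardin--Lyubich directly: under the dissipativity hypothesis $|\det Df_0|\cdot d^2 <1$, either some arbitrarily small perturbation $f'$ of $f$ admits a homoclinic tangency---in which case the second alternative of the corollary already holds and there is nothing more to prove---or the whole family $(f_\lambda)_\lambda$ consists of weakly stable maps in the sense of the Definition above.

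Second, assuming we are in the weakly stable case, I would apply Lemma \ref{lambdalemC2}: the holomorphic motion is uniquely defined on the regular set $\mathcal R_0$ of $f_0$. Equivariance of this motion on the dense subset of hyperbolic periodic points (which extends to $\mathcal R_0$ by continuity of the motion) yields for each $\lambda$ a map $h_\lambda : \mathcal R_0 \to \C^2$ satisfying $h_\lambda \circ f_0 = f_\lambda \circ h_\lambda$, and standard properties of holomorphic motions (the $\lambda$-Lemma of Ma\~n\'e--Sad--Sullivan applied slice by slice) guarantee that each $h_\lambda$ is an injective continuous map, so that $h_\lambda$ is a homeomorphism onto its image.

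Third, and this is where the main technical subtlety lies, one must check that $h_\lambda(\mathcal R_0) = \mathcal R_{f_\lambda}$, so that $h_\lambda$ actually realizes the conjugacy between $f_0|\mathcal R_{f_0}$ and $f_\lambda|\mathcal R_{f_\lambda}$ required by the definition of probabilistic stability. This is the main obstacle in the argument: regularity is defined through the relative compactness in the $\mathcal H$-topology of local stable and unstable manifolds at hyperbolic periodic points, and one must show that this property is transported by the holomorphic motion. The symmetric role of $f_0$ and $f_\lambda$ in the weakly stable family, combined with the fact that holomorphic motions preserve periodicity and that the family of local stable/unstable manifolds varies holomorphically, should give the inclusion $h_\lambda(\mathcal R_0)\subseteq \mathcal R_{f_\lambda}$; applying the same argument to $f_\lambda$ and $h_\lambda^{-1}$ yields equality. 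Combining these three steps delivers probabilistic stability, completing the proof.
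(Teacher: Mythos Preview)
Your proposal is correct and follows exactly the approach the paper takes: the paper simply records that the corollary is deduced from the Dujardin--Lyubich theorem together with Lemma~\ref{lambdalemC2} (weak stability $\Rightarrow$ probabilistic stability via the holomorphic motion on $\mathcal R_0$), without giving further detail. Your three steps are precisely an unpacking of that deduction; in particular, the paper also treats the fact that the motion yields a conjugacy onto $\mathcal R_{f_\lambda}$ as part of the ``immediate consequence'' of Lemma~\ref{lambdalemC2}, so your third step is a reasonable elaboration of what the paper leaves implicit.
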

 
Let us stress that this direction might be interesting since numerically we can  see some local stable and unstable manifolds and observe if they display tangencies. 

%
%
%

\begin{figure}[h!]
	\centering
		\includegraphics[width=11cm]{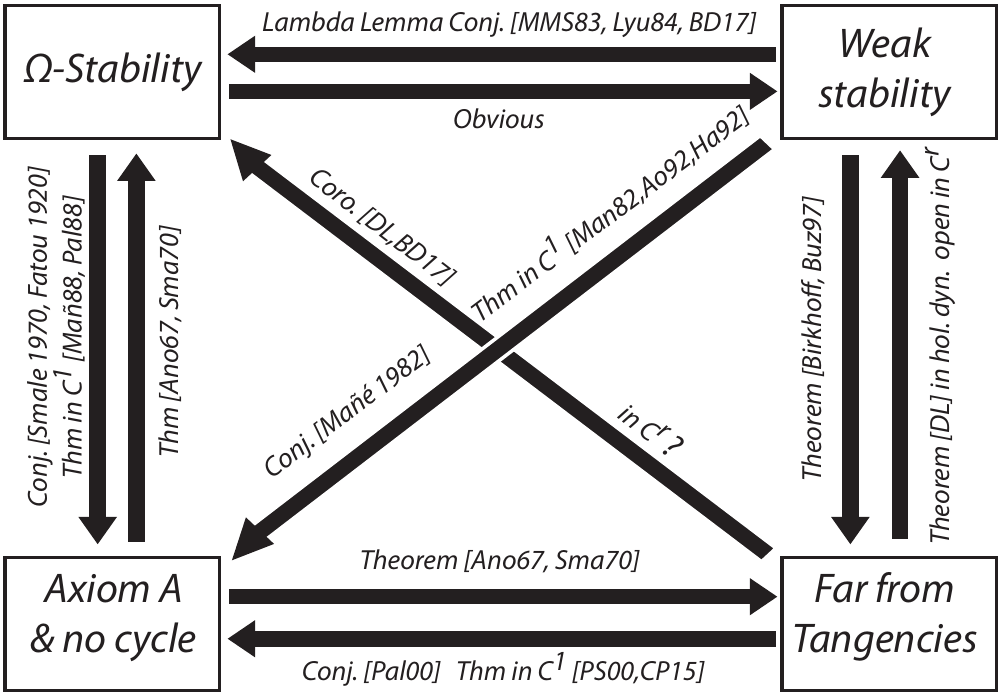}
	\caption{Summary of some Theorems and Conjectures on Structural Stability}
\end{figure}

\section{Hyperbolicity implies structural stability}\label{hypimpliesstab}

In the following subsection, we recall the proof ideas of several basic theorems showing the structural stability of subsets from hyperbolic hypotheses. 

\subsection{$\Omega$-stability of maps satisfying axiom A and the no-cycle condition}
\label{hypimpliesstab1}
First let us recall a generalization of the notion of structural stability for invariant subsets. 
\begin{defi}[Structurally stable subset] A compact set $\Lambda$ left invariant by a differentiable map $f$ of a manifold $M$ is \emph{structurally stable} if for every $C^r$-perturbation $f'$ of $f$, there exists a continuous injection $i\colon \Lambda \to M$ so that 
$f'\circ i = i\circ f$.
\end{defi}
We notice that $M$ is structurally stable if and only if $f$ is structurally stable. 
\begin{thm}[Anosov \cite{An67}, proof by Moser \cite{Mo69}]   \label{anosovthm}
A  uniformly hyperbolic compact set $\Lambda$ for a $C^1$-diffeomorphisms is structurally stable.
\end{thm}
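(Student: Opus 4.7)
The plan is to find the continuous injection $i \colon \Lambda \to M$ as a small perturbation of the inclusion $\iota \colon \Lambda \hookrightarrow M$, by solving the conjugacy equation $f' \circ i = i \circ f$ via a fixed point argument in a suitable Banach space. More precisely, endow $M$ with a Riemannian metric and exponential map $\exp$, and look for $i$ of the form $i(x) = \exp_x \sigma(x)$, where $\sigma$ is a continuous section of $TM|_\Lambda$ of small sup norm. The conjugacy equation then becomes
\[
\sigma(fx) \;=\; \exp_{fx}^{-1}\bigl(f'(\exp_x \sigma(x))\bigr) \qquad \text{for all } x\in \Lambda.
\]
Denoting the right-hand side by $\Phi_{f'}(\sigma)(x)$, the goal is to show that for $f'$ close enough to $f$ in the $C^1$-topology, the operator $\sigma \mapsto \Phi_{f'}(\sigma) \circ f^{-1}$ has a unique fixed point near $0$ in $\Gamma^0(TM|_\Lambda)$ equipped with the sup norm.

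The first step is to study the linearization at $\sigma=0$ when $f'=f$: a direct computation gives the bounded linear operator
\[
L \colon \sigma \mapsto \sigma \circ f \;-\; Tf \cdot \sigma,
\]
acting on the Banach space $E = \Gamma^0(TM|_\Lambda)$. The key step is to prove that $L$ is invertible, and this is exactly where uniform hyperbolicity enters. Decompose $\sigma = \sigma^s + \sigma^u$ using the continuous splitting $TM|_\Lambda = E^s \oplus E^u$. On $E^s$, the operator $L$ reads $\sigma^s \mapsto \sigma^s \circ f - Tf \cdot \sigma^s$, and since $Tf$ is uniformly contracting on $E^s$, the operator $\sigma^s \mapsto Tf \cdot \sigma^s$ has norm $<1$ as an operator on sections, so $L|_{E^s}$ is invertible by Neumann series. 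Symmetrically, on $E^u$ one rewrites the equation via $Tf^{-1}$, which is contracting on $E^u$, and obtains invertibility. Combining, $L$ is a bounded invertible operator on $E$, with inverse bounded in terms of the hyperbolicity constants $C,\lambda$.

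The second step is to apply the implicit function theorem (or equivalently a Banach fixed point theorem after rewriting). The map $(f', \sigma) \mapsto \Phi_{f'}(\sigma) \circ f^{-1} - \sigma$ is a $C^0$ map (actually $C^1$ in $\sigma$ uniformly in $f'$), vanishes at $(f,0)$, and its derivative with respect to $\sigma$ at this base point is precisely (a conjugate of) $L$, hence invertible. Therefore for every $C^1$-perturbation $f'$ of $f$ in a suitable neighborhood, there exists a unique continuous section $\sigma = \sigma_{f'}$ of small sup norm solving the conjugacy equation, and $\sigma_{f'} \to 0$ as $f' \to f$.

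The last step is to check that $i := \exp \sigma_{f'}$ is actually injective. The obstacle here, and I think the most delicate point of the proof, is that the fixed-point argument only produces a semi-conjugacy; one must separately show that $i$ is one-to-one. This is done by a standard shadowing / expansivity argument: if $i(x) = i(y)$ for distinct $x,y \in \Lambda$, then the forward and backward orbits of $x$ and $y$ would stay uniformly close (since $f'^n \circ i = i \circ f^n$ and $i$ is uniformly close to $\iota$), contradicting the expansivity of $f|_\Lambda$ inherited from hyperbolicity. Together with continuity of $i$, this yields the continuous injection conjugating $f|_\Lambda$ with $f'|_{i(\Lambda)}$, proving structural stability of $\Lambda$.
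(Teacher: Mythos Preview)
Your proposal is correct and follows essentially the same approach as the paper: set up the semi-conjugacy equation in the Banach space $\Gamma^0(TM|_\Lambda)$, show invertibility of the linearized operator $\sigma\mapsto \sigma - Df\circ\sigma\circ f^{-1}$ (equivalently your $L$, up to composition with $f^{-1}$) by splitting into $E^s$- and $E^u$-valued sections and summing Neumann series, apply the implicit function theorem, and then upgrade the semi-conjugacy to an injection via expansiveness of $f|_\Lambda$. The paper works a bit more abstractly with the Banach manifold $C^0(\Lambda,M)$ rather than immediately passing to sections through $\exp$, but this is purely cosmetic.
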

\begin{proof} 
We want to solve the following equation:
\begin{equation}\tag{$\star$} f'\circ h\circ f^{-1} =  h \; .
\end{equation}
for $f'$ $C^1$-close to $f$ and $h$ $C^0$-close to the canonical inclusion $i\colon \Lambda\hookrightarrow M$. We shall use the implicit function theorem with the map:
$$\Phi \colon (h, f')\in C^0(\Lambda,M)\times C^1(M,M) \to 
f'\circ h\circ f^{-1}  \in C^0(\Lambda,M)\; .$$
We notice that $\Phi$ is a $C^1$-differentiable map of Banachic manifolds. Moreover it satisfies $\Phi(i,f)=i$.  
Hence to apply the implicit function theorem it suffices to prove that $id- \partial_h  \Phi(i,f)$ is an isomorphism.

Note that  the tangent space of the Banachic manifold $C^0(\Lambda,M)$ at the canonical inclusion $i$ is the following Banach space:
\[\Gamma := \{ \gamma \in C^0(\Lambda, TM): \forall x \in \Lambda\quad \gamma (x) \in T_{x} M\}.\]
The partial derivative of $\partial_h \Phi$ at $(i,f)$ is: 
$$\Psi:=\partial_h \Phi (i, f)\colon \sigma \in \Gamma\mapsto 
Df \circ  \sigma\circ f^{-1} \in \Gamma\; .$$
To compute the inverse of $id-\Psi$, we split $\Gamma $ into two $\Psi$-invariant subspaces $\Gamma = \Gamma^{u}\oplus\Gamma^{s}$, with:
\[\Gamma^{u}:= \{ \gamma \in C^0(\Lambda, TM): \forall x \in \Lambda\quad \gamma (x) \in E^{u}_x\}\quad \text{and}\quad \Gamma^{s}:= \{ \gamma \in C^0(\Lambda, TM): \forall x \in \Lambda\quad \gamma (x) \in E^{s}_x\}.\]

As the norm of $\Psi|\Gamma^{s}$ is less than $1$,  the map $(id-\Psi)|\Gamma^{s}$ is invertible with inverse equal to  $$\sum_{n\ge 0} (\Psi|\Gamma^{s})^n\; .$$ 

As $\Psi|\Gamma^{u}$ is invertible with contracting inverse, the map $(id-\Psi)|\Gamma^{u}$ is invertible with inverse: 
$$ -(\Psi|\Gamma^{u})\circ  (id-(\Psi|\Gamma^{u})^{-1})= - (\sum_{n\ge 1} (\Psi|\Gamma^{u})^{-n})\; .$$

Hence by the implicit function theorem, for every $f'$ $C^1$-close to $f$, there exists a continuous map $h$ $C^0$-close to $i$ which semi-conjugates the dynamics:
$$ f'\circ h = h \circ f \; .$$

As $i$ is injective and close to $h$, if $h(x)= h(y)$ then $x$ and $y$ are close. Also by semi-conjugacy, $h \circ f^n(x)= h\circ f^n(y)$ for every $n\in \Z$. Hence $f^n(x)$ is close to $f^n(y)$ for every $n$. 
By expansiveness (see below), we conclude that $x=y$ and so that $h$ is injective.
\end{proof}
\begin{lemm}[Expansiveness]\label{expansiveness}
Every hyperbolic compact set $\Lambda$ for a diffeomorphism is \emph{expansive}: there exists $\epsilon>0$ so that if two orbits $(x_n)_{n\in \Z}$ and $(y_n)_{n\in \Z}$ are uniformly $\epsilon$-close, then $x_0=y_0$.
\end{lemm}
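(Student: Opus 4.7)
The plan is to apply the stable/unstable manifold proposition just stated. Choose $\epsilon>0$ small enough that, for every $z\in\Lambda$, the local manifolds $W^s_\epsilon(z)$ and $W^u_\epsilon(z)$ are embedded disks tangent at $z$ to $E^s(z)$ and $E^u(z)$; by compactness of $\Lambda$, continuity of the splitting $E^s\oplus E^u=T_zM$, and transversality at $z$, we may further shrink $\epsilon$ uniformly so that $W^s_\epsilon(z)\cap W^u_\epsilon(z)=\{z\}$ for every $z\in\Lambda$. The core ingredient is the following local characterization, which should be extracted alongside the preceding proposition: if $x\in\Lambda$ and $y\in M$ satisfy $d(f^n(x),f^n(y))\le\epsilon$ for all $n\ge 0$, then $y\in W^s_\epsilon(x)$, and symmetrically for $W^u_\epsilon(x)$ with $n\le 0$.

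Granting this characterization, consider two orbits $(x_n)$ and $(y_n)$ with $x_0\in\Lambda$ and $d(x_n,y_n)\le\epsilon$ for every $n\in\Z$. The forward bound yields $y_0\in W^s_\epsilon(x_0)$, and the backward bound, applied to $f^{-1}$ (whose hyperbolic splitting is $E^u\oplus E^s$), yields $y_0\in W^u_\epsilon(x_0)$. The transversality arranged above then forces $y_0=x_0$, proving expansiveness.

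To establish the characterization I would extend the splitting $E^s\oplus E^u$ continuously to a neighborhood of $\Lambda$ and introduce small $Df$-invariant stable and unstable cone fields $C^s,C^u$ along the forward orbit of $x$. Writing $v_n=\exp_{x_n}^{-1}(y_n)$, a nonzero component of $v_0$ inside the unstable cone $C^u(x_0)$ would, by cone invariance combined with Taylor estimates on $f$, push some iterate $v_n$ into a tight unstable cone with exponentially growing norm, contradicting $|v_n|\le\epsilon$. Hence $v_0\in C^s(x_0)$, and a standard graph-transform argument then identifies the set of such $y$ with the local stable disk $W^s_\epsilon(x)$. The main obstacle is quantifying this cone estimate: one must calibrate $\epsilon$ against the hyperbolic constants $\lambda,C$ and the modulus of continuity of $Df$ so that the nonlinear remainder in $v_{n+1}=Df_{x_n}(v_n)+o(|v_n|)$ cannot overcome the hyperbolic expansion along an entire one-sided orbit, uniformly in $x\in\Lambda$.
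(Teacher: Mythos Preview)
Your argument is correct and close in spirit to the paper's, but the packaging differs. The paper does not invoke the dynamical characterization of $W^s_\epsilon$ and $W^u_\epsilon$ as a black box; instead it uses the local product structure directly: for each $n$ it sets $z_n$ equal to the unique intersection point of $W^s_{2\epsilon}(x_n)$ with $W^u_{2\epsilon}(y_n)$, observes that $(z_n)_n$ is itself an orbit, and then argues that expansion along $W^u_{2\epsilon}(y_n)$ forces $z_n=y_n$ while expansion of $f^{-1}$ along $W^s_{2\epsilon}(x_n)$ forces $z_n=x_n$, whence $x_0=y_0$. Your route bypasses the auxiliary orbit $z_n$ by appealing to the one-sided characterization $\{y:\sup_{n\ge0}d(f^n(x),f^n(y))\le\epsilon\}=W^s_\epsilon(x)$ (and its unstable analogue), which you then justify via cone fields. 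The trade-off: the paper's version is self-contained once the mere existence of the local manifolds is granted, whereas yours is cleaner but imports a stronger form of the stable manifold theorem; on the other hand, your version needs only $x_0\in\Lambda$, while the paper's intermediate point $z_n$ requires local unstable manifolds through the $y_n$ as well.
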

\begin{proof}
First we notice that for $\epsilon$ small enough, given two such orbits,  $W^s_{2\epsilon} (y_n)$ intersects $W^u_{2\epsilon} (y_n)$  at a unique point $z_n$. We observe that $(z_n)_n$ is an orbit. As $f$ is expanding along $W^u_{2\epsilon} (y_n)$ for every $n$ and since $z_n\in W^u_{2\epsilon} (y_n)$, it comes that $z_n=y_n$ for every $n\ge 0$. Using the same argument for $f^{-1}$, it comes that $z_n=x_n$  for every $n\le 0$ and so $x_0=y_0$.
\end{proof}

The image $\Lambda(f'):= h(\Lambda)$ is called the \emph{hyperbolic continuation} of $\Lambda$. Since the density of periodic points is preserved by conjugacy, it comes: 
\begin{cor}
If $\Lambda$ is a basic piece, then its hyperbolic continuation is also a basic piece.
\end{cor}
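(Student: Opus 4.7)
The plan is to verify directly that $\Lambda(f') := h(\Lambda)$ satisfies all the defining properties of a basic piece: compactness, $f'$-invariance, uniform hyperbolicity, transitivity, and local maximality. Compactness is immediate because $h$ is continuous and $\Lambda$ is compact. For $f'$-invariance we use the conjugacy relation from Theorem~\ref{anosovthm}:
\[
f'(\Lambda(f')) = f'(h(\Lambda)) = h(f(\Lambda)) = h(\Lambda) = \Lambda(f').
\]
Transitivity is transported by conjugacy: since $h$ is a homeomorphism onto its image that intertwines $f|\Lambda$ and $f'|h(\Lambda)$, any dense $f$-orbit in $\Lambda$ maps to a dense $f'$-orbit in $\Lambda(f')$.

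Uniform hyperbolicity of $\Lambda(f')$ is the one step that does not follow formally from the conjugacy but rather from the standard persistence of hyperbolic structures: the splitting $E^s \oplus E^u$ on $\Lambda$ extends to invariant cone fields on a neighborhood $N$ of $\Lambda$ in $M$, and these cone fields remain strictly invariant and contracting/expanding for any $f'$ sufficiently $C^1$-close to $f$. Since $h$ is $C^0$-close to the inclusion, $\Lambda(f') \subset N$, and the cone-field criterion produces a hyperbolic $Df'$-invariant splitting on $\Lambda(f')$.

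For local maximality I would invoke the equivalence recalled in the Remark after the definition of basic set: a uniformly hyperbolic, transitive, compact set is locally maximal if and only if its periodic points are dense. Periodic points are preserved by the conjugacy: if $f^n(x) = x$ with $x \in \Lambda$, then $(f')^n(h(x)) = h(f^n(x)) = h(x)$, so $h(\mathrm{Per}(f|\Lambda)) \subset \mathrm{Per}(f'|\Lambda(f'))$. Since $\Lambda$ is basic, $\mathrm{Per}(f|\Lambda)$ is dense in $\Lambda$, and by continuity of $h$ its image is dense in $h(\Lambda) = \Lambda(f')$. Combined with the hyperbolicity and transitivity established above, this yields local maximality, concluding that $\Lambda(f')$ is a basic piece.

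The only genuine obstacle is the hyperbolicity of $\Lambda(f')$ for $f'$ (rather than for $f$ pulled back through $h$), which is why one needs the cone-field persistence argument; every other property is a mechanical consequence of the conjugacy and the characterization of local maximality via density of periodic points.
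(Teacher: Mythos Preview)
Your proof is correct and follows the same route as the paper: the key step is that density of periodic points is preserved by the conjugacy $h$, which via the characterization in the Remark yields local maximality of $\Lambda(f')$. You are simply more explicit than the paper about the auxiliary properties (compactness, invariance, transitivity, and the cone-field persistence of hyperbolicity) that the paper leaves implicit.
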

A similar result has been proved by Shub during his thesis:
\begin{thm}[Shub \cite{Shub69}]
An expanding compact set $\Lambda$ for an endomorphisms $f$ is $C^1$-structurally stable.
\end{thm}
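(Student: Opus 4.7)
The strategy closely parallels the proof of Anosov's theorem just given: for $f'$ sufficiently $C^1$-close to $f$, I want to construct a continuous injection $h\colon \Lambda \to M$ close to the canonical inclusion $i$ and satisfying $f'\circ h = h\circ f$. The key difficulty is that $f$ need not be invertible on $\Lambda$, so the linear operator $\Psi\colon \sigma\mapsto Df\circ \sigma \circ f^{-1}$ from Anosov's proof is no longer available. Instead I exploit the fact that, in the absence of a stable direction, the dynamics is uniformly contracting along all local inverse branches. After replacing $f$ by a sufficiently high iterate (and using an adapted metric), I may assume that $D_xf$ is invertible with $\|(D_xf)^{-1}\|\leq\lambda<1$ for every $x\in\Lambda$, and that $f$ restricts to a diffeomorphism from $B_\epsilon(x)$ onto a set containing $B_{\epsilon/\lambda}(f(x))$, uniformly in $x\in\Lambda$. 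For $f'$ close enough to $f$ in $C^1$, the same properties hold with slightly weaker constants, providing uniform local inverse branches $(f')^{-1}_x$ that are $\lambda'$-contractions with $\lambda'<1$.

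Now fix a $C^0$-neighborhood $V$ of $i$ consisting of maps $h$ with $\sup_{x\in\Lambda} d(h(x),x)<\eta$ for a small $\eta>0$, and define
\[T_{f'}\colon V\to C^0(\Lambda,M),\qquad T_{f'}(h)(x):=(f')^{-1}_x\bigl(h(f(x))\bigr);\]
the right-hand side is legitimate because $h(f(x))$ lies close to $f(x)$, hence in the domain of $(f')^{-1}_x$. One checks that $T_{f'}(V)\subset V$ and that
\[\|T_{f'}(h_1)-T_{f'}(h_2)\|_\infty\leq \lambda'\,\|h_1-h_2\|_\infty,\]
so Banach's fixed point theorem yields a unique $h=h_{f'}\in V$ with $T_{f'}(h)=h$, depending continuously on $f'$ and reducing to $i$ at $f'=f$. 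Unwinding the definition of $T_{f'}$ gives exactly $f'\circ h = h\circ f$.

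Injectivity of $h$ follows from a forward-expansivity property of expanded sets: there exists $\delta>0$ such that any two forward orbits remaining uniformly $\delta$-close must coincide. Indeed, the uniform $\lambda$-contraction of local inverse branches of $f$ gives $d(x,y)\leq \lambda^n\,d(f^n(x),f^n(y))$ as long as all of these distances stay below $\delta$, forcing $x=y$ upon letting $n\to\infty$. Thus if $h(x)=h(y)$, the semiconjugacy yields $h(f^n(x))=h(f^n(y))$ for every $n\geq 0$, and since $h$ is $C^0$-close to $i$ this implies $d(f^n(x),f^n(y))<2\eta<\delta$ for all $n$, hence $x=y$. The main obstacle is the non-invertibility of $f$ on $\Lambda$, which forces replacement of the implicit function argument of Anosov's proof by a contraction mapping argument on the already-contracting operator $T_{f'}$; once this functional setup is in place, the remainder of the proof follows the Anosov template.
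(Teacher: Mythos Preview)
Your argument is correct and follows essentially the same route as the paper: both define the operator $h\mapsto (f'|B(x,\epsilon))^{-1}\circ h\circ f$ using local inverse branches of $f'$, obtain the semi-conjugacy as its unique fixed point (the paper phrases this via the implicit function theorem with contracting partial derivative, you via Banach's contraction principle directly---equivalent here), and then deduce injectivity from forward expansiveness. The only cosmetic difference is that the paper invokes the implicit function theorem while you apply the contraction mapping theorem explicitly; the underlying operator and the expansiveness argument are identical.
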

\begin{proof} First let us notice that $f$ is a local diffeomorphism at a neighborhood of the compact set $\Lambda$. Hence there exists $\epsilon>0$ so that for every $f'$ $C^1$-close to $f$, for every $x\in \Lambda$, the restriction $f'|B(x,\epsilon)$ is invertible. This enables us to look for a semi-conjugacy thanks to the map:
$$\Phi \colon (h, f')\in C^0(\Lambda,M)\times C^1(M,M) \to 
(f'|B(x,\epsilon))^{-1}\circ h\circ f  \in C^0(\Lambda,M)$$
The latter is well defined and of class $C^1$ on the $\epsilon$-neighborhood of the pair of the canonical inclusion $i\colon \Lambda\hookrightarrow M$ with $f$. Furthermore, it holds $\Phi(i,f)= i$ and the following partial derivative is contracting, with $\Gamma$ the tangent space of $C^0(\Lambda,M)$ at $i$.
 $$\partial_h \Phi (i, f) \colon  \sigma \in \Gamma \to 
Df^{-1}\circ \sigma \circ f  \in \Gamma\; .$$
Thus, by the implicit function Theorem, for $f'$ $C^1$-close to $f$, 
there exists a unique solution with $h\in C^0(\Lambda,M)$ close to $i$ for the semi-conjugacy equation:
\[\Phi(h,f')=h\Leftrightarrow h\circ f= f'\circ h\;.\]

As $h$ is close to the canonical inclusion, if $h(x)=h(x')$ then $x$ and $x'$ must be close.  Also by semi-conjugacy, it holds $h(f^n(x))=h(f^n(x'))$ for every $n\ge 0$. Thus the orbits $(f^n(x))_{n\ge 0}$ 
and $(f^n(x'))_{n\ge 0}$ are uniformly close. By forward expansiveness  (see below), it comes that $x=x'$.
\end{proof}
One easily shows by a similar argument to Lemma \ref{expansiveness}:
\begin{lemm}[Forward expansiveness]
Every expanding compact set $\Lambda$ is \emph{forward expansive}:
there exists $\epsilon>0$ so that if two orbits $(x_n)_{n\ge 0}$ and $(y_n)_{n\ge 0}$ are uniformly $\epsilon$-close, then $x_0=y_0$.
\end{lemm}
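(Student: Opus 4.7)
The plan is to adapt the argument of Lemma \ref{expansiveness}, but in simpler form: since every tangent direction is expanded (there is no stable summand to worry about), it suffices to iterate forward and derive a contradiction from pure expansion. The key ingredient is to upgrade the infinitesimal expansion on $\Lambda$ to a uniform metric expansion on pairs of nearby points.

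First I would invoke the expanding hypothesis to fix an iterate $n \geq 1$ such that, by compactness of $\Lambda$, there exists a uniform constant $\mu > 1$ with $\|D_x f^n \cdot v\| \geq \mu \|v\|$ for every $x \in \Lambda$ and every $v \in T_x M$. Next I would use continuity of $Df^n$ together with a finite-cover / Lebesgue-number argument in local charts to produce $\epsilon_0 > 0$ and $\mu' \in (1,\mu)$ such that, whenever $x, y \in \Lambda$ satisfy $d(x,y) < \epsilon_0$, one has
\[d(f^n(x), f^n(y)) \;\geq\; \mu' \cdot d(x,y).\]

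Setting $\epsilon := \epsilon_0$, I would then assume that two forward orbits $(x_k)_{k \geq 0}$ and $(y_k)_{k \geq 0}$ in $\Lambda$ satisfy $d(x_k, y_k) < \epsilon$ for every $k$, and argue by contradiction that $x_0 \neq y_0$. Writing $d_k := d(x_{kn}, y_{kn})$ and applying the metric expansion inductively (legitimate because each $d_k$ is less than $\epsilon = \epsilon_0$), one obtains $d_{k+1} \geq \mu' \cdot d_k$, hence $d_k \geq (\mu')^k d_0$. Since $d_0 > 0$ and $\mu' > 1$, for $k$ large enough $d_k$ exceeds $\epsilon$, contradicting the uniform $\epsilon$-closeness of the orbits. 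Therefore $x_0 = y_0$.

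The only mildly nontrivial step is the upgrade from infinitesimal expansion to a uniform metric expansion on nearby pairs; this is standard, following from a mean-value inequality in local charts and uniform continuity of $Df^n$ on a tubular neighborhood of the compact set $\Lambda$. Unlike the two-sided case of Lemma \ref{expansiveness}, no local-product-structure or intersection-of-local-manifolds argument is needed, which is why the lemma is noted as an immediate variant of its predecessor.
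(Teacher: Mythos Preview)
Your argument is correct. The paper does not actually write out a proof of this lemma: it merely remarks that ``one easily shows [it] by a similar argument to Lemma~\ref{expansiveness}'', so there is little to compare against. Your direct metric-expansion argument is precisely what that similar argument collapses to once the stable bundle is trivial (so that $W^s_\epsilon(x)=\{x\}$ and the local-product step in Lemma~\ref{expansiveness} becomes vacuous); the only point worth flagging is that the passage from $\|D_xf^n\cdot v\|\ge \mu\|v\|$ to $d(f^n(x),f^n(y))\ge \mu' d(x,y)$ really uses the mean-value inequality applied to the \emph{local inverse} of $f^n$ (equivalently, $\|(D_xf^n)^{-1}\|\le 1/\mu$), which you correctly gesture at via the tubular neighborhood.
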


The two latter theorems enable us to explain the proofs of  Smale and Przytycki on {$\Omega$-stability}. We recall that the local stable and unstable manifolds of the points of a hyperbolic set $\Lambda$ for an endomorphism $f$ (which might display a non-empty critical set) are uniquely defined, provided that:
\begin{itemize}
\item Either $f|\Lambda$ is bijective,
\item Either  $\Lambda$ is injective.
\end{itemize}
On the other hand, the local stable manifold are always uniquely defined. Hence under these assumption, by looking at their images or preimages, the following is uniquely defined for $\epsilon>0$ small enough:
\[ W^s_\epsilon (\Lambda) = \cup_{x\in \Lambda} W^s_\epsilon (x)\quad 
W^u_\epsilon (\Lambda) = \cup_{x\in \Lambda} W^u_\epsilon (x)
\quad 
W^s (\Lambda) = \cup_{n\ge 0} f^{-n} (W^s_\epsilon (\Lambda))
\; .\]
The following generalizes Smale's definion of axiom A diffeomorphisms:
\begin{defi}[Axiom A in the sens of Przytycki]
A $C^1$-endomorphism satisfies \emph{axiom A-Prz}, if  its non-wandering set $\Omega$ is equal to the closure of the set of periodic points (or equivalently locally maximal), and if it is the disjoint union of an expanding compact set with a bijective, hyperbolic compact set. 
\end{defi}
For such maps we can generalize the notion of \emph{spectral decomposition}. Indeed 
by local maximality and compactness, the non-wandering set $\Omega$ of such maps is the finite union of (maximal) transitive subsets $\Omega_i$ called \emph{basic pieces}:
\[\Omega= \sqcup_i \Omega_i\; .\]
The family  $(\Omega_i)_i$ is called the \emph{spectral decomposition} of the axiom A-Prz endomorphism. Let us generalize the  no-cycle condition for such endomorphisms. 
 \begin{defi}[No-cycle condition]
An axiom A-Prz,  $C^1$-endomorphism satisfies the \emph{no-cycle condition} if given $\Omega_1,\Omega_2, \dots, \Omega_n$ in the spectral decomposition, if $W^u_\epsilon(\Omega_i)$ intersects $W^s(\Omega_{i+1})$ for every $i<n$ and if 
$W^u_\epsilon(\Omega_n)$ intersects $W^s(\Omega_{1})$, then $\Omega_1=\Omega_2= \cdots = \Omega_n$.
\end{defi}
F. Przytycki  generalized Smale's Theorem on the $\Omega$-stability of axiom A diffeomorphisms which satisfy the no-cycle condition as follows:
\begin{thm}[\cite{Sm68}, \cite{Pr77}]
If a $C^1$-endomorphism satifies axiom A-Prz and the no-cycle condition, then it is $C^1-\Omega$-stable.
\end{thm}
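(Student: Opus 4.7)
The plan is to combine, on one hand, the structural stability of each basic piece of the spectral decomposition — provided by Theorem \ref{anosovthm} for the bijective pieces and by Shub's theorem for the expanding ones — with, on the other hand, a filtration argument enforced by the no-cycle condition. The first ingredient supplies, for every $f'$ that is $C^1$-close to $f$, a hyperbolic continuation $\Omega_i(f')$ of every basic piece $\Omega_i$ of $f$, together with a topological conjugacy $h_i\colon \Omega_i\to \Omega_i(f')$ that intertwines the dynamics. Each $\Omega_i(f')$ is again transitive, locally maximal, with a dense set of (hyperbolic continuations of) periodic points, and so $\sqcup_i\Omega_i(f')\subseteq \Omega_{f'}$. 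The whole game is then to prove the reverse inclusion $\Omega_{f'}\subseteq \sqcup_i\Omega_i(f')$, and to glue the $h_i$'s into a single homeomorphism conjugating $f|\Omega_f$ to $f'|\Omega_{f'}$.

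To obtain the reverse inclusion I would construct, from the no-cycle hypothesis, a \emph{Smale filtration} adapted to the spectral decomposition: a chain of compact sets $\emptyset=M_0\subsetneq M_1\subsetneq\cdots\subsetneq M_N=M$ with $f(M_j)\subset \mathrm{int}\,M_j$ for every $j$, and such that each layer $M_j\setminus M_{j-1}$ contains exactly one basic piece $\Omega_{\sigma(j)}$, which is the maximal invariant set of $f$ there. The classical way to produce such a filtration is to introduce the relation $\Omega_i\prec \Omega_\ell$ whenever $W^u_\epsilon(\Omega_i)\cap W^s(\Omega_\ell)\neq\emptyset$; the no-cycle condition guarantees that $\prec$ is a genuine partial order, which after extension to a total order yields the desired $M_j$'s by gluing fundamental neighborhoods of the basic pieces along stable and unstable manifolds. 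Since the inclusion $f(M_j)\subset \mathrm{int}\,M_j$ is open in the $C^1$-topology, it persists for $f'$ close to $f$, so every $f'$-orbit is eventually trapped in some layer, and every non-wandering point of $f'$ in $M_j\setminus M_{j-1}$ lies in the maximal invariant set of $f'$ in a small neighborhood of $\Omega_{\sigma(j)}$; by local maximality of the continuation this forces it to lie in $\Omega_{\sigma(j)}(f')$.

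Having identified $\Omega_{f'}=\sqcup_i\Omega_i(f')$, the map $h\colon\Omega_f\to \Omega_{f'}$ defined piecewise by $h|\Omega_i=h_i$ is a bijection intertwining $f$ and $f'$; it is a homeomorphism because the layers $M_j\setminus M_{j-1}$ provide pairwise disjoint closed neighborhoods of the pieces which are preserved under small perturbation.

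The main obstacle, compared with Smale's original proof for diffeomorphisms, is the fact that preimages are no longer unique: the construction of the filtration and in particular the meaningful use of the symbol $W^u_\epsilon(\Omega_i)$ rely on the Przytycki axiom A hypothesis that each basic piece is either bijective or expanding, which is exactly what makes the objects $W^u_\epsilon(\Omega_i)$ and $W^s(\Omega_i)$ defined without reference to a choice of inverse orbit. A secondary technical point is to verify that the hyperbolic continuations of expanding basic pieces (which sit at the top of $\prec$ since they attract only from within their forward basins) and of bijective basic pieces fit together consistently into a single filtration that is stable under $C^1$-perturbation; this follows from the compactness of the $M_j$'s together with the strict invariance $f(M_j)\subset \mathrm{int}\,M_j$, but is the step that requires the most bookkeeping.
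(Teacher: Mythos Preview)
Your proposal is correct and follows essentially the same route as the paper: structural stability of each basic piece (via Anosov's and Shub's theorems), construction of a filtration from the no-cycle condition, and use of the uniform local maximality of the continuations to identify $\Omega_{f'}$ with $\sqcup_i\Omega_i(f')$. The paper's sketch is terser --- it states the filtration as a black-box proposition proved via Conley theory in \cite{shubstab78}, whereas you outline the partial-order construction explicitly --- but the architecture is the same, and your additional remarks on the endomorphism-specific issues (well-definedness of $W^u_\epsilon(\Omega_i)$ under the Przytycki hypothesis) are to the point.
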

\begin{proof}[Sketch of proof of the Smale's $\Omega$-stability Theorem]
First let us recall that by Anosov Theorem, the non-wandering set $\Omega$ is structurally stable, and its hyperbolic continuation is still locally maximal (for a neighborhood uniformly large among an open set of perturbations of the dynamics).

Then the no-cycle condition is useful to construct a filtration $(M_i)_i$:
\begin{prop}
If an axiom A, $C^1$-diffeomorphism $f$ satisfies the no-cycle condition, then there exists a chain of open subsets:
\[\emptyset = M_0 \subset M_1\subset \cdots \subset M_N=M\]
 so that 
  $f(M_i)\Subset M_i$ and $\Omega_i \Subset M_{i}\setminus M_{i-1}$ for every $i\ge 1$.
 \end{prop}
The proof of this proposition involve Conway Theory and can be find 
in \cite[Thm 2.3 p. 9]{shubstab78}.

By using this filtration and the (uniform) local maximality of the hyperbolic continuation of the non-wandering set, one easily deduces the $\Omega$-stability. 
%
%
\end{proof}


\subsection{Structural stability of dynamics satisfying axiom A and  the strong transversality condition}\label{hypimpliesstab2} 
\paragraph{Structural stability of diffeomorphisms}
We recall that an axiom A diffeomorphism satisfies the {strong transversality condition} if for any  non-wandering points $x$ and $y$, the stable manifold of $x$ is transverse to the unstable manifold of $y$.

\begin{rema} By using the inclination lemma, one easily shows that the strong transversality condition implies the no-cycle condition.
\end{rema}

The following theorem generalizes Anosov Theorems \ref{anosovthm}:

\begin{thm}[Robbin \cite{Ro71}, Robinson \cite{Ro76}] \label{SSTRob}
For every $r\ge 1$, the diffeomorphisms which satisfy axiom A and the strong transversality condition are $C^r$-structurally stable.\end{thm}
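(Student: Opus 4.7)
My plan is to generalise the implicit-function-theorem strategy used for Anosov's Theorem 3.3 from a single basic piece to the whole manifold, the two new ingredients being the filtration provided by the no-cycle condition (which strong transversality implies, as noted in the preceding remark) and the transverse geometry of the stable/unstable foliations of the basic pieces. As before, I would consider the map
\[
\Phi \colon (h, f') \in C^0(M,M) \times C^r(M,M) \longmapsto f' \circ h \circ f^{-1} \in C^0(M,M),
\]
note that $\Phi(i,f) = i$, and try to solve $\Phi(h,f') = h$ by the implicit function theorem. The heart of the argument is again to invert the linear operator
\[
\mathrm{id} - \Psi \colon \Gamma \to \Gamma, \qquad \Psi \sigma = Df \circ \sigma \circ f^{-1},
\]
where $\Gamma = \{\sigma \in C^0(M, TM) : \sigma(x) \in T_x M\}$, but now over all of $M$ rather than over a single hyperbolic set.

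The first step is to use the no-cycle condition, provided by the strong transversality, to get a filtration $\emptyset = M_0 \subset M_1 \subset \cdots \subset M_N = M$ with $f(\overline{M_i}) \subset M_i$ and $\Omega_i \subset M_i \setminus M_{i-1}$ from the proposition recalled in Section~3.1. On each basic piece $\Omega_i$, Theorem 3.3 already provides a hyperbolic continuation and an injective semi-conjugacy; one then propagates the conjugacy to the basin $W^s(\Omega_i)$ by iterating: if $h$ is defined near $\Omega_i$ then for $x$ with $f^n(x) \in W^s_{\mathrm{loc}}(\Omega_i)$ one is forced to set $h(x) = (f')^{-n}(h(f^n(x)))$.

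The crucial step is inverting $\mathrm{id} - \Psi$ on the wandering part, where there is no a priori hyperbolic splitting. Here I would use the strong transversality condition: every $x \in M$ lies in $W^u(\Omega_i) \pitchfork W^s(\Omega_j)$ for some pair of basic pieces, so one has a pointwise splitting $T_x M = F^u_x \oplus F^s_x$ with $F^u_x = T_x W^u(\Omega_i)$ and $F^s_x = T_x W^s(\Omega_j)$ that is continuous on each stratum and invariant under $Df$. On the $F^s$-component $\Psi$ is a contraction, hence $\mathrm{id} - \Psi$ inverts by the Neumann series $\sum_{n \ge 0} \Psi^n$; on the $F^u$-component $\Psi$ has a contracting inverse, so $\mathrm{id} - \Psi$ inverts by $-\sum_{n \ge 1} \Psi^{-n}$, exactly as in the Anosov argument. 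Proceeding by induction on the filtration, one extends the semi-conjugacy from $M_{i-1}$ to $M_i$ by solving the corresponding linear equation over the compact fundamental domain in $M_i \setminus M_{i-1}$, using that the unstable leaves of $\Omega_i$ intersect the previously-constructed stable foliations transversally.

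The main obstacle is precisely that the splitting $F^u \oplus F^s$ is only continuous on the strata $M_i \setminus M_{i-1}$ and the dimensions of $F^u, F^s$ jump as one approaches different basic pieces, so one cannot globally decompose $\Gamma = \Gamma^u \oplus \Gamma^s$. This forces the inductive, stratum-by-stratum setup; controlling the loss of regularity and the matching of solutions at the boundaries between strata is the genuinely delicate point and is where Robbin's argument for $r \ge 2$ and Robinson's refinement to $r = 1$ diverge in technique. Once the linear operator is shown to be a surjection with a bounded right inverse, the implicit function theorem (in its surjective-derivative form) produces a continuous $h$ with $f' \circ h = h \circ f$; injectivity of $h$ then follows from the expansiveness of each basic piece (Lemma 3.4) combined with the fact that, under axiom A, every orbit accumulates on $\Omega$ in both time directions, so two points with the same image must have orbits asymptotic to the same basic piece and coincide there.
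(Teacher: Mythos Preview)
Your injectivity argument is the genuine gap. Axiom A diffeomorphisms are in general \emph{not} expansive: already for a Morse--Smale map on $S^1$ with a source and a sink, two distinct wandering points can have their full bi-infinite orbits uniformly $\epsilon$-close. So from $h(x)=h(y)$ you only get that $(f^n(x))_n$ and $(f^n(y))_n$ are uniformly close, and the fact that both orbits accumulate on the same basic pieces tells you nothing --- Lemma~3.4 applies only to orbits \emph{in} $\Lambda$, not to wandering orbits that merely limit on $\Lambda$. The paper addresses this explicitly: it introduces the metric $d_f(x,y)=\sup_{n\in\mathbb Z} d(f^n(x),f^n(y))$ and arranges the construction so that the section $\sigma$ (with $h=\exp\circ\sigma$) is $d_f$-Lipschitz with small constant $\eta$; a short computation then shows $h(x)=h(y)\Rightarrow x=y$. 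Obtaining this $d_f$-Lipschitz control is precisely where the regularity enters: Robbin needed $f\in C^2$ to build $d_f$-Lipschitz extended bundles, and Robinson's refinement for $r=1$ is to smooth $Df$ to a nearby $C^1$ map and carry out the whole construction with that surrogate. None of this appears in your outline.

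There is also a methodological difference in how the linear problem is handled. Rather than a stratum-by-stratum induction with the pointwise splitting $T_xW^u(\Omega_i)\oplus T_xW^s(\Omega_j)$, the paper extends each $E^s|\Omega_i$ and $E^u|\Omega_i$ to continuous subbundles $E^s_i,E^u_i$ over neighbourhoods $N_i=M_i\setminus M_{i-1}$ with the nesting property $Df(E^s_i)\subset E^s_j$, $Df(E^u_i)\supset E^u_j$ when $f(x)\in N_j$, $j\le i$. Using a partition of unity $(\gamma_i)$ subordinate to $(N_i)$ and the associated projections $p^s_i,p^u_i$, it writes down a single global left inverse
\[
J(\sigma)=\sum_i\Big(\sum_{n\ge 0} Df^n\big(\gamma_i\, p^s_i\sigma\circ f^{-n}\big)-\sum_{n\ge 1} Df^{-n}\big(\gamma_i\, p^u_i\sigma\circ f^{n}\big)\Big)
\]
of $\Psi=\mathrm{id}-Df\circ(\cdot)\circ f^{-1}$, and then solves the nonlinear equation as the fixed point of the contraction $J\circ(\Psi-\Phi_{f'})$. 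This avoids the matching-at-boundaries issues you flag, and --- crucially --- it is this explicit $J$ that one shows preserves small $d_f$-Lipschitz constants, feeding the injectivity step above.
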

Let us recall that the Ma\~ne theorem \cite{Ma88} implies that a $C^1$-structurally stable diffeomorphism satisfies also  axiom A and the strong transversality condition, and so both solve the conjecture   of $C^1$-structural stability. 

We will state Conjecture \ref{ConjPrz} generalizing this theorem for local diffeomorphisms. Hopefully the following will help the reader to tackle it. 
\begin{proof}[Sketch of proof of Theorem \ref{SSTRob}]
Again we want to solve the following semi-conjugacy equation:
\begin{equation}\tag{$\star$} f'\circ h\circ f^{-1} =  h 
\end{equation}
for $f'$ $C^1$-close to $f$ and $h$ $C^0$-close to the identity of $M$. 

For  $f'=f$ and $h=id$, Equality ($\star$) is valid. The set of perturbations of the identity is isomorphic to  $\Gamma = \{ \gamma \in C^0(M, TM): \forall x \in M\quad \gamma (x) \in T_{x} M\}$ by using the exponential map (associated to a Riemannian metric on $M$). Let 
$\tilde f := u\in T_xM \mapsto \exp_{f(x)}^{-1}\circ  f \circ \exp_x(u)$.

Then Equation $(\star)$ is equivalent to:
\begin{equation}\tag{$\star\star$} \tilde f'\circ \sigma\circ f^{-1} = \sigma, \quad \text{for }\sigma \in \Gamma \quad C^0\text{-small.}
\end{equation}

As the map 
$\Phi \colon (\sigma, f')\in \Gamma \times C^1(M,M) \to 
\Phi_{f'}(\sigma)=\sigma -\tilde f'\circ \sigma \circ f^{-1}  \in \Gamma $ is of class $C^1$, 
and vanishes at $(0,f)$,  
we shall show that $\partial_h  \Phi$ is left-invertible. 

Let  
$$\Psi:=\partial_h  \Phi(0,f)\colon \sigma \in \Gamma\mapsto \sigma-
Df \circ  \sigma\circ f^{-1}   \in \Gamma \; .$$


The following is shown in \cite{Ro71}:
\begin{prop}\label{sectionEi}
For every $i$, there exists a neighborhood $N_i$ of $\Omega_i$ and  continuous extension $E^s_i$ and $E^u_i$ of respectively $E^s|\Omega_i$ and $E^u|\Omega_i$ to $N_i$, so that:
\begin{itemize}
\item There exists a filtration $(M_i)_i$ adapted to $(\Omega_i)_i$ so that $N_i= M_i\setminus M_{i-1}$. The subsets  $(N_i)_i$ form an open covering of $M$,
\item if $x\in N_i\cap f^{-1}(N_j)$, with $j\le i$, then $Df(E^s_i(x))\subset E^s_j(f(x))$, and $Df(E^u_i(x))\supset E^u_j(f(x))$.
\end{itemize}
\end{prop}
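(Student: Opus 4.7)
The plan is to first invoke the no-cycle condition (which follows from strong transversality via the inclination lemma, as observed in the preceding remark) to obtain the filtration, and then to construct the extensions $E^s_i$ and $E^u_i$ by recursion along the levels of that filtration.

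First, I would invoke the preceding proposition to obtain the filtration $\emptyset = M_0 \subset M_1 \subset \cdots \subset M_N = M$ with $f(M_i)\Subset M_i$ and $\Omega_i\Subset N_i := M_i \setminus M_{i-1}$. The open sets $N_i$ are pairwise disjoint and their union covers $M$ (after a harmless thickening of the locally closed sets). Since $f(M_i) \subset M_i$, for $x \in N_i$ the image $f(x)$ lies in $M_i$, hence in $N_j$ for some $j \le i$, which accounts for the hypothesis in the second bullet.

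Second, I would verify the dimensional feasibility of the requested containments. If $x \in N_i \cap f^{-1}(N_j)$ with $j<i$, then by the filtration structure and local maximality of the $\Omega_\bullet$'s the forward orbit of $x$ accumulates on $\Omega_j$ while the backward orbit accumulates on $\Omega_i$, so up to approximation $x$ lies on $W^u(\Omega_i) \cap W^s(\Omega_j)$. The strong transversality condition gives $\dim W^u(\Omega_i) + \dim W^s(\Omega_j) \ge \dim M$, which rearranges to $\dim E^s|\Omega_j \ge \dim E^s|\Omega_i$ and $\dim E^u|\Omega_i \ge \dim E^u|\Omega_j$. Hence the proposed inclusions $Df(E^s_i(x)) \subset E^s_j(f(x))$ and $Df(E^u_i(x)) \supset E^u_j(f(x))$ are at least dimensionally possible.

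Third, I would build the extensions by induction along the filtration. For $E^s_i$, recurse upward from $i=1$: on $N_1$ the distribution $E^s|\Omega_1$ admits a continuous extension to a neighborhood by local sections of the Grassmann bundle, and there is no outgoing constraint since $f(N_1) \subset M_1$ stays at the bottom level. For $i \ge 2$, first extend $E^s|\Omega_i$ continuously on a small tubular neighborhood $U_i$ of $\Omega_i$ inside $N_i$; then for $x \in N_i \setminus U_i$ with $f(x) \in N_j$, $j < i$, select $E^s_i(x)$ as a continuously varying subspace of the correct dimension inside the pullback $Df^{-1}(E^s_j(f(x)))$ (which has a strictly larger dimension by the inequality above), matching the previously fixed extension on $U_i$. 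The distribution $E^u_i$ is built symmetrically, recursing downward from the top stratum by a dual argument with $f^{-1}$.

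The main obstacle is establishing the continuity of these selections across the stable and unstable manifolds of the $\Omega_i$'s, where the exit time into a lower (resp. higher) stratum blows up to infinity. The key observation is that by the hyperbolicity on $\Omega_i$, iterated pullbacks $Df^{-n}(E^s_{j_n}(f^n(x)))$ along a forward orbit collapse in the Grassmannian onto $E^s|\Omega_i$ as $x$ approaches $W^s(\Omega_i)$, which agrees with the extension already prescribed near $\Omega_i$. Recasting the construction in terms of narrowly opened cone fields, in which the subspace containments are replaced by containments of Grassmannian cones and the continuous selection is performed afterwards inside those cones, is what makes this continuity quantitative and yields the required extensions with the prescribed invariance.
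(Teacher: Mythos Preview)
The paper does not actually prove this proposition: it introduces it with ``The following is shown in \cite{Ro71}'' and then uses it as a black box in the Robbin--Robinson argument. So there is no in-paper proof to compare against; the reference is Robbin's original construction.

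Your outline is in the spirit of Robbin's argument: filtration from the no-cycle condition, the dimension inequality $\dim E^s|\Omega_j \ge \dim E^s|\Omega_i$ coming from strong transversality along heteroclinic connections, then a level-by-level extension compatible with the pushforward/pullback by $Df$, with cone fields controlling continuity near the basic pieces. That is the right skeleton.

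Two points are underspecified. First, in your induction for $E^s_i$ you treat only the case $f(x)\in N_j$ with $j<i$, but for $x\in N_i$ one can perfectly well have $f(x)\in N_i$ (indeed this happens on all of $W^s(\Omega_i)\cap N_i$, not just on $\Omega_i$); your construction must define $E^s_i$ on the full fundamental domain of $N_i$ for the dynamics, including the part of the local stable set of $\Omega_i$, before the pullback argument from lower strata can take over. Second, the phrase ``select $E^s_i(x)$ as a continuously varying subspace of the correct dimension inside the pullback'' hides the real work: a continuous sub-bundle of prescribed rank inside a continuous bundle over a non-contractible set need not exist in general, and in Robbin's proof this selection is made to match along overlaps and to agree with the hyperbolic extension near $\Omega_i$ --- this is exactly where the strong transversality condition (and not merely the dimension count it implies) is used, since the stable foliation of $\Omega_j$ restricted to $W^u(\Omega_i)$ provides the geometric guide for the selection. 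Your final paragraph with iterated pullbacks and cone fields is the right tool to close both gaps, but as written the sketch does not yet show how the two regimes (near $\Omega_i$ versus already fallen to a lower level) are glued continuously.
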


Let $(\gamma_i)_i$ be a partition of the unity adapted to $(N_i)_i$.

For every $i$ let $p^s_i$ and $p^u_i$ be the projections onto respectively $E^s_i$ and $E^u_i$ parallely to  $E^u_i$ and $E^s_i$. 

Given $x\in M$ and $v\in T_x M$, we put 
$v_i^s:= \gamma_i \cdot p^s_i(v)$ and   $v_i^u:= \gamma_i \cdot p^u_i(v)$. 
We observe that $v= \sum_i v_i^s+v_i^u$. Thus
$Df(v)= \sum_i Df(v_i^s)+Df(v_i^u)$. 
As $(Df^n(v_i^s))_{n\ge 0}$ and $(Df^{-n}(v_i^u))_{n\ge 1}$ converge exponentially fast to $0$, we consider:

\[J\colon \sigma\in \Gamma \mapsto \sum_i \sum_{n\ge 0} Df^n(\sigma_i^s\circ f^{-n}(x))
-\sum_{n\ge 1} Df^{-n}(\sigma_i^u\circ f^{n}(x))\; .\]

We notice that $J$ is a left inverse of $\Psi$ :
$$ J\circ \Psi = id$$.

The following equations are equivalent:
$$ \Phi_{f'}(\sigma)=0\Leftrightarrow (\Phi_{f'}-\Psi)(\sigma)+\Psi(\sigma)=0,$$
$$ \Leftrightarrow J\circ (\Phi_{f'}-\Psi)(\sigma)+J\circ \Psi(\sigma)=J(0).$$
Now observe that $J(0)=0$ and $J\circ \Psi(\sigma)=\sigma$. Hence  $(\star\star)$ is equivalent to 
$$J\circ (\Psi-\Phi_{f'})(\sigma)=\sigma.$$

It is easy to see that whenever $f'$ is $C^1$-close to $f$, the map $\Phi_{f'}$ is $C^1$-close to $\Psi$ at a neighborhood of the $0$-section. Hence the map $J\circ (\Psi-\Phi_{f'})$ is contracting and sends a closed ball about the zero section into itself. The contracting mapping theorem implies the existence of a fixed point $\sigma$.
Hence $(\star)$ displays a solution $h=\exp\circ \sigma$ close to the identity in the space of continuous maps.

It remains to show that the semi-conjugacy $h$ is bijective. Contrarily to Anosov maps, in general  axiom A diffeomorphisms are not expansive and the semi-conjugacy is not uniquely defined. Hence Robbin brought  a new technique to construct a map $h$ which is bijective. He defined the following metric:
$$ d_f(x,y)= \sup_{n\in \mathbb Z} d(f^n(x),f^n(y))\;,$$
where $d$ is the Riemannian metric of the manifold $M$.


Let us just notice that if the semi-conjugacy  $h=\exp\circ \sigma$ satisfies that $\sigma$ is $C^0$-small and $d_f$-Lipschitz with a small constant $\eta$, then $h$ is injective.

Indeed if $h(x)=h(y)$, then by $(\star)$, $h(f^n(x))= h(f^n(y))$ for every $n$. Since $h$ is close to the identity, the orbits $(f^n(x))_n$ and $(f^n(y))_n$ are uniformely close, and so that $d_f(x,y)$ is small. As $\sigma$ is $\eta$-Lipschitz, it comes:
\[0= d(h(x),h(y))\ge d(x,y)-\eta d_f(x,y)\]    
The same holds at any $n^{th}$-iterate:
\[0=d(h(f^n(x)),h(f^n(y)))\ge d(f^n(x),f^n(y))-\eta d_f(f^n(x),f^n(y))=d(f^n(x),f^n(y))-\eta d_f(x,y)\;.\]
Let $n$ be such that $d(f^n(x),f^n(y))\ge  d_f(x,y)/2$. Then 
\[0=d(h(f^n(x)),h(f^n(y)))\ge (1-2\eta) d(f^n(x),f^n(y))\;.\]
Thus $f^n(x)=f^n(y)$ and so $x=y$.

To obtain the section $\sigma$ $d_f$-Lipschitz, Robbin assumed the diffeomorphism $f$ of class $C^2$. Then in Proposition \ref{sectionEi}, he constructs the section $(E^s_i)_i$ and  $(E^u_i)_i$ $d_f$-Lipschitz, so that the map $J$ preserves the  $d_f$-Lipschitz sections. On the other hand the map $\Psi-\Psi_{f'}$ diminishes the $d_f$-Lipschitz constant for $f'$ $C^1$-close to $f$. Therefore the map $J\circ (\Psi-\Psi_{f'})$ preserves the space of continuous sections with small $d_f$-Lipschitz constant, and so its fixed point enjoys a small $d_f$-Lipschitz constant.  

The $C^1$-case was handled by Robinson. His trick was to smooth the map $Df$ to a $C^1$-map $\tilde Df$, and to replace $Df$ by $\tilde Df$ in the definition of $\Psi$ to define $\tilde \Psi$. Then he defined  likewise $\tilde Df$-pseudo invariant sections $(\tilde E^s_i)_i$ which are $d_f$-Lipschitz. By replacing $(E^s_i)_i$ by $(\tilde E^s_i)_i$ in the definition of $J$, he defined a left inverse $\tilde J$ of $\tilde \Psi$. Then he showed likewise that the map $\tilde J\circ (\tilde \Psi-\Psi_{f'})$ admits a $C^0$-small, $d_f$-Lipschitz fixed point, which is a solution of $(\star\star)$. 
\end{proof}

\paragraph{Structural stability of covering.}
We recall that every local diffeomorphism of a compact (connected) manifold is a covering. 

F. Przytycki \cite{Pr77} introduced an example of surface covering suggesting the following \emph{strong transversality condition}.
 \begin{defi}
A covering map $f$ satisfies {\emph axiom A and the strong transversality condition} if:
\begin{enumerate}[(i)]
\item The non-wandering set is locally maximal.
\item  The non-wandering set $\Omega$ is the union of a hyperbolic set on which $f$ acts bijectively with a repulsive set.
\item $\forall x\in \Omega, \; \underline y^1,\dots,\underline  y^k)\in \overleftarrow \Omega$, the following multi-transversality condition holds:
$$W^s(x)\pitchfork W^u(\underline  y^1)\pitchfork \cdots \pitchfork W^u(\underline  y^k)\; .$$
\end{enumerate}
\end{defi}
We recall that a finite family of submanifolds $(N_i)_i$ is multi-transverse if $N_1$ and $N_2$ are transverse, $N_3$ is transverse to $N_1\cap N_2$, ..., and for every $i \ge 3$, $N_{i}$ is transverse to $N_1\cap N_2\cap \cdots \cap N_{i-1}$. We notice that $(iii)$ implies $(ii)$.

Here is a generalization of a conjecture of  Przytycki \cite{Pr77}:

\begin{conj}\label{ConjPrz}
The $C^1$-struturally stable coverings are those which satisfy  axiom A and the strong transversality condition.
\end{conj}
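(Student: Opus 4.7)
The plan is to adapt the Robbin--Robinson scheme from the proof of Theorem \ref{SSTRob} to the covering setting, focusing on the direction \emph{axiom A and strong multi-transversality $\Rightarrow C^1$-structural stability}. The opposite direction is the analogue for coverings of Ma\~n\'e's theorem and requires a separate Ma\~n\'e--Aoki--Hayashi-type argument, as alluded to in Section \ref{secStabimplieshyp}. I would look for a conjugacy $h\colon M\to M$ close to $\mathrm{id}_M$ of the form $h=\exp\circ\sigma$, with $\sigma$ a small continuous section of $TM$. The semi-conjugacy $h\circ f = f'\circ h$ rewrites as $F_{f'}(\sigma)=0$, where $F_{f'}(\sigma)(x):=\sigma(f(x))-\exp^{-1}_{f(x)}(f'\circ\exp_x\sigma(x))$, and $F_f(0)=0$. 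Its linearization in $\sigma$ at $(0,f)$ is
\[\Psi\colon \tau\in \Gamma \longmapsto \tau\circ f - Df\circ \tau \in \Gamma,\qquad \Gamma := \{\tau\in C^0(M,TM)\colon \tau(x)\in T_xM\}.\]
As in the diffeomorphism case it then suffices to build a bounded left inverse $J$ of $\Psi$ and apply the contracting mapping theorem to $\mathrm{id}-J\circ(\Psi-F_{f'})$.

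\paragraph{Filtration and local splittings.} First I would check that condition (iii) implies the no-cycle condition by an inclination lemma adapted to coverings, and then obtain a filtration $\emptyset=M_0\subset\cdots\subset M_N=M$ with $f(M_i)\Subset M_i$ and $\Omega_i\Subset N_i:=M_i\setminus M_{i-1}$, exactly as in Proposition \ref{sectionEi}. On each $N_i$ the hyperbolic data extend to continuous subbundles $E^s_i,E^u_i\subset TM|N_i$: on an expanding basic piece one sets $E^u_i=TM$ and $E^s_i=0$; on a bijective hyperbolic piece $E^u_i$ is given over $\Omega_i$ by tangents to local unstable manifolds (unambiguous because $f|\Omega_i$ is bijective), then extended continuously. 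The extensions are chosen to be $Df$-nested: if $x\in N_i$, $f(x)\in N_j$, $j\le i$, then $Df(E^s_i(x))\subset E^s_j(f(x))$ and $Df(E^u_i(x))\supset E^u_j(f(x))$. A partition of unity $(\gamma_i)$ and projections $p^s_i,p^u_i$ subordinate to this splitting then decompose $\tau=\sum_i(\tau^s_i+\tau^u_i)$.

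\paragraph{Construction of $J$ --- the main obstacle.} One then mimics Robbin's formula, defining $J(\rho)(x)$ as a sum of forward iterates of $Df^{-1}$ applied to the stable components $\rho^s_i$ and backward iterates of $Df$ applied to the unstable components $\rho^u_i$. The stable side is routine, since forward orbits are unique and $Df$ is locally invertible along them. The unstable side is where the non-invertibility truly bites: a point $x$ carries in general infinitely many backward orbits $\hat y\in\overleftarrow M$, each giving rise to a different local unstable leaf through $x$, so the naive series $\sum_{n\ge 1}Df^{-n}_{\hat y}(\rho^u)$ risks being ill-defined or unbounded in operator norm. The strong multi-transversality condition (iii) is designed precisely to control this: at each $x$ only finitely many relevant backward orbits $\hat y^1,\dots,\hat y^k$ contribute to a prescribed stratum of the filtration, and the tangent spaces of their unstable leaves span $E^u_i(x)$ transversally, yielding a uniform bound. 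Making this rigorous---presumably by first building $J$ on the natural extension $\overleftarrow M$ as an operator on $\overleftarrow f$-equivariant sections and then pushing down to $M$ via the multi-transverse decomposition at each point---is the hardest step of the program, and the reason a direct transcription of Robbin--Robinson does not immediately work.

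\paragraph{From semi-conjugacy to homeomorphism.} Once $J$ is bounded, the contraction argument produces $\sigma$ with $h:=\exp\circ\sigma$ a continuous semi-conjugacy $C^0$-close to $\mathrm{id}_M$. Injectivity is then obtained by Robbin's device: arrange $\sigma$ to be Lipschitz for the orbit metric $d_f(x,y)=\sup_{n\ge 0}d(f^n(x),f^n(y))$ (Robinson's smoothing trick if only $C^1$ regularity is available), so that $h(x)=h(y)$ forces $d_f(x,y)$ small and hence $x=y$. Surjectivity is automatic: the homotopy $s\mapsto\exp(s\sigma)$ connects $\mathrm{id}_M$ to $h$ through maps $C^0$-close to the identity, all coverings of the same degree, so $h$ is onto.
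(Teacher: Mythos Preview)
The statement you are ``proving'' is labeled in the paper as a \emph{conjecture}, and the paper offers no proof of it. In fact the text immediately following Conjecture~\ref{ConjPrz} explains its current status: the implication \emph{structural stability $\Rightarrow$ axiom A and strong transversality} is already established for coverings (axiom~A by Aoki--Moriyasu--Sumi, the transversality condition by Iglesias--Portela--Rovella), while the direction you attempt, \emph{axiom A and strong transversality $\Rightarrow$ $C^1$-structural stability}, is declared ``still open in the general case'', with only the surface case and the attractor--repellor case settled. So there is no paper proof to compare against; what you have written is a program, and the relevant question is whether it closes the gap.

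It does not. Your outline follows the Robbin--Robinson strategy and lands exactly where the paper itself lands: one needs a bounded left inverse $J$ for the linearized operator, and the unstable half of $J$ is where non-injectivity of $f$ obstructs a direct transcription. You describe this honestly (``Making this rigorous \dots\ is the hardest step of the program''), but describing an obstacle is not the same as removing it. The paper's own remark after the conjecture points in the same direction---it notes that the $d_f$-Lipschitz plane fields built in \cite{BK13} ``might be useful'' to invert
\[
\sigma\in \Gamma^0(TM)\ \longmapsto\ \sigma - Df^{-1}\circ\sigma\circ f\in \Gamma^0(TM),
\]
which is precisely the covering-adapted version of your $\Psi$ (note, incidentally, that your $\Psi$ as written takes values in sections of $f^*TM$, not in $\Gamma$; the paper's formulation precomposes with $Df^{-1}$, legitimate since $f$ is a local diffeomorphism, to stay inside $\Gamma$). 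Your heuristic that multi-transversality bounds the number of ``relevant'' backward branches at each point is suggestive, but you have not turned it into an actual operator on $\Gamma^0(TM)$ with a uniform norm bound, nor shown that the lift-to-$\overleftarrow M$-then-push-down procedure yields a single-valued section on $M$. Until that is done, the argument remains at the level of the paper's own discussion: a plausible route to the conjecture rather than a proof.
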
 
The fact that structurally stable coverings are axiom A has been proved by Aoki-Moriyasu-Sumi \cite{AMS01}, and the strong transversality condition has been proved by Iglesias-Portela-Rovella. The other direction is still open in the general case.

This conjecture has been proved in two special cases. The first one solves the initial Przytycki conjecture for surface coverings:
\begin{thm}[Iglesias-Portela-Rovella \cite{IPR12}] If a covering map of a surface satisfies  axiom A and the strong transversality condition then it is $C^1$-structurally stable. 
\end{thm}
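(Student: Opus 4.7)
The plan is to adapt the Robbin--Robinson strategy of Theorem \ref{SSTRob} to the non-invertible setting of a surface covering. As there, one seeks a $C^0$-small section $\sigma \in \Gamma = \{\sigma \in C^0(M,TM) : \sigma(x) \in T_x M\}$ whose exponential $h = \exp\circ\sigma$ satisfies $f' \circ h = h \circ f$, rewritten as a fixed-point equation for an operator $\Phi_{f'}$ on $\Gamma$ with $\Phi_f(0)=0$. The naive linearization $\Psi(\sigma) = \sigma - Df \circ \sigma \circ f^{-1}$ fails for want of an inverse, so one lifts the conjugacy equation to the inverse limit $\overleftarrow{M}$, where preimages are unique, and pushes the resulting solution down via uniqueness.

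Since the strong transversality condition implies the no-cycle condition, the filtration proposition yields $\emptyset = M_0 \subset \cdots \subset M_N = M$ with $f(M_i) \Subset M_i$ and $\Omega_i \Subset N_i := M_i \setminus M_{i-1}$. Applying Shub's theorem to the expanding basic pieces and Anosov's theorem to the bijective hyperbolic pieces produces their hyperbolic continuations. I would then build continuous extensions $E^s_i$ of $E^s|\Omega_i$ on each $N_i$, defined by following forward orbits into $\Omega_i$ and pulling back, together with a \emph{multi-valued} extension $E^u_i$: at a point $x \in N_i$, each prehistory $\underline y \in \overleftarrow{N_i}$ accumulating on some $\Omega_j$ with $j \le i$ contributes an unstable direction at $x$. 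The multi-transversality clause of the strong transversality condition, combined with $\dim M = 2$, ensures that these branches remain pairwise transverse and transverse to $E^s_i$, and a compactness argument bounds their number uniformly.

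Using a partition of unity subordinate to $(N_i)_i$, one then mimics Robbin's construction of a left inverse to the linearization,
\[ J(\sigma)(x) = \sum_i \Bigl( \sum_{n\ge 0} Df^n\bigl(\sigma^s_i \circ f^{-n}(x)\bigr) - \sum_{n\ge 1} Df^{-n}\bigl(\sigma^u_i \circ f^n(x)\bigr) \Bigr), \]
where $f^{-n}(x)$ is read as a sum over all prehistory branches, weighted so that the exponential contractions along $E^s_i$ and along each branch of $E^u_i$ preserve absolute convergence. The contraction mapping principle applied to $J \circ (\Psi - \Phi_{f'})$ then produces a continuous semi-conjugacy $h$ close to the identity.

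The main obstacle is proving injectivity of $h$. Robbin's dynamical-metric trick, which uses $d_f(x,y) = \sup_{n \in \mathbb{Z}} d(f^n x, f^n y)$ to produce $\sigma$ with small $d_f$-Lipschitz constant, must be modified since the past requires a choice of matched prehistories; the natural replacement is
\[ d_f(x,y) = \sup_{n\ge 0} d(f^n x, f^n y) + \inf_{\underline x,\underline y}\sup_{n<0} d(x_n,y_n), \]
the infimum being over pairs of matched prehistories of $x$ and $y$. Building the extensions $E^s_i, E^u_i$ Lipschitz for this metric, and verifying that $J$ preserves the space of small-$d_f$-Lipschitz sections so that the fixed point $\sigma$ inherits a small Lipschitz constant, is the technical crux. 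The argument then closes as in Robbin--Robinson: an injective $C^0$-close semi-conjugacy is a homeomorphism. Dimension $2$ is essential precisely because $\dim W^s = \dim W^u = 1$ reduces multi-transversality to pairwise transversality and bounds the branching of prehistories; the loss of this control in higher dimensions is why Conjecture \ref{ConjPrz} remains open in general.
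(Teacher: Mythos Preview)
The paper does not prove this theorem; it is stated as a result of Iglesias--Portela--Rovella with a citation to \cite{IPR12} and no argument is given. So there is no proof here to compare against, but your sketch has a structural gap worth naming. Lifting the conjugacy problem to the inverse limit and solving there yields only \emph{inverse limit stability} --- exactly what Theorem~\ref{BK} delivers, and for a far wider class of maps --- while the phrase ``pushes the resulting solution down via uniqueness'' hides the whole difficulty: a semi-conjugacy on $\overleftarrow{M}_f$ generically depends on the full prehistory at each point and does not descend to a well-defined map on $M$. Bridging that gap is precisely the open content of Conjecture~\ref{ConjPrz}, so it cannot be dismissed in a clause.

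There is also a concrete mis-setup. For a covering, $f^{-1}$ is not a map, so $\sigma \mapsto \sigma - Df\circ\sigma\circ f^{-1}$ is not defined on $\Gamma^0(TM)$; the linearization that actually lives on $M$ (rather than on $\overleftarrow{M}_f$) is $\sigma \mapsto \sigma - Df^{-1}\circ\sigma\circ f$, as the paper itself points out immediately after stating the two IPR theorems. Your candidate $J$ inherits the same problem: summing $Df^n(\sigma^s_i\circ f^{-n}(x))$ over all $d^n$ preimages pits exponential growth in the number of terms against the contraction along $E^s$, and inserting an averaging weight $d^{-n}$ spoils the left-inverse identity $J\circ\Psi = \mathrm{id}$. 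Finally, the branching of prehistories is governed by the degree of the covering, not by $\dim M$; what dimension $2$ actually buys is that stable and unstable leaves are one-dimensional curves, so that multi-transversality collapses to ordinary transversality plus the condition that no three such curves share a point, which is what makes the surface case tractable.
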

The other case is for attractor-repellor covering. 

\begin{thm}[Iglesias-Portela-Rovella \cite{IPR10}] Let $M$
be a compact manifold. If
$f$ is a $C^1$- covering map satisfying  axiom A, and so that its basic pieces are either bijective attractors or expanding sets, then $f$ is $C^1$-structurally stable. \end{thm}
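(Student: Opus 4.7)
The plan is to combine Anosov's theorem (Theorem \ref{anosovthm}) for the bijective attractors with Shub's theorem for the expanding pieces, and to glue the resulting local conjugacies into a global one by exploiting the simple attractor-repellor structure. Two preliminary remarks simplify the task. First, the no-cycle condition is automatic: expanding pieces act as repellors (the only points whose forward orbit stays near them lie in them), while bijective attractors act as sinks, so no cycle among basic pieces is possible. Second, the strong transversality condition of Przytycki is trivially satisfied: at each point of an expanding piece the local unstable set is a whole neighborhood, so multi-transversality with any family of stable manifolds is free; and the stable manifolds emanating from the bijective attractors foliate open neighborhoods of those attractors.

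First I would apply Anosov's theorem to each bijective attractor $\Omega_i^{at}$: because $f|\Omega_i^{at}$ is a bijective homeomorphism of a locally maximal hyperbolic set, the proof of Theorem \ref{anosovthm} applies and yields a continuous injection $h_i^{at}\colon \Omega_i^{at}\to M$ intertwining $f$ and $f'$. Symmetrically, Shub's theorem gives a conjugacy $h_j^{ex}\colon \Omega_j^{ex}\to M$ on each expanding basic piece. Together these provide $\Omega$-stability; indeed our hypothesis is a special case of axiom A-Prz with the no-cycle condition, so Przytycki's theorem is already available.

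The substantive step is the extension of $h$ from $\Omega$ to all of $M$. For each attractor $\Omega_i^{at}$, fix a trapping neighborhood $U_i$ with $\overline{f(U_i)}\subset U_i$ and $\bigcap_{n\ge 0} f^n(U_i)=\Omega_i^{at}$. The local stable manifolds of $\Omega_i^{at}$ extend to a continuous foliation of $U_i$ whose leaf space is $\Omega_i^{at}$, and this foliation is structurally stable under $C^1$-perturbations. Using $h_i^{at}$ together with stable-holonomy identification, extend $h_i^{at}$ continuously to $U_i$ and then to the full basin $B(\Omega_i^{at})=\bigcup_{n\ge 0} f^{-n}(U_i)$ by pulling back with appropriately chosen inverse branches of $f'$: for $x\in B(\Omega_i^{at})$, pick $n$ with $f^n(x)\in U_i$ and set $h(x)=(f')^{-n}_{\mathrm{loc}}(h_i^{at}(f^n(x)))$, where the branch is the one determined by proximity (well-defined since $f$ is a local diffeomorphism and $f'$ is $C^1$-close). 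Symmetrically, for each expanding piece $\Omega_j^{ex}$ the locally defined contracting inverse branches of $f$ on a neighborhood $V_j$ allow $h_j^{ex}$ to be extended to $V_j$, and then forward by $f'$. Since $M\setminus \Omega^{ex}$ is the disjoint union of the attractor basins, the two families of extensions cover $M$.

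The main obstacle is the coherence of these extensions: well-definedness (independence of the iterate $n$ and of the chosen inverse branch), continuity on overlaps of attractor and repellor neighborhoods, and global injectivity. Injectivity will follow by an expansiveness argument in two stages: Anosov-expansiveness on the attractor basins, combined with forward-expansiveness on the expanding pieces and their saturations. Continuity across the transition from expanding-piece neighborhoods to attractor basins is the most delicate point and is exactly what the (here automatic) strong-transversality/multi-transversality condition buys. An alternative, and possibly cleaner, approach would be to adapt the Robbin--Robinson scheme of Theorem \ref{SSTRob} directly to the covering setting: cast the conjugacy equation as a fixed-point problem for a small section $\sigma$ of $TM$, construct a left inverse $J$ of the linearization $\Psi=\mathrm{id}-Df\circ(\,\cdot\,)\circ f^{-1}$ by summing forward iterates on neighborhoods of the attractors (where $Df$ is eventually contracting transversely to nothing, since $E^u$ lies within the attractor) and summing backward iterates along inverse branches on neighborhoods of the expanding pieces (where $Df^{-1}$ is contracting), patch with a partition of unity adapted to a filtration, and conclude by the contraction mapping theorem, upgrading to the $C^1$ case via Robinson's smoothing of $Df$ and of the invariant sub-bundles.
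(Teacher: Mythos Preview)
The paper does not actually give a proof of this theorem: it is stated as a citation from \cite{IPR10} and is not proved in these notes. The only substantive remark the paper makes about it is the sentence immediately following the statement, namely that the strong transversality condition is automatically satisfied because ``the unstable manifolds are either included in the attractor or form open subsets of the manifold.'' Your second preliminary remark is exactly this observation, so on the one point where the paper does say something, you agree with it.

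As for the content of your sketch: the strategy is reasonable and is in the spirit of the attractor--repellor decomposition the paper emphasizes, but it remains a sketch rather than a proof. You correctly isolate the real difficulty --- coherence and injectivity of the glued extension --- and then offer two routes (direct gluing versus a Robbin--Robinson left-inverse construction) without carrying either to completion. In particular, for the direct route, the claim that the local stable manifolds of a bijective attractor of a non-invertible covering ``extend to a continuous foliation of $U_i$ whose leaf space is $\Omega_i^{at}$'' needs justification: different preimages of the same attractor point may produce overlapping local stable leaves, and showing that the holonomy-based extension is single-valued and continuous is precisely the work. For the Robbin--Robinson route, the map $\sigma\mapsto Df\circ\sigma\circ f^{-1}$ is not literally available since $f$ is not injective; one must work with $\sigma\mapsto Df^{-1}\circ\sigma\circ f$ (or on the inverse limit), and building the left inverse $J$ with the required $d_f$-Lipschitz control is again the nontrivial step, as the paper itself stresses just after this theorem when discussing \cite{BK13}. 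None of this is wrong, but you should be aware that what you have written is an outline of where the difficulties lie rather than their resolution.
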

The strong transversality condition for these maps is certainly satisfied since, the unstable manifolds are either included in the attractor or form open subset of the manifold. They gave the following example:
\[f\colon (z,z')\in \mathbb S^1\times \hat \C\mapsto( z^2, z/2+ z'/3),\]
where the non-wandering set consists of an expanding circle and of the Smale solenoid. 

In \cite{BK13}, we constructed $d_f$-Lipschitz plane fields for endomorphisms which satisfies  axiom A and the strong transversality condition. This might be useful to prove that under the hypothesis of Conjecture \ref{ConjPrz}, the following map has a left inverse:
\[\sigma\in \Gamma^0(TM) \mapsto \sigma - Df^{-1}\circ \sigma\circ f \in \Gamma^0(TM)\; .\]

\subsection{Structural stability of the inverse limit}\label{hypimpliesstab3}

Given an endomorphism $f$ of a compact manifold $M$, the inverse limit $\overleftarrow M_f$ of $f$ is the space of orbits :
\[\overleftarrow M_f:= \big\{\underline x= (x_n)_{n\in \mathbb Z} \colon x_{n+1} = f(x_n)\big\}\;.\]
It is a closed subset of $M^\mathbb Z$, which is compact endowed with the product metric:
\[d(\underline x,\underline x')= \sum_{n\in \mathbb Z} 2^{-|n|} d(x_n,x'_n)\;.\]
We notice that the inverse limit is homemorphic to $M$ when $f$ is a homeomorphism of $M$. 

We notice also that the shift dynamics $\overleftarrow f$ acts canonically on $\overleftarrow M_f$:
$$\overleftarrow f\colon (x_n)_n \mapsto (x_{n+1})_{n}. $$
With $\pi_0\colon (x_n)_n \mapsto x_0$ the zero coordinate projection, it holds:
$$\pi_0 \circ \overleftarrow f = f\circ \pi_0.$$

From this one easily deduces that the non-wandering sets  $\arr \Omega_{f}$ and $\Omega_{f}$ of respectively $\arr f$ and $f$ satisfies the following relation:
\[ \arr \Omega_{f} = \Omega_{f}^\Z\cap \arr M_f\; .\]

\begin{defi}
The endomorphism $f$ is $C^r$-inverse limit stable if for every $C^r$-perturbation $f'$ of $f$, there exists a homeomorphism $h$ from $\overleftarrow M_f$ onto $\overleftarrow M_{f'}$ so that:
\[h\circ \overleftarrow f = \overleftarrow f'\circ h.\]
\end{defi}

We can define the unstable manifold of every point $\underline x=(x_i)_i\in \overleftarrow \Omega_f$:
\[W^u(\underline x; \overleftarrow f):=\{\underline y=(y_i)_i\in \overleftarrow M_f\colon d(x_i,y_i)\to 0,\; i\to -\infty\}\]
When $f$ satisfies axiom A, it is an actual manifold embedded in $\overleftarrow M_f$. Moreover, the $0$-coordinate projection $\pi_0$ displays a differentiable restriction $\pi_0| W^u(\underline x; \overleftarrow f)$.

On the other hand, there exists $\epsilon>0$ so that the following local stable manifold is an embedded submanifold of $M$, for every  $x\in \Omega_f$:
  \[W^s_\epsilon( x;  f):=\{y\in M\colon d(f^n(x),f^n(y))\to 0,\; n\to +\infty\}\; .\]

In \cite{BR13}, we notice that surprisingly, for certain axiom A endomorphisms, the presence of critical set (made by points with non surjective differential) does not interfere with the $C^1$-inverse structural stability.  This leads us to define:
\begin{defi} An axiom A endomorphism $f$ satisfies the weak transversality condition if for every $\underline x\in \overleftarrow \Omega_f$ and every $y\in \Omega_f$, the map 
$\pi_0|W^u(\underline x; \overleftarrow f)$ is transverse to  $W^s_\epsilon(y)$.\end{defi}

There are many examples of endomorphisms which satisfy axiom A and the weak transversality condition. For instance:
\begin{itemize}
\item any axiom A map of the one point compactification $\hat \R$ of $\R$, in particular those of the form $x\mapsto x^2+c$ and even the constant map $x\mapsto 0$.
\item if $f_1$ and $f_2$ satisfy axiom A and the weak transversality condition, then the product dynamics $(f_1,f_2)$ do so.
\item By the two latter points, note that the map $(x,y,z)\mapsto (x^2,y^2,0)$ of $\R^3$ satisfies axiom A and the weak transversality condition.
\end{itemize}
The latter map is not at all structurally stable, for its critical set is not and intersects moreover the non-wandering set. For this reason the following conjecture might sound irrealistic:
\begin{conj}[Berger-Rovella \cite{BR13}]
The $C^1$-inverse limit stable endomorphisms are  those which satisfy axiom A and the weak transversality condition.
\end{conj}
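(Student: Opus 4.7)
The plan is to attack each direction separately. The easier direction should be axiom A $+$ weak transversality $\Rightarrow$ inverse limit stability, amenable to adapting the Robbin--Robinson scheme to the inverse limit setting. The reverse direction appears substantially harder and is the main obstacle.

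For the forward direction, I would seek the conjugacy as a map $h\colon \overleftarrow{M}_f\to M$ satisfying $h\circ \overleftarrow{f}=f'\circ h$, since any such $h$ extends to a full conjugacy $\underline{x}\mapsto (h\circ \overleftarrow{f}^n(\underline{x}))_{n\in\mathbb{Z}}$ from $\overleftarrow{M}_f$ onto $\overleftarrow{M}_{f'}$. Writing $h(\underline{x})=\exp_{x_0}(\sigma(\underline{x}))$ for a section $\sigma$ of the pull-back bundle $\pi_0^{*}TM$ over $\overleftarrow{M}_f$, the linearization at $\sigma=0$, $f'=f$ is
\[\Psi(\sigma)(\underline{x})=\sigma(\overleftarrow{f}(\underline{x}))-D_{x_0}f\cdot \sigma(\underline{x}).\]
I would then imitate the Robbin--Robinson argument: build an adapted filtration $(M_i)_i$ of $M$ for the spectral decomposition of the axiom A endomorphism, lift it to a filtration on the inverse limit, and extend the hyperbolic splittings to continuous pseudo-invariant fields $E^s_i$, $E^u_i$ in the spirit of Proposition \ref{sectionEi}. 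Inverting $\Psi$ along the stable part is standard (forward contraction of $Df$); inverting it along the unstable part is the delicate point, because the unstable direction at $\underline{x}$ depends on the full past history $(x_{-n})_{n\ge 0}$ and $Df$ fails to be invertible at critical points. The weak transversality condition should allow one to thread these unstable sections through wandering critical points without obstruction, since $\pi_0|W^u(\underline{x})\pitchfork W^s_\epsilon(y)$ is an open condition preserved by $C^1$-perturbation. A contracting fixed-point argument then yields the desired $\sigma$, and injectivity of $h$ should follow from expansiveness of $\overleftarrow{f}$ on $\overleftarrow{\Omega}_f$ combined with a Robbin-style $d_f$-Lipschitz control on sections, of the type we constructed with Kocsard in \cite{BK13}.

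For the converse direction, I would follow the Ma\~n\'e--Aoki--Hayashi philosophy adapted to inverse limits. Inverse limit stability first forces every periodic point of $f$ to be hyperbolic, since a non-hyperbolic periodic orbit admits $C^1$-perturbations that alter the dimension of its local stable or unstable set, modifying the combinatorics of orbits in $\overleftarrow{M}_f$. A Ma\~n\'e-type ergodic closing lemma applied on $\overleftarrow{M}_f$ should upgrade this pointwise hyperbolicity on periodic orbits to a uniform hyperbolic splitting over $\Omega_f$, yielding axiom A. Finally, the weak transversality condition should be argued by contradiction: a tangency between $\pi_0|W^u(\underline{x})$ and $W^s_\epsilon(y)$ would, after a suitable $C^1$-perturbation exploiting the critical set, produce an uncountable family of distinct orbit classes in $\overleftarrow{M}_{f'}$ that no homeomorphism could match bijectively with those of $\overleftarrow{M}_f$.

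The main obstacle in both directions is the interaction between the critical set and the unstable lamination. In the forward direction, it is the regularity of the pseudo-invariant fields $E^u_i$ over filtration elements whose interior meets the critical set; in the backward direction, no natural $W^u$ is attached to a critical point of $f$ itself, which forces reasoning entirely inside $\overleftarrow{M}_f$ and a perturbation theory sensitive enough to detect tangencies visible only after the non-injective projection $\pi_0$. Genuinely new techniques adapted to endomorphisms with non-empty critical set seem needed for the converse, and this is likely the principal difficulty of the conjecture.
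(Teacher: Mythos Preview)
The statement you are attempting to prove is a \emph{conjecture}; the paper does not contain a proof of it. The paper explicitly records that only one direction has been established: Theorem~\ref{BK} (Berger--Kocsard) shows that axiom~A together with the weak transversality condition implies $C^1$-inverse limit stability, and the paper states plainly that ``the other direction is still open.'' So there is no ``paper's own proof'' to compare against for the full biconditional.

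For the forward direction, your sketch is in line with what the paper reports about the proof of Theorem~\ref{BK}: the argument follows the Robbin--Robinson strategy, lifted to the inverse limit, and the paper singles out as the main difficulty exactly what you identify --- the construction of pseudo-invariant plane fields $(E^s_i)_i$ and $(E^u_i)_i$ in the presence of a non-empty critical set. Your description of linearizing the conjugacy equation on sections of $\pi_0^*TM$, splitting along stable/unstable pieces, and closing with a $d_f$-Lipschitz fixed-point argument matches the Robbin scheme and the role of the Berger--Kocsard construction. So on this half you are consistent with the paper, though of course what you have written is an outline, not a proof.

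For the converse direction, there is a genuine gap: what you have is a research programme, not an argument. Invoking a ``Ma\~n\'e-type ergodic closing lemma applied on $\overleftarrow{M}_f$'' is precisely the step that is not known for endomorphisms with critical set; the Aoki--Moriyasu--Sumi result you may have in mind covers local diffeomorphisms, and extending closing/connecting techniques across the critical set is the heart of the problem. Likewise, your tangency-perturbation sketch (``produce an uncountable family of distinct orbit classes'') does not supply the actual perturbation, nor does it explain why inverse limit stability --- which only demands a homeomorphism of $\overleftarrow{M}_f$, a space that can absorb a great deal of topological change --- must fail. The paper itself offers no strategy here and leaves this direction open; your proposal does not close that gap.
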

However  in \cite{BR13}, we gave many evidences of veracity of this conjecture. Then in \cite{BK13} we showed one direction of this conjecture ; the other direction is still open.
\begin{thm}[Berger-Kocksard \cite{BK13}]\label{BK}
If a $C^1$-endomorphisms of a compact manifold satisfies  axiom A and the weak transversality condition, then it is inverse limit stable. 
\end{thm}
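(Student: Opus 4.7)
I would adapt the Robbin--Robinson proof of Theorem \ref{SSTRob} to the inverse limit, exploiting the fact that, whereas $f$ fails to be invertible, the shift $\arr f$ is always a homeomorphism of the compact metrizable space $\arr M_f$. Since $\pi_0 \circ \arr f = f \circ \pi_0$, it is enough to produce, for each $C^1$-small perturbation $f'$ of $f$, a continuous map $h_0 \colon \arr M_f \to M$ close to $\pi_0$ satisfying the one-step equation $f' \circ h_0 = h_0 \circ \arr f$: then $h(\underline x) := (h_0 \circ \arr f^{\,n}(\underline x))_{n\in\Z}$ automatically yields a continuous map $\arr M_f \to \arr M_{f'}$ intertwining the two shifts. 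Writing $h_0 = \exp \circ \sigma$ for a continuous section $\sigma$ of the pullback bundle $\pi_0^* TM$, this becomes a functional equation $\Phi_{f'}(\sigma) = 0$ which is $C^1$ in $(\sigma,f')$ and vanishes at $(0,f)$; its derivative at the origin is $\Psi = \mathrm{id} - T$, where $T(\sigma)(\underline x) = Df_{x_{-1}}\bigl(\sigma(\arr f^{-1}(\underline x))\bigr)$. The whole point is to construct a bounded left inverse $J$ of $\Psi$, so that $J \circ (\Psi - \Phi_{f'})$ becomes a contraction and its fixed point furnishes $\sigma$.

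To invert $\Psi$ I would split $\pi_0^* TM$ along the dynamics, imitating Proposition \ref{sectionEi}. Over $\arr \Omega_f$ the pullback bundle splits naturally as $\pi_0^* E^s \oplus E^u$: the stable summand depends only on the present coordinate $\pi_0(\underline x) \in \Omega_f$ (by the axiom A-Prz structure of $\Omega_f$), while the unstable summand $E^u(\underline x) := D\pi_0\bigl(T_{\underline x} W^u(\underline x;\arr f)\bigr)$ genuinely depends on the whole past orbit; $Df$ contracts $\pi_0^* E^s$ and expands $E^u$ uniformly, the expansion being an honest isomorphism because the unstable directions live on the inverse limit (where the dynamics is invertible). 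Axiom A combined with the weak transversality condition implies the no-cycle property (by lifting the usual inclination argument to $\arr M_f$) and thus yields an adapted filtration $(M_i)$; I would extend $\pi_0^* E^s|\arr \Omega_i$ and $E^u|\arr \Omega_i$ to continuous complementary subbundles $E^s_i, E^u_i$ of $\pi_0^* TM$ over $\arr N_i := \pi_0^{-1}(M_i \setminus M_{i-1})$ with the $T$-pseudo-invariance property analogous to Proposition \ref{sectionEi}. The decisive place where weak transversality enters is exactly here: on a transit orbit $\underline x$ whose forward iterates accumulate on $\Omega_j$ ($j<i$), the extensions $E^u_i(\underline x)$ and $E^s_j(\pi_0(\underline x))$ must together span $T_{\pi_0(\underline x)} M$, which is guaranteed precisely by the transversality of $\pi_0\bigl(W^u(\underline x;\arr f)\bigr)$ to $W^s_\epsilon(\Omega_j)$ and propagated back by the dynamics. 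A partition of unity $(\gamma_i)$ subordinated to $(\arr N_i)$ then allows one to set
\[
J(\sigma) \;=\; \sum_i \sum_{n\ge 0} T^{\,n}(\gamma_i\, p^s_i\, \sigma) \;-\; \sum_i \sum_{n\ge 1} T^{-n}(\gamma_i\, p^u_i\, \sigma),
\]
where $p^{s}_i, p^{u}_i$ are the bounded projectors onto $E^s_i, E^u_i$; the series converges in $C^0$ by the hyperbolic rates and satisfies $J\circ \Psi = \mathrm{id}$.

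The contraction mapping theorem then produces a continuous section $\sigma$ and hence a semi-conjugacy $h \colon \arr M_f \to \arr M_{f'}$; its injectivity, and so inverse limit stability, follows from Robinson's trick of keeping $\sigma$ $d_f$-Lipschitz with small constant, combined with the usual expansiveness of $\arr f$. The technical ingredient needed here is the existence of $d_f$-Lipschitz extensions of the stable and unstable plane fields on $\arr M_f$, which is precisely the content announced in \cite{BK13} at the end of \textsection \ref{hypimpliesstab2}, and the same construction should transfer to the present setting. I expect the main obstacle to be the construction of the unstable extension $E^u_i$ near the critical set of $f$: on $M$ itself the unstable direction can genuinely degenerate along the critical locus, and the whole point of passing to $\arr M_f$ is that the lifted distribution $D\pi_0(TW^u(\underline x;\arr f))$ remains continuous of constant rank; weak transversality is then the minimal hypothesis that lets one couple this lifted $E^u_i$ with the honest stable direction $E^s_j$ of a neighbouring piece in order to recover a full splitting of $\pi_0^* TM$ and so run the Robbin--Robinson scheme.
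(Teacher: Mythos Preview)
Your proposal is correct and follows exactly the line the paper indicates. The paper itself gives only a one-sentence sketch: the proof ``follows the strategy of the Robbin structural stability theorem'', with ``the main difficulty [being] the construction of pseudo-invariant plane fields $(E_i^s)_i$ and $(E_i^u)_i$, for the endomorphisms display in general a non-empty critical set''. You have unpacked precisely this scheme --- sections of $\pi_0^*TM$ over $\arr M_f$, the left inverse $J$ built from a stable/unstable splitting extended over a filtration, weak transversality as the condition making $E^u_i$ and $E^s_j$ span on transit orbits, and the critical-set obstruction to extending $E^u_i$ --- so there is nothing to add on the level of strategy. One small point: injectivity of $h$ alone does not give a homeomorphism onto $\arr M_{f'}$, since the two inverse limits are a priori different spaces; the standard remedy is to run the same construction from $f'$ to $f$ and check that the two semi-conjugacies compose to the identity (uniqueness of the $d_f$-Lipschitz fixed point does this), which you should mention explicitly.
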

The proof of this theorem follows the strategy of the Robbin structural stability theorem. The main difficulty is the construction of pseudo-invariant plan fields $(E_i^s)_i$  and $(E_i^u)_i$, for the endomorphisms display in general a non-empty critical set.  

\begin{figure}[h!]
	\centering
		\includegraphics[width=9cm]{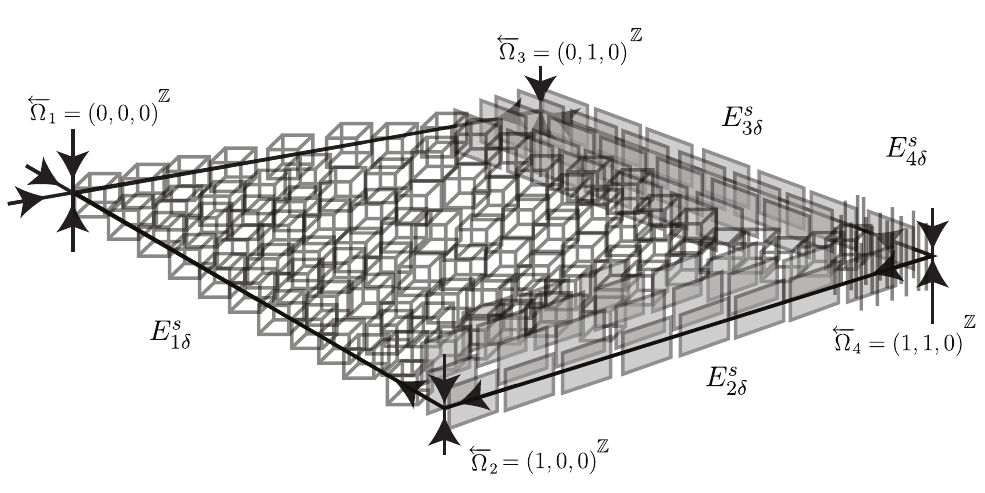}
	\caption{Construction of $(E_i^s)_i$ for the map $(x,y,z)\mapsto (x^2,y^2,0)$ }
\end{figure}
\begin{figure}[h!]
	\centering
		\includegraphics[width=9cm]{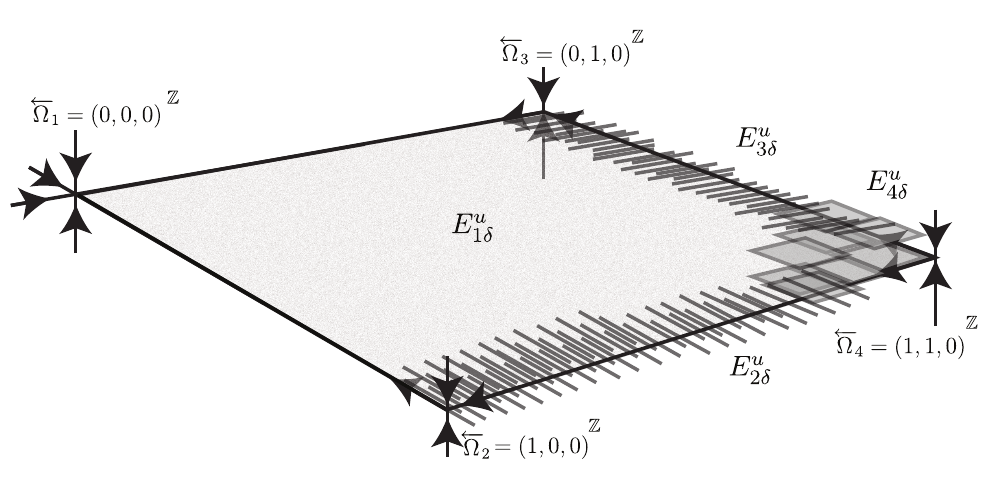}
	\caption{Construction of $(E_i^u)_i$ for the map $(x,y,z)\mapsto (x^2,y^2,0)$}
\end{figure}

\section{Links between structural stability in dynamical systems and  singularity theory}\label{sectionLinks}
In the last section we saw how the inverse stability does not seem to  involve any singularity theory. 
However let us notice that if a $C^\infty$-endomorphism of a manifold $M$ is structurally stable (that  is conjugated to its perturbation via a homeomorphism of $M$), then its singularities are \emph{$C^0$-equivalently, structurally stable}:
 \begin{defi}
 Let $f$ be a $C^\infty$-map from a manifold $M$ into a possibly different manifold $N$ and $r\in \{0,\infty\}$. The map $f$ is $C^r$-\emph{equivalently, structurally stable} if for every $f'$ $C^\infty$-close to $f$, there are  $h\in Diff^r(M)$ and $h'\in Diff^r(N)$ which are $C^r$-close to the identity and such that the following diagram commutes:
\[\begin{array}{lcccr} 
&&f'&&\\
 &M&\rightarrow &N&\\
 h&\uparrow &&\uparrow&h'\\
 &M&\rightarrow&N&\\
 &&f&&\end{array}.\]
\end{defi}

The equivalently, structural stability has been deeply studied, in particular by Whitney, Thom and Mather. We shall recall some of the main results, by emphasizing their similarities with those of structural stability in dynamical systems. 

 \subsection{Infinitesimal stability}
Let $M, N$ be compact manifolds.  For $r\in \{0,\infty\}$, let $\chi^r(M)$ and $\chi^r(N)$ be the space of $C^r$-sections of respectively $TM$ and $TN$.

 \begin{defi}
A Diffeomorphism  $f\in Diff^1(M)$ is $C^0$-\emph{infinitesimally stable} if the following map is surjective:
\[\sigma\in \chi^0(M)\mapsto Tf\circ \sigma- \sigma\circ f\in \chi^0(f),\]
with $\chi^0(f)$ the space of continuous sections of the pull back bundle  $f^*TM$. 
\end{defi} 
  In the Robbin-Robinson proofs of structural stability (Theorem \ref{SSTRob}), we saw the importance of the left-invertibility of $\sigma\mapsto Tf\circ \sigma- \sigma\circ f$. The latter implies the  $C^0$-{infinitesimal stability} which is equivalent to the $C^1$-structural stability:
\begin{thm}[Robin-Robinson-Ma\~ne \cite{Ro71},\cite{Ro76}, \cite{Ma88}]
The $C^0$-{infinitesimally stable} diffeomorphisms are the $C^1$-equivalently stable maps.
\end{thm}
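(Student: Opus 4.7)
The plan is to establish the two implications separately.

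For $C^0$-infinitesimal stability $\Rightarrow$ $C^1$-structural stability, the approach is the Robbin--Robinson scheme sketched for Theorem~\ref{SSTRob}. I would set $\Phi(\sigma, f') = \sigma - \tilde f' \circ \sigma \circ f^{-1}$ on $\Gamma \times C^1(M,M)$, so that zeroes of $\Phi$ yield semi-conjugacies $h = \exp\circ \sigma$. Under the identification of $\chi^0(f)$ with $\chi^0(M) = \Gamma$ via pre-composition with $f^{-1}$, the infinitesimal operator $L\sigma = Tf\circ\sigma - \sigma\circ f$ coincides, up to sign, with $\Psi = \partial_\sigma \Phi(0,f)$. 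Infinitesimal stability then gives surjectivity of $\Psi$, and the open mapping theorem supplies a bounded right inverse $J$. Rewriting $\Phi_{f'}(\sigma)=0$ as $\sigma = J\circ (\Psi - \Phi_{f'})(\sigma)$ gives a fixed-point equation in which the right-hand side is a contraction for $f'$ close to $f$. A $d_f$-Lipschitz refinement of $J$, as in Robbin, and Robinson's smoothing trick for the $C^1$ case, ensures the fixed point $\sigma$ produces a $d_f$-Lipschitz section and hence an injective $h = \exp\circ\sigma$, i.e.\ a genuine topological conjugacy.

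For the reverse implication, the plan is to route through Ma\~n\'e's theorem \cite{Ma88}: $C^1$-structural stability forces axiom A and the strong transversality condition. Once this structural information is in hand, the explicit operator $J$ of Robbin, assembled from the pseudo-invariant splittings $E^s_i \oplus E^u_i$ and the partition of unity $(\gamma_i)_i$ subordinated to the filtration of Proposition~\ref{sectionEi}, provides a two-sided inverse of $\Psi$: a telescoping computation exploiting the inclusions $Df(E^s_i)\subset E^s_j$ and $Df(E^u_i)\supset E^u_j$ for $j\le i$ identifies the formal series defining $J$ with the Neumann series on the stable and unstable sectors, so that both $J\circ\Psi = \mathrm{id}$ and $\Psi\circ J = \mathrm{id}$ hold up to boundary terms that are absorbed by the filtration. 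Consequently $\Psi$, and hence $L$, is surjective, so $f$ is $C^0$-infinitesimally stable.

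The main obstacle is the regularity of the right inverse in the forward direction: surjectivity of $L$ alone is an abstract statement that does not come equipped with a right inverse preserving $d_f$-Lipschitz sections, yet this preservation is precisely what forces the semi-conjugacy $h$ to be injective via the argument used for Theorem~\ref{SSTRob}. Overcoming this is the technical core of the Robbin--Robinson work, and is where the hypothesis must ultimately be unpacked into a concrete splitting of the dynamics. In the reverse direction, the substance is concentrated in Ma\~n\'e's theorem, an involved argument well beyond the implicit function theorem framework used elsewhere in this section.
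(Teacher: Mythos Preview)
The paper does not give a proof of this theorem; it is stated with citations to \cite{Ro71}, \cite{Ro76}, \cite{Ma88} and a one-line pointer back to the sketch of Theorem~\ref{SSTRob}. So there is no detailed argument in the paper to compare against, only the implicit route suggested by those references: both implications pass through the characterization ``axiom A $+$ strong transversality''.

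Your backward direction is fine and matches that route: $C^1$-structural stability $\Rightarrow$ axiom A $+$ strong transversality (Ma\~n\'e), and then Robbin's explicit operator $J$ built from the filtration and the pseudo-invariant splittings shows that $\Psi$ is surjective, hence $f$ is $C^0$-infinitesimally stable.

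Your forward direction, however, has a genuine gap that you yourself flag but do not close. Abstract surjectivity of $L$ (equivalently of $\Psi$) gives, via the open mapping theorem, only \emph{some} bounded right inverse $J$. The fixed-point argument then produces a continuous semi-conjugacy $h=\exp\circ\sigma$, but the injectivity of $h$ in the Robbin--Robinson scheme is obtained from the $d_f$-Lipschitz property of $\sigma$, and that property is inherited from $J$ only when $J$ is the concrete operator built out of the hyperbolic splittings of Proposition~\ref{sectionEi}. An arbitrary bounded right inverse has no reason to preserve $d_f$-Lipschitz sections, and without that the argument for injectivity collapses. Your remark that ``the hypothesis must ultimately be unpacked into a concrete splitting of the dynamics'' is exactly right, but that unpacking \emph{is} the theorem: one must show that $C^0$-infinitesimal stability forces axiom A and strong transversality, and only then invoke Robbin--Robinson to get structural stability. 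This step is again part of Ma\~n\'e's contribution (infinitesimal stability already forces hyperbolicity of periodic orbits, absence of cycles, etc.), not a consequence of the functional-analytic framework alone. In short, the forward implication cannot be run as a direct implicit-function-theorem argument from the bare surjectivity hypothesis; it too must be routed through the geometric characterization.
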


A similar definition exists in Singularity Theory:
\begin{defi}
Let $f\in C^\infty(M,N)$ is $C^\infty$-\emph{ equivalently infinitesimally stable} if 
 the following map is surjective:
\[(\sigma, \xi)\in \chi^\infty(M)\times \chi^\infty(N)\mapsto Tf\circ \sigma- \xi\circ f\in \chi^\infty(f)\]
with $\chi^\infty(f)$ the space of $C^\infty$-sections of the pull back bundle  $f^*TM$. 
\end{defi}
It turns out to be equivalent to the $C^\infty$-equivalent stability.
\begin{thm}[Mather \cite{Ma170,Ma270,Ma370,Ma470,Ma570}]
The $C^\infty$-{ infinitesimally equivalently stable} maps are the $C^\infty$-equivalently stable maps.
\end{thm}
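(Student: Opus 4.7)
The plan is to handle the two implications separately; the forward direction is conceptually elementary, and the reverse is the deep content of Mather's program.

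For equivalently stable $\Rightarrow$ infinitesimally stable: I would differentiate the group action. Let $\mathcal{G} := \mathrm{Diff}^\infty(M)\times\mathrm{Diff}^\infty(N)$ act on $C^\infty(M,N)$ by $(h,h')\cdot g := h'\circ g\circ h^{-1}$. Equivalent stability of $f$ says the $\mathcal{G}$-orbit of $f$ contains a $C^\infty$-neighborhood of $f$. For any tangent vector $\tau\in\chi^\infty(f)$, realize it by a smooth curve $f_t$ with $f_0=f$; stability then provides smooth paths $h_t,h'_t$ in $\mathcal{G}$ starting at $\mathrm{id}_M$, $\mathrm{id}_N$ with $h'_t\circ f\circ h_t^{-1}=f_t$. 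Differentiating at $t=0$ yields
\[
\tau \;=\; \dot h'_0\circ f \;-\; Tf\circ\dot h_0,
\]
which puts $\tau$ (up to the paper's sign convention) in the image of $(\sigma,\xi)\mapsto Tf\circ\sigma-\xi\circ f$; hence infinitesimal stability. The only subtlety is smooth dependence of $(h_t,h'_t)$ on $t$, which one gets from a Fréchet submersion argument for the orbit map.

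For infinitesimally stable $\Rightarrow$ equivalently stable: I would apply the path method, in direct analogy with the Robbin--Robinson argument recalled in \textsection\ref{hypimpliesstab2}. Given $f'$ close to $f$, interpolate by $f_t := (1-t)f + tf'$ and look for time-dependent vector fields $\sigma_t\in\chi^\infty(M)$, $\xi_t\in\chi^\infty(N)$ whose flows $h_t,h'_t$ satisfy $h'_t\circ f\circ h_t^{-1}=f_t$. Differentiating this identity in $t$ and transporting by $h_t$ reduces the problem to solving, at each $t$, the homological equation
\[
\dot f_t \;=\; T f_t \circ \widetilde{\sigma}_t \;-\; \widetilde{\xi}_t \circ f_t
\]
for $\widetilde{\sigma}_t,\widetilde{\xi}_t$ depending continuously on $t$, after which $(\sigma_t,\xi_t)$ are recovered by pullback along $h_t$ and integrated to isotopies.

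The principal obstacle, and the technical heart of Mather's theorem, is producing solutions of this homological equation with parameters. Infinitesimal stability supplies only set-theoretic surjectivity at the single map $f$, with no a priori bounded right inverse and no continuous dependence on the perturbed map $f_t$. Mather's strategy is to (i) localize near each multi-singularity of $f$; (ii) use infinitesimal stability to show finite $\mathcal{A}$-determinacy of the multi-germ, reducing equivalence to a condition on jets of bounded order; and (iii) invoke the Malgrange preparation theorem to convert algebraic surjectivity of a map between finitely generated modules over the ring of germs into genuine $C^\infty$-surjectivity that varies smoothly in parameters. Once preparation yields a smooth right inverse on each multi-germ, a partition of unity and Gronwall-type estimates patch the local data into global vector fields and integrate them to the desired isotopies. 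The Malgrange preparation theorem is the singularity-theoretic ingredient with no counterpart in the Robbin--Robinson scheme, and it is where I expect the real work to lie.
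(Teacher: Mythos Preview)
The paper does not prove this theorem: it is stated with citations to Mather's series of papers and immediately followed by the remark that the infinitesimal criterion is easy to check in examples. There is therefore no proof in the paper to compare your proposal against.

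On the substance of your sketch: the outline of the hard direction (infinitesimal stability $\Rightarrow$ stability) is a faithful summary of Mather's program. The path method reducing to a homological equation with parameters, the localization to multigerms, finite $\mathcal A$-determinacy, and the Malgrange preparation theorem to upgrade algebraic surjectivity to a smooth right inverse are precisely the ingredients of Mather II--V, and your analogy with the Robbin--Robinson left-inverse scheme of \textsection\ref{hypimpliesstab2} is the point the lecture notes are making.

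Your forward direction, however, has a genuine gap. Openness of the $\mathcal G$-orbit in $C^\infty(M,N)$ does \emph{not}, in the Fr\'echet setting, yield smooth (or even continuous) curves $(h_t,h'_t)$ lifting a curve $f_t$: there is no implicit function theorem available, and the phrase ``a Fr\'echet submersion argument for the orbit map'' is exactly the missing step rather than a justification. Mather does not argue this way. One correct route is to pass to unfoldings: regard $(f_t)_t$ as a map $F\colon M\times I\to N\times I$ over $I$, show that stability of $f$ forces $F$ to be level-preserving trivializable near $t=0$ (this uses the local theory: stable multigerms are finitely determined, so the problem reduces to finite jets where ordinary calculus applies), and then differentiate the trivialization. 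Another route, also due to Mather, is indirect: infinitesimal stability is characterized by a multijet transversality condition and is $\mathcal A$-invariant, so one approximates within the open orbit. Either way, the step you labelled a ``subtlety'' is where the content of this direction actually lives.
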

The latter might sound complicated to verify, but on concrete examples it is rather easy to check. That is why following Mather, it is a satisfactory description of $C^\infty$-equivalently structurally stable maps. 

 \subsection{Density structurally stable maps}

Let us point out two similar results on structural stability:
\begin{thm} [Thom, Mather \cite{Ma73, Ma76, GWPL}]\label{C0ESS}
For every manifolds $M,N$, the $C^0$-equivalently structural stable maps form an open and dense set in $C^\infty(M,N)$.
\end{thm}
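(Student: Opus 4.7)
The plan is to follow the classical Thom--Mather strategy for topological stability: reduce $C^0$-equivalent stability to a single transversality condition at the level of $k$-jets, show that this condition is open and dense by Thom's transversality theorem, and show that it forces topological triviality by invoking Thom's isotopy lemmas. Since $M$ and $N$ are compact, all reasonable $C^\infty$-topologies agree, which simplifies the bookkeeping considerably.

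First I would fix an integer $k$ large enough relative to $\dim M$ and $\dim N$ and construct a locally finite, Whitney $(a),(b)$-regular stratification $\mathcal{S}$ of the jet space $J^k(M,N)$, invariant under the natural action of $\Diff^k(M)\times \Diff^k(N)$ on source and target jets. In Mather's so-called nice dimensions, $\mathcal{S}$ can be taken to be the orbit stratification by $\mathcal{A}$-equivalence classes of $k$-jets, but in general it must be built by hand from the algebraic geometry of finitely $\mathcal{K}$-determined germs, together with a compatible system of control data (tubular neighborhoods, tube functions, and commuting retractions) in Mather's sense.

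Next, Thom's transversality theorem applied stratum by stratum to $\mathcal{S}$ yields that the set
\[
U := \{\, f\in C^\infty(M,N) : j^kf \pitchfork \mathcal{S} \,\}
\]
is residual; local finiteness of $\mathcal{S}$ and compactness of $M$ upgrade residuality to openness and density. For the stability step, given $f\in U$ and $f'$ sufficiently $C^\infty$-close, one considers a smooth homotopy $F\colon M\times[0,1]\to N$ joining them (for example $F(x,t)=\exp_{f(x)}(t\cdot \exp_{f(x)}^{-1}f'(x))$). Joint transversality of the jets $j^kF_t$ to $\mathcal{S}$ pulls $\mathcal{S}$ back to a Whitney regular stratification of $M\times[0,1]$, compatible with the projection onto $[0,1]$; the same happens on $N\times[0,1]$ via $F$. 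Thom's first isotopy lemma then lifts the canonical vector field on $[0,1]$ to \emph{controlled} vector fields on $M\times[0,1]$ and on $N\times[0,1]$; their time-$1$ flows provide the homeomorphisms $h\colon M\to M$ and $h'\colon N\to N$ conjugating $f$ to $f'$, close to the identity by the size of the perturbation.

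The main obstacle is Step 1 together with the proof that $j^kf\pitchfork \mathcal{S}$ really implies topological triviality along strata: this is the deep geometric content of Mather's work and the reason one must use controlled vector fields rather than ordinary integration. Upgrading residuality of $U$ to openness also hinges on local finiteness of $\mathcal{S}$, which outside the nice dimensions is itself nontrivial, and which at the level of $C^\infty$-equivalence can actually fail---a phenomenon that is precisely why one must work with $C^0$-equivalence here and why the theorem is specifically about topological stability.
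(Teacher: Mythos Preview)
The paper does not actually prove this theorem; it is quoted with a citation to \cite{Ma73, Ma76, GWPL}, and the only information the paper gives about its proof is that the key step is the Thom--Mather stratification theorem stated in \S5.4, whose proof ``involves in particular the jet space, Thom's transversality theorem and Whitney stratifications in semi-analytic geometry''. Your outline is entirely consistent with that description and is indeed the classical Thom--Mather strategy, so at the level of approach there is nothing to compare: you have reconstructed the intended route.

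There is, however, one genuine gap in your stability step. You write that ``Thom's first isotopy lemma then lifts the canonical vector field on $[0,1]$ to controlled vector fields on $M\times[0,1]$ and on $N\times[0,1]$; their time-$1$ flows provide the homeomorphisms $h$ and $h'$ conjugating $f$ to $f'$.'' The first isotopy lemma, applied separately to the two stratified projections $M\times[0,1]\to[0,1]$ and $N\times[0,1]\to[0,1]$, does produce controlled lifts and hence homeomorphisms $h$ and $h'$, but nothing forces the relation $h'\circ f = f'\circ h$: the two lifted vector fields are not a priori $F$-related. What you need here is Thom's \emph{second} isotopy lemma, which is precisely the statement that a proper stratified map $F\colon M\times[0,1]\to N\times[0,1]$ over $[0,1]$ is locally topologically trivial \emph{as a map}, provided the stratification of the source satisfies the Thom regularity condition $(a_F)$ relative to $F$. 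This is the condition ensuring one can choose the controlled lift on $M\times[0,1]$ to be $F$-related to the one on $N\times[0,1]$. You never mention $(a_F)$, yet verifying it for the pulled-back stratification is a substantial part of Mather's argument and is exactly where the careful construction of the canonical stratification of $J^k(M,N)$ (via $\mathcal K$-orbits and the associated control data) earns its keep. Fixing this amounts to replacing ``first isotopy lemma'' by ``second isotopy lemma'' and adding the hypothesis that the stratification you pull back is Thom-regular with respect to $F$; once that is said, your sketch is correct.
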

Let us recall:
\begin{thm}[Ma\~ne-Sad-Sullivan \cite{MSS}, Lyubich \cite{Ly84}]
For every $d\ge 2$, the set of structurally stable rational functions is open and dense. 
\end{thm}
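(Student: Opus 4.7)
The plan is to upgrade the open-dense $\Omega$-stability already obtained to open-dense structural stability, by extending the topological conjugacy from the non-wandering set -- which, for a rational function, coincides with the Julia set $J$ -- to all of $\hat{\mathbb{C}}$. Fix throughout a connected component $U$ of the open dense set of $\Omega$-stable rational maps of degree $d$ and a base point $f_0\in U$. By Lemma \ref{lambda1} the holomorphic motion $\lambda\mapsto \varphi_\lambda(p)$ is well defined at every $p\in J^*_0= J_0$ and satisfies $\varphi_\lambda\circ f_0= f_\lambda\circ \varphi_\lambda$ on $J_0$. I apply S\l odkowski's extension theorem to extend $(\varphi_\lambda)_{\lambda\in U}$ to a holomorphic motion $\Phi_\lambda\colon \hat{\mathbb{C}}\to \hat{\mathbb{C}}$ by quasi-conformal homeomorphisms; this realizes the required conjugacy on $J_0$ but a priori is not equivariant on the Fatou set $F_0$.

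Next, I use Sullivan's no-wandering-domains theorem to reduce Fatou-set equivariance to a finite list of periodic cycles of components, each of attracting, super-attracting, parabolic, Siegel or Herman type by Fatou's classification. Multipliers of periodic cycles vary holomorphically in $\lambda\in U$, and $\Omega$-stability forbids the creation of new attracting or indifferent cycles, so no multiplier modulus can cross $1$ and indifferent multipliers are locally constant. Hence the type of each periodic Fatou cycle is constant along $U$. After removing a relatively closed, nowhere-dense subset, I may further assume that no parabolic cycle is present, since parabolic cycles bifurcate under arbitrarily small perturbations and so cannot occur in the interior of the structurally stable locus.

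On each attracting or super-attracting basin I choose a fundamental annulus (or disk) for the first-return map. The local model near the periodic point is $z\mapsto \mu z$ or $z\mapsto z^k$, with $\mu$ (respectively the local degree) persistent throughout $U$, so the conjugacy already defined on the boundary -- the periodic point and its iterated preimages lying in $J_0$ -- extends across the fundamental domain by elementary topology and then propagates equivariantly to the whole basin. On Siegel disks and Herman rings, constancy of the rotation number yields a canonical conjugacy in linearizing coordinates. Iterating backward under $f_0$ transports the conjugacy to every pre-periodic Fatou component, and gluing with $\varphi_\lambda$ on $J_0$ produces a global topological conjugacy $h_\lambda$.

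The hard part is to check that these local conjugacies and the motion on $J_0$ paste together into an actual homeomorphism of $\hat{\mathbb{C}}$. Matching boundary behavior across the Julia--Fatou interface, and in particular matching the linearizing coordinates near indifferent periodic points with the holomorphic motion on $J_0$, requires quasi-conformal rigidity tools (use of the measurable Riemann mapping theorem to realize the extension as a solution of a Beltrami equation with controlled dilatation). Once this is achieved, open-density of structurally stable rational functions follows from the previously proven open-density of $\Omega$-stable maps together with the density, inside each connected $\Omega$-stable component, of parabolic-free parameters.
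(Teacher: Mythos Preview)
The paper does not actually prove this theorem: in Section~\ref{sectionLinks} it is stated without argument, and the discussion in Section~\ref{secStabimplieshyp} stops at $\Omega$-stability, merely remarking that ``this result enables them to deduce a stronger result: the density of the set of structurally stable rational functions.'' So there is no proof in the paper to compare against, but your proposal nonetheless contains a genuine gap.

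You try to conjugate $f_0$ to \emph{every} $f_\lambda$ in the $\Omega$-stable component $U$, and this is simply false. Take $f_0(z)=z^2$ and $f_\lambda(z)=z^2+\lambda$ for small $\lambda\neq 0$: both lie in the same $J$-stable component, yet the critical point $0$ is periodic for $f_0$ and not for $f_\lambda$. Since branch points of a branched covering are topological invariants, the two maps are not conjugate; in particular your claim that ``the local degree [at a superattracting cycle] is persistent throughout $U$'' is wrong. What is missing is precisely the heart of the Ma\~n\'e--Sad--Sullivan argument: inside $U$ the critical points lying in the Fatou set move holomorphically, and one must remove the proper analytic subvarieties cut out by the critical grand-orbit relations $f_\lambda^{\,n}(c_i(\lambda))=f_\lambda^{\,m}(c_j(\lambda))$. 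Only off these loci is the combinatorics of critical orbits rigid, and only then does your fundamental-domain construction on attracting basins make sense.

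Two smaller remarks. Your treatment of parabolic basins, Siegel disks and Cremer points is superfluous: weak stability means that every periodic point of every $f_\lambda$ with $\lambda\in U$ is hyperbolic, so none of these objects occur anywhere in $U$ and there is nothing to ``remove''. And the appeal to S\l odkowski's theorem is both anachronistic (it postdates \cite{MSS,Ly84}) and idle: as you yourself concede, the extension it yields is not equivariant on the Fatou set, so it contributes nothing to the construction of the global conjugacy.
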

In both cases, we do not know  how to describe these structurally stable maps. 

Still the axiom A condition is a candidate to describe the structurally stable rational functions, since the famous Fatou conjecture (1920).  
On the other hand, there is not even a conjecture for the description of the  $C^0$-equivalently structural stable maps.
 
Following Mather, a nice way to describe the equivalently structural stable maps would be (a similar way to) the $C^\infty$-equivalently infinitesimal stability.

 Nevertheless,  Mather proved that $C^\infty$-equivalently infinitesimal stable maps  are  dense if and only if the dimensions of $M$ and $N$ are not ``nice" \cite{Ma670}. We define the nice dimensions below. Thus one has to imagine a new criteria (at least of in ``not nice" dimensions) to describe the $C^0$-equivalently structural stable maps. 

\begin{defi}[Nice dimensions]
If $m=dim \, M$ and $n= dim\, N$, the pair of dimensions $(m;n)$ is nice if and only if one of the following conditions holds:
\[\begin{array}{rcl}
n-m \ge  4 & \text{and}&  m<\frac 67 n+\frac 8 7,\\
3\ge n- m \ge  0 & \text{and}& m< \frac 6 7 n +\frac 9 7,\\
 n- m =-1& \text{and}& n <8,\\
 n- m =-2& \text{and}& n <6,\\
 n- m =-3& \text{and}& n <7.\end{array}\]
\end{defi}
We notice that if  $n:=dim\,M=dim\,N$, then the pair of dimensions $(m;n)$ is nice if and only if $n\le 8$.

Let us finally recall an open question: 
\begin{prob}
In nice dimensions,  does a 
$C^0$-equivalently structurally stable map is always $C^\infty$-equivalently structurally stable map? 
\end{prob}
\subsection{Geometries of the structural stability}
The proof of the Thom-Mather Theorem \ref{C0ESS} on the density of $C^0$-equivalently structurally stable involves the concept of stratification (by analytic or smooth submanifolds). 

Similarly, the set of stable and unstable manifolds of an axiom A diffeomorphisms form a stratification of laminations, as defined in \cite{BeMem}.  

Let us recall these definitions.

\subsubsection{Stratifications}

A \emph{ stratification} is the pair of a locally compact subset $A$ and a locally finite partition $\Sigma$ by locally compact subsets $X\subset A$, called {\emph strata}, and satisfying: 
  \[\forall (X,Y)\in \Sigma^2,\; cl(X)\cap Y\not=\emptyset\Rightarrow cl(X)\supset Y\; .\]
\[\mathrm{We\; write \; then \;} X\ge Y\; .\]

In practical, the set $A$ will be embedded into a manifold $M$, and the strata $X$ will be endowed with a structure of analytic manifold, differentiable manifold or even lamination, depending on the context. 

\subsection{Whitney Stratification}
The first use of stratification goes back to the work of Whitney to describe the algebraic  varieties. Then it has been generalized by Thom 
and Lojasiewicz for the study of analytic variety and even semi-analytic variety. 

\begin{defi}
\emph{An analytic variety} of $\R^n$  is  the zero set of an analytic function on an open subset of $\R^n$. \emph{An analytic submanifold} is a submanifold which is also an analytic variety. \emph{A semi-analytic variety} is a subset $A$ of  $\R^n$ which is covered by open subset $U$ satisfying:
\[A\cap U =  \cap_{i=1}^N\cup_{i=1}^N F_{ij}\]
with $F_{ij}$ of the form $\{q_{ij} >0\}$ or $\{q_{ij} =0\}$ and $q_{ij}$ a real analytic function on $U$. 
\end{defi}

\begin{thm}[Whitney-Lojasiewicz \cite{Lo70}]
Any semi-analytic variety $S\subset R^n$ splits into a stratification $\Sigma$ by analytic manifolds.
\end{thm}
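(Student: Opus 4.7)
\medskip

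\noindent\textbf{Proof proposal.} The plan is to proceed by induction on the analytic dimension $d$ of $S$, working locally and stratifying the top-dimensional smooth part first, then the residual semi-analytic set of strictly smaller dimension. The base case $d=0$ is trivial, as a zero-dimensional semi-analytic set is a locally finite collection of points, each of which is an analytic $0$-manifold satisfying the frontier condition vacuously.

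First I would set up the appropriate toolkit on semi-analytic sets. The crucial facts to establish (or invoke from Lojasiewicz) are: (i) the class of semi-analytic subsets of $\R^n$ is closed under finite unions, finite intersections, and complements taken within an open set; (ii) the closure, interior, and boundary (within an ambient open set) of a semi-analytic set are again semi-analytic; (iii) a semi-analytic set admits a well-defined local dimension at each point, which is upper semi-continuous in $x\in S$; and (iv) the regular locus
\[
\mathrm{Reg}_d(S):=\bigl\{x\in S:\ S\text{ is, near }x,\text{ an analytic submanifold of dimension }d\bigr\}
\]
is an open subset of $S$, and $\mathrm{Sing}_d(S):=S\setminus\mathrm{Reg}_d(S)$ is a semi-analytic set of dimension strictly less than $d$. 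Point (iv) is the analytic-geometry analogue of the fact that the singular locus of an algebraic variety is a proper subvariety, and it is the statement that does the real work in the induction.

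Next I would carry out the induction. Given a semi-analytic $S$ of dimension $d$, partition $\mathrm{Reg}_d(S)$ into its connected components; by (iv), each is an analytic $d$-manifold, and local finiteness of the collection follows from the locally finite nature of the defining inequalities (one may shrink $U$ in the definition to ensure only finitely many components meet any compact set). The complement $S':=\mathrm{Sing}_d(S)$ is semi-analytic of dimension $<d$, and the inductive hypothesis, applied not to $S'$ alone but jointly to the finite family of closures of the top strata intersected with $S'$, yields a compatible stratification of $S'$ into analytic manifolds. Concretely, one invokes the strengthened inductive statement: for any finite collection $\{A_\alpha\}$ of semi-analytic subsets of dimension $<d$, there is a common semi-analytic stratification refining all of them. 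This enhanced statement is what one really proves by induction; it is necessary precisely so that the frontier condition can be imposed in the inductive step.

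The main obstacle, as suggested, will be verifying the frontier condition $\mathrm{cl}(X)\cap Y\ne\emptyset \Rightarrow \mathrm{cl}(X)\supset Y$ across strata of different dimensions. The difficulty is that a connected component $X$ of $\mathrm{Reg}_d(S)$ has semi-analytic frontier $\partial X\subset\mathrm{Sing}_d(S)$, but a priori $\partial X$ need not be a union of connected components of $\mathrm{Reg}_{d'}(\mathrm{Sing}_d(S))$ for $d'<d$. The fix is to feed the finite family $\{\mathrm{cl}(X):X\text{ top stratum}\}$ into the strengthened inductive hypothesis, so that the strata of $\mathrm{Sing}_d(S)$ are chosen subordinate to these closures; any stratum $Y$ of $\mathrm{Sing}_d(S)$ then either lies entirely in $\mathrm{cl}(X)$ or is disjoint from it. Local finiteness is preserved at each step because only finitely many sets enter the refinement, and the base change of dimension terminates the induction in at most $n$ steps. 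Gluing the local stratifications into a global one on $\R^n$ is then routine, using a locally finite refinement of the cover by neighborhoods on which the construction was carried out.
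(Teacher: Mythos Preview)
The paper does not give its own proof of this theorem: it is stated with a citation to \cite{Lo70} and then used as background for the discussion of Thom--Mather stratifications. So there is nothing in the paper to compare your argument against line by line.

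That said, your outline is the classical route, essentially the one taken by Lojasiewicz in the cited reference: induct on dimension, peel off the top-dimensional regular locus as a union of analytic manifolds, observe that the residual singular locus is semi-analytic of strictly smaller dimension, and recurse. You correctly identify the two nontrivial ingredients --- that $\mathrm{Sing}_d(S)$ is again semi-analytic and of dimension $<d$, and that the inductive statement must be strengthened to a simultaneous stratification of a finite family so that the frontier condition can be enforced. Both points are genuine and are exactly where the work lies in \cite{Lo70}. The sketch is sound as a plan; the only place I would flag as understated is the final ``gluing the local stratifications into a global one is then routine'': compatibility of locally constructed stratifications on overlaps requires some care (one typically passes to a common refinement, again using the strengthened inductive statement), but this is a standard maneuver once the rest is in place.
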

  One important property of the semi-analytic category is its stability by projection from the Seidenberg Theorem: given any projection $p$ of $\R^n\to \R^p$, the image by $p$ of any  semi-analytic variety is a semi-analytic variety.

  \begin{figure}
	\centering
		\includegraphics[width=6cm]{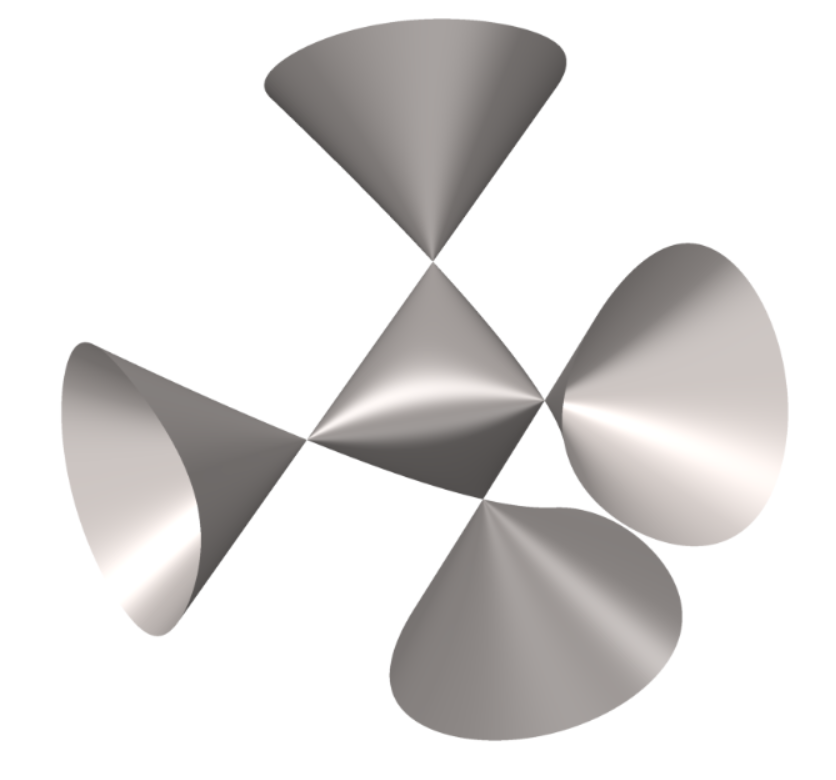}
	\caption{Algebraic variety $x^2+y^2+z^2+2xyz-1=0$}
\end{figure}

\subsection{Thom-Mather Stratification}
The following is a key step in the proof of the Thom-Mather Theorem \ref{C0ESS}. 
\begin{thm}[Thom-Mather]
For every $C^\infty$-generic map from a compact manifold $M$ into $N$, there exists a stratification on $\Sigma_M$ of $M$ and a stratification $\Sigma_N$ on $N$ such that:\begin{enumerate}[$(i)$]
\item The strata of $\Sigma_M$ and $\Sigma_N$ are smooth submanifolds,  
\item the restriction of $f$ to each stratum of $\Sigma_M$ is a submersion onto a stratum of $N$,
\item this stratification is structurally stable: for every perturbation $f'$ of $f$ there are stratifications $\Sigma_M'$ and $\Sigma_N'$ homeomorphic  to respectively  $\Sigma_M$ and $\Sigma_N$, so that $(ii)$ holds for $f'$.\end{enumerate}
\end{thm}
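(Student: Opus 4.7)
The plan is to build both stratifications via transversality arguments in jet and multi-jet spaces, and then to obtain stability through Thom's isotopy lemmas applied to Whitney / Thom-regular stratifications.

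First, I would invoke the Thom--Boardman stratification of the $r$-jet bundle $J^r(M,N)$: this is a canonical locally finite partition into smooth submanifolds $\Sigma^{I}$ indexed by Boardman symbols $I=(i_1,\dots,i_r)$ recording iterated coranks. For $f\in C^\infty(M,N)$, set $\Sigma_M^I := (j^r f)^{-1}(\Sigma^I)$. Thom's jet transversality theorem implies that, for $f$ in a residual (hence dense) subset of $C^\infty(M,N)$, the jet section $j^r f$ is transverse to every $\Sigma^I$, so each $\Sigma_M^I$ is a smooth submanifold of $M$ of the expected codimension, and $(\Sigma_M^I)_I$ is a locally finite partition of $M$. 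Since $M$ is compact and there are only finitely many Boardman symbols of length $r$ bounded by $\min(\dim M,\dim N)$, the partition is finite, and by the Boardman--Mather theorem the strata satisfy Whitney conditions $(a)$ and $(b)$.

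Second, to produce $\Sigma_N$ while enforcing $(ii)$, the obstacle is that distinct source points lying in possibly different source strata may share a target. I would then apply the multi-jet transversality theorem on the multijet bundle ${}_kJ^r(M,N)$ over $k$-tuples of pairwise distinct source points: for $f$ generic and every $k$, the multijet $j^r_k f$ is transverse to every product stratum $\Sigma^{I_1}\times\dots\times\Sigma^{I_k}$ and to the multi-diagonal on the target. This endows $N$ with a stratification whose stratum $\Sigma_N^{(I_1,\dots,I_k)}$ is the set of $y\in N$ having exactly $k$ preimages, distributed as one point in each $\Sigma_M^{I_j}$. By construction, $f\vert\Sigma_M^I$ is a smooth submersion onto a stratum of $\Sigma_N$; refining $\Sigma_M$ along the pullback of $\Sigma_N$ then yields $(i)$ and $(ii)$.

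For $(iii)$, the essential tool is Thom's second isotopy lemma: if $(f,\Sigma_M,\Sigma_N)$ is a Thom-regular stratified map (Whitney regular on $M$ and $N$, plus the $a_f$ condition along fibres), then any controlled vector field on $N$ lifts to a stratum-preserving controlled vector field on $M$ whose flow topologically trivializes $f$ locally. For $f$ generic, the $a_f$ condition also holds, as a separate Thom--Mather genericity result. Since the defining transversality conditions are open, any small $C^\infty$-perturbation $f'$ of $f$ inherits analogous Thom-regular stratifications $\Sigma_M'$ and $\Sigma_N'$. Applying the second isotopy lemma to the one-parameter stratified family $F\colon (x,t)\mapsto ((1-t)f(x)+tf'(x),\,t)$ on $M\times[0,1]$ (embedding $N$ locally in Euclidean space, or working inside a tubular neighbourhood of the graph of $f$) yields stratum-preserving homeomorphisms $h\colon M\to M$ and $h'\colon N\to N$ conjugating $\Sigma_M$ to $\Sigma_M'$ and $\Sigma_N$ to $\Sigma_N'$.

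The hard part is verifying the Whitney and Thom $a_f$ regularity of the Thom--Boardman strata, and constructing the control data (compatible systems of tubular neighbourhoods and distance-to-stratum functions) needed to integrate stratified vector fields. These form the technical core of Mather's program: the geometric statements of the isotopy lemmas are transparent, but their hypotheses demand delicate analysis of how strata approach one another along their closures, and of the extension of horizontal lifts across strata of differing dimensions.
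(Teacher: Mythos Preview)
The paper does not actually prove this theorem; it only records in one sentence that the argument ``involves in particular the jet space, Thom's transversality theorem and Whitney stratifications in semi-analytics geometry.'' Your outline is fully consistent with that hint and is essentially the standard route through Mather's program: stratify jet (and multijet) space, make $j^rf$ transverse to the stratification by Thom transversality, and deduce topological stability from the isotopy lemmas.

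One refinement is worth flagging, and it is precisely what the paper's allusion to \emph{semi-analytic} geometry is pointing at. The raw Thom--Boardman partition of $J^r(M,N)$ into the $\Sigma^I$ is a partition by smooth submanifolds, but it is not in general Whitney $(a)$--$(b)$ regular, nor does transversality to it alone force the Thom $a_f$ condition on the induced source stratification. What Mather actually uses is a canonical refinement of the Thom--Boardman strata obtained from the semi-algebraic structure of jet space (essentially stratifying by $\mathcal K$-orbit types); the general machinery of Whitney stratification of semi-algebraic sets then guarantees regularity, and the $a_f$ condition is obtained as a further genericity statement in this semi-algebraic setting. Your last paragraph correctly identifies this verification of regularity and the construction of control data as ``the hard part,'' so you are aware of the gap; just be explicit that the Thom--Boardman stratification must be replaced by this finer canonical one before the isotopy lemmas apply. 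With that substitution your plan is sound and matches the paper's sketch.
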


The proof of this theorem is extremely interesting, it involves in particular the jet space, Thom's transversality theorem and Whitney stratifications in semi-analytics geometry. 

\subsection{Laminar stratification}
Analogously to singularity theory, a structurally stable $C^1$-diffeomorphism displays a stratification. 
\begin{defi}
 A \emph  {lamination} of $M$ is a locally compact subset $\mathcal L$ of $M$, which is locally homeomorphic to the product of a $\mathbb R^d$ with a locally compact set $T$, so that $(\mathbb R^d \times \{t\})_{t\in T}$ corresponds to a continuous family of submanifolds.
\end{defi}

\begin{defi} A {\emph stratification of laminations} is a stratification  whose strata are endowed with a structure of  lamination.\end{defi}

\begin{prop}[\cite{BeMem}]
Let $f$ be a diffeomorphism  $M$ which satisfies Axiom A and the strong transversality condition. Then the stable set of every basic piece $\Lambda_i$ of $f$ has a structure of lamination $X_i$ whose leaves are stable manifolds. Moreover the family $\Sigma_s:= (X_i)_i$ forms a stratification of laminations such that  $X_i\le X_j$ iff $\Lambda_i \succeq \Lambda_j$ i.e. $W^u(\Lambda_i)\cap W^s(\Lambda_j)\not = \emptyset$.\end{prop}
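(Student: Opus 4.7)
The plan is to establish the three claims of the proposition in sequence: (i) each stable set $X_i := W^s(\Lambda_i)$ carries a lamination structure whose leaves are the individual stable manifolds $W^s(x)$, $x \in \Lambda_i$; (ii) the family $(X_i)_i$ is a stratification of $M$; and (iii) the induced frontier order coincides with the Smale order on basic pieces.

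For (i), I would invoke the classical stable-manifold theorem already stated in the excerpt: on a small neighborhood of $\Lambda_i$, the local stable manifolds $(W^s_\epsilon(z))_{z \in \Lambda_i}$ are $C^r$-embedded discs depending continuously on $z$, yielding a local lamination $\mathcal{L}_i^{loc}$ of $W^s_\epsilon(\Lambda_i)$. Since $X_i = \bigcup_{n \geq 0} f^{-n}(W^s_\epsilon(\Lambda_i))$ and each $f^{-n}$ is a $C^r$-diffeomorphism mapping plaques to plaques, the images glue into a lamination on $X_i$ whose leaves are the global stable manifolds $W^s(z)$, $z \in \Lambda_i$. Distinct leaves cannot merge because $W^s(z) \cap W^s(z') \neq \emptyset$ forces $W^s(z) = W^s(z')$, which is immediate from the characterization of $W^s$ by forward asymptotics.

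For (ii), the strong transversality condition implies the no-cycle condition (by the inclination lemma, as noted in the remark of the excerpt), hence a Smale filtration exists. Using the filtration one sees that $\omega(x) \subset \Lambda_{i(x)}$ for a unique index $i(x)$ and every $x \in M$, giving the partition $M = \bigsqcup_i X_i$; local finiteness is automatic from finiteness of the spectral decomposition. The frontier condition $cl(X_i) \cap X_j \neq \emptyset \Rightarrow cl(X_i) \supset X_j$ is the substance of (iii), which I handle next.

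The real work is the equivalence
\[
X_i \subset cl(X_j) \;\Longleftrightarrow\; W^u(\Lambda_i) \cap W^s(\Lambda_j) \neq \emptyset.
\]
For $(\Leftarrow)$: take $z \in W^u(\Lambda_i) \cap W^s(\Lambda_j)$; the backward iterates $f^{-n}(z) \in X_j$ accumulate along $W^u$ on some $y_0 \in \Lambda_i$. Using transitivity of $\Lambda_i$, density of periodic points, and the inclination lemma applied at periodic $p \in \Lambda_i$, one propagates this accumulation so that $X_j$ is dense near every point of $\Lambda_i$; a final forward-iteration argument transfers density from $\Lambda_i$ to all of $X_i$. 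For $(\Rightarrow)$: if $y_n \in X_j$ converges to $y \in \Lambda_i \subset X_i$, the local product structure near $y$ lets one slide $y_n$ along a local stable leaf onto a point $\tilde y_n \in W^u_\epsilon(y) \subset W^u(\Lambda_i)$; since points on a common local stable leaf share forward asymptotics, $\tilde y_n$ also lies in $W^s(\Lambda_j)$, producing the required heteroclinic intersection. I expect the $(\Leftarrow)$ direction to be the main obstacle: propagating accumulation from a single $y_0 \in \Lambda_i$ to all of $X_i$ requires the strong transversality condition to ensure that the intersections produced by the inclination lemma remain robust under the iterations and holonomies used.
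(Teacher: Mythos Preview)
The paper does not actually prove this proposition: it is quoted from \cite{BeMem} and stated without proof, so there is no ``paper's own argument'' to compare against. I can therefore only assess your outline on its own merits.

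Parts (i) and (ii) are fine and standard. In (iii), the direction $(\Leftarrow)$ via the inclination lemma is the right idea. The direction $(\Rightarrow)$, however, has a real gap. You write that ``the local product structure near $y$ lets one slide $y_n$ along a local stable leaf onto a point $\tilde y_n\in W^u_\epsilon(y)$'' and then conclude $\tilde y_n\in W^s(\Lambda_j)$ because points on a common local stable leaf share forward asymptotics. But $y_n$ is \emph{not} on a genuine local stable leaf of $\Lambda_i$: the local product structure only pairs points of $\Lambda_i$, and $y_n\in W^s(\Lambda_j)$, $j\neq i$, so there is no reason for $y_n$ to lie on any $W^s_\epsilon(z)$ with $z\in\Lambda_i$. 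If instead you extend the stable lamination near $\Lambda_i$ to a (fake) $C^0$ foliation and project along its leaves, those leaves are only forward-contracted while the orbit stays near $\Lambda_i$; since the forward orbit of $y_n$ leaves this neighborhood on its way to $\Lambda_j$, you cannot conclude that $y_n$ and $\tilde y_n$ are forward asymptotic, and the claim $\tilde y_n\in W^s(\Lambda_j)$ fails.

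A correct argument for $(\Rightarrow)$ typically proceeds via the filtration and an induction on the Smale order: one shows that any accumulation point of $W^s(\Lambda_j)$ lies in $W^s(\Lambda_k)$ for some $\Lambda_k\succeq\Lambda_j$ by tracking the first exit of the orbit of $y_n$ from the isolating block of $\Lambda_i$ (which forces passage through $W^u_\epsilon(\Lambda_i)$), and then using that under the strong transversality condition the relation $\succeq$ is transitive. You have identified the wrong direction as ``the main obstacle'': it is $(\Rightarrow)$, not $(\Leftarrow)$, that needs a genuinely new ingredient beyond the inclination lemma.
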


\section{Structural stability of endomorphisms with singularities}
\label{Sec_SS_endo_w_Singu}
We are now ready to study the structural stability of endomorphisms which display a non empty critical set. 


In dimension $2$, Iglesias-Portela-Rovella \cite{IPR08} showed the structural stability of $C^3$-perturbations of the hyperbolic rational functions $f$ which are equivalently stable and whose critical sets do not self-intersect along their orbits, nor intersect the non-wandering set. 

In all these examples, the critical set does not self intersect along its orbit. J. Mather suggested me to generalize a study he did about structural stability of graph of maps.

Let $G:=(V,A)$ be a finite oriented graph with a manifold $M_i$ associated to each vertex $i\in V$, and with a smooth map $f_{ij}\in C^\infty(M_i,M_j)$ associated to each arrow $[i,j]\in A$ from $i$ to $j$.

For $k\in \{0,\infty\}$, such a graph is \emph{ $C^k$-structurally stable} if for every $C^\infty$-perturbation 
$(f'_{ij})_{[i,j]\in A}$ of $(f_{ij})_{[i,j]\in A}$, there exists a family of $C^k$ diffeomorphisms $(h_{i})_i\in \prod_{i\in V} Diff^k(M_i, M_i)$ such that the following diagram commutes:
\[\forall [i,j]\in A\; \begin{array}{rcccl}
& &f'_{ij} & &\\ 
&M_i & \rightarrow&M_j &\\
h_i&\uparrow& f_{ij}&\uparrow&h_j\\
&M_i & \rightarrow&M_j & \end{array}\; .\]

The graph $(V, A)$ is \emph{convergent} if for every $[i,j], [i',j']\in A$ if $i=i'$ then $j=j'$. The graph is \emph{without cycle} if for every $n\ge 1$ and every  $([i_k,i_{k+1}])_{0\le k< n}\in V^n$ it holds $i_n\not= i_0$.

\begin{thm}[Mather]
Let $G$ be a graph of smooth proper maps, convergent and without cycle. The graph is $C^\infty$- structurally stable if the following map is surjective:

$(\sigma_i)_i\in \prod_{i\in V}\chi^\infty(M_i)\mapsto (Tf_{ij}\circ\sigma_i-\sigma_j\circ f_{ij})_{[ij]}\in \prod_{[i, j]\in A}\chi^\infty (f_{ij}).$
\end{thm}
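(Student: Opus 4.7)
The plan is to apply an implicit function theorem argument to the system of conjugacy equations, following the strategy of the Robbin--Robinson proof of Theorem~\ref{SSTRob} and of Mather's infinitesimal stability theorem.

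First, I would parametrize $C^\infty$-diffeomorphisms of each $M_i$ near the identity by sections $\sigma_i\in \chi^\infty(M_i)$ via the exponential map of a fixed Riemannian metric on $M_i$, setting $h_i:=\exp\circ \sigma_i$. The conjugacy equations $h_j\circ f_{ij}=f'_{ij}\circ h_i$ become an equation $\Phi((\sigma_i)_i,(f'_{ij}))=0$, where
\[ \Phi((\sigma_i),(f'_{ij}))_{[i,j]}\;:=\;\sigma_j\circ f_{ij}\;-\;\widetilde{f'_{ij}}(\sigma_i)\;\in\; \chi^\infty(f_{ij}), \]
and $\widetilde{f'_{ij}}$ denotes $f'_{ij}$ read in exponential charts at $x$ and $f_{ij}(x)$. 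The operator $\Phi$ is well defined, smooth, and vanishes at $((0)_i,(f_{ij}))$. A direct computation shows that its partial derivative in the section variable at this base point is, up to sign, precisely the map
\[ L\colon (\sigma_i)_i\mapsto \bigl(Tf_{ij}\circ \sigma_i-\sigma_j\circ f_{ij}\bigr)_{[i,j]\in A}, \]
which is surjective by hypothesis.

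Second, I would use the combinatorial hypotheses to extract a continuous right inverse to $L$. Since $(V,A)$ is finite, convergent and without cycle, one may topologically order $V=\{i_1,\dots,i_N\}$ in such a way that the unique outgoing arrow from $i_k$, when it exists, lands at some $i_\ell$ with $\ell>k$. The system $L((\sigma))=(\tau)$ is then block-triangular: one solves first on the sinks (for which no equation is imposed), then successively for $\sigma_{i_k}$ once $(\sigma_{i_\ell})_{\ell>k}$ are known. Combined with the surjectivity of $L$ and the properness of each $f_{ij}$, this yields a continuous right inverse $J$ of $L$ with tame estimates in the Fréchet topology.

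Third, I would apply Mather's $C^\infty$ implicit function theorem (from \cite{Ma170,Ma270,Ma370,Ma470,Ma570}) to $\Phi$: the existence of $J$ guarantees that for every $(f'_{ij})$ $C^\infty$-close to $(f_{ij})$ there exists a $C^\infty$-small solution $(\sigma_i)_i$ to $\Phi((\sigma_i),(f'_{ij}))=0$, and the induced $h_i=\exp\circ\sigma_i$ are $C^\infty$-diffeomorphisms making all the diagrams commute. The main obstacle is precisely this last step: in the $C^\infty$-Fréchet category the Banachic implicit function theorem is unavailable, and one must replace it either by a Nash--Moser iteration scheme or by Mather's approach based on the Malgrange preparation theorem. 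Both rely crucially on the properness of the $f_{ij}$ to control the loss of derivatives incurred by $J$, and this is what makes the implication ``$L$ surjective $\Rightarrow$ structurally stable'' genuinely deep rather than a formal consequence of infinitesimal stability.
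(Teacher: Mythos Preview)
The paper does not actually prove this theorem. It only states the result, attributing it to Mather, and remarks that ``Mather gave me an unpublished manuscript of Baas relating his proof'' before moving on to the author's own Theorem on attractor--repellor endomorphisms. So there is no paper proof to compare your proposal against.

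Your overall strategy is the correct one and is in the spirit of Mather's infinitesimal stability theorem: parametrize diffeomorphisms near the identity by sections via the exponential map, rewrite the conjugacy system as $\Phi((\sigma_i),(f'_{ij}))=0$, identify the partial derivative at the base point with the operator $L$, and then pass from surjectivity of $L$ to an actual solution by an implicit-function argument adapted to the $C^\infty$ Fr\'echet category. Your final paragraph correctly identifies the genuine depth of the result: the Banach implicit function theorem is unavailable, and one must invoke either a Nash--Moser scheme or, more in Mather's style, the Malgrange preparation theorem.

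One caution about your second step. The triangular solve you sketch does not by itself produce a right inverse to $L$. Once $\sigma_j$ has been fixed at a later vertex, solving
\[
Tf_{ij}\circ\sigma_i \;=\; \tau_{ij}+\sigma_j\circ f_{ij}
\]
for $\sigma_i$ would require $T_xf_{ij}$ to be onto at every $x$, i.e.\ $f_{ij}$ to be a submersion; but the whole point of the theorem is to allow singularities. The surjectivity of $L$ is a \emph{global} hypothesis that genuinely couples the choice of $\sigma_j$ to the solvability for $\sigma_i$, and it cannot be unwound into an elementary back-substitution. The convergent, acyclic structure is indeed used in Mather's argument, but to organize an induction on the depth of the graph in which one applies the preparation-theorem machinery at each stage, not to trivialize the linear algebra. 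So your step~2 should be replaced by an appeal to that machinery rather than a direct triangular solve; with that correction, your outline matches the standard approach.
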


Mather gave me an unpublished manuscript of Baas relating his proof, that I developed to study the structural stability of attractor-repellor endomorphisms with possibly a non-empty critical set. 

\begin{defi}  Let $f$ be a smooth endomorphism of a compact, non necessarily connected manifold.  The endomorphism $f$ is {\emph attractor-repellor} if it satisfies  axiom A, and its basic pieces are either expanding pieces or attractors which $f$ acts bijectively. 
\end{defi}

The following theorem generalizes all the results I know (including 
\cite{IPR08} and \cite{IPR10})  about structurally stable maps with non-empty critical set.

\begin{thm}[Berger \cite{Be12}] Let $f$ be an {attractor-repellor}, smooth endomorphism of a compact, non necessarily connected manifold $M$. If the following conditions are satisfied, then $f$ is $C^\infty$-structurally stable:\begin{itemize}
\item[(i)] the singularities $S$ of $f$ have their orbits that do not intersect the non-wandering set $\Omega$,
\item[(ii)] the restriction of $f$ to $M\setminus \hat \Omega$ is $C^\infty$-infinitesimally stable, with $\hat \Omega:= cl\big(\cup_{n\ge 0} f^{-n}(\Omega)\big)$. In other words, the following map is surjective:
\[\sigma \in \Gamma^\infty(M) \mapsto Df\circ \sigma -\sigma \circ f\in 
 \Gamma^\infty(f)\]
 \item[(iii)] $f$ is transverse to the stable manifold of $A$'s points: for any $y\in A$, for any point $z$ in a local stable manifold $W^s_y$ of $y$, for any $n\ge 0$, and for any $x\in f^{-n}(\{z\})$, we have: 
\[Tf^n(T_xM)+T_zW_y^s=T_zM.\]
\end{itemize} 
\end{thm}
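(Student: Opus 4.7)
The plan is to combine hyperbolic stability near the non-wandering set, Mather-type singularity stability on the wandering region, and a gluing procedure patterned on the graph-of-maps theorem stated just before the theorem. Write $\Omega = A \sqcup R$, where $A$ is the union of attractors on which $f$ acts bijectively and $R$ is the expanding repelling part. Because $f$ is attractor-repellor, the spectral decomposition carries no cycles, so I fix a filtration $\emptyset=M_0\subsetneq M_1\subsetneq\cdots\subsetneq M_N=M$ adapted to $\Omega$ and set $N_i=\mathrm{cl}(M_i\setminus M_{i-1})$, each $N_i$ enclosing exactly one basic piece $\Omega_i$. Hypothesis (i) lets me shrink the neighborhoods of the pieces of $\Omega$ so that no critical orbit enters them.

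Near $\Omega$, Shub's theorem applied to each expanding basic piece of $R$ yields a conjugacy between $f$ and $f'$ on a neighborhood of that piece; Anosov's Theorem~\ref{anosovthm}, applied to the invertible restrictions $f|A_\beta$, yields a conjugacy on a neighborhood of each attractor $A_\beta$. On the wandering region $M\setminus\hat\Omega$, hypothesis (ii) is exactly Mather's $C^\infty$-equivalent infinitesimal stability, so Mather's theorem upgrades it to $C^\infty$-equivalent structural stability on that open set: one gets diffeomorphisms $h,h'$ of source and target conjugating $f|(M\setminus\hat\Omega)$ to $f'|(M\setminus\hat\Omega')$.

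To piece these conjugacies together I would set up a graph of proper smooth maps whose vertices are neighborhoods of the $N_i$'s and whose arrows are the restrictions of $f$ between them. This graph is convergent (the forward image of each $N_i$ meets at most one $N_j$ once neighborhoods are small enough) and acyclic (by the attractor-repellor/no-cycle condition), and so falls in the scope of the graph-of-maps theorem attributed to Mather. The surjectivity of the graph-infinitesimal-stability operator $(\sigma_i)_i \mapsto (Tf_{ij}\circ\sigma_i-\sigma_j\circ f_{ij})_{[ij]}$ is assembled from two inputs: on vertices meeting $\Omega$ the operator is surjective because of the infinitesimal stability underlying the Anosov and Shub proofs (contracting/expanding inverse arguments on $\Gamma^s$ and $\Gamma^u$), and on the vertices lying in $M\setminus\hat\Omega$ surjectivity is furnished directly by (ii). Running Baas's construction on this graph then produces a $C^0$ system of conjugacies, one per vertex, intertwining $f$ and $f'$ along every arrow.

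The main obstacle will be matching the hyperbolic conjugacies near $A$ with the singularity-theoretic conjugacy on $M\setminus\hat\Omega$ across the intermediate region $\hat\Omega\setminus\Omega$, which is contaminated by preimages of critical points. This is precisely what hypothesis (iii) handles: the transversality $Tf^n(T_xM)+T_zW^s_y = T_zM$ at every preimage $x$ of every $z\in W^s_y$ guarantees that the stable laminae $W^s_y$ pull back under $f^{-n}$ to embedded submanifolds through the critical set, yielding a stratification of $\hat\Omega\setminus\Omega$ compatible with both the Mather stratification of the wandering region and the stable foliation of $A$. Propagating the attractor conjugacy $h_A$ backward through the filtration along this stratification, while interpolating with the $h$ produced by Mather's theorem using a partition of unity subordinate to the filtration, yields the global homeomorphism of $M$ conjugating $f$ to $f'$.
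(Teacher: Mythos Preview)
The paper does not actually prove this theorem; it is quoted from \cite{Be12} with only the contextual remark that it was obtained by \emph{developing} (not directly applying) Baas's proof of Mather's graph-of-maps theorem. So there is no detailed argument here to compare against, but that remark already signals that a black-box combination of the graph theorem with Anosov/Shub stability is not what is done, and your outline has a real gap precisely at the seam where such a combination would have to be justified.

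The central problem is the gluing step. You propose to ``interpolat[e] with the $h$ produced by Mather's theorem using a partition of unity''. Conjugacies do not form a convex set: if $h_1\circ f=f'\circ h_1$ and $h_2\circ f=f'\circ h_2$ on overlapping domains, a partition-of-unity combination essentially never satisfies the conjugacy equation. Relatedly, Mather's graph theorem concerns \emph{distinct} manifolds $M_i$ at the vertices and outputs a family $(h_i)_i$ of $C^\infty$ diffeomorphisms with no compatibility on overlaps; in your setup the $N_i$ are overlapping pieces of one manifold $M$ and the hyperbolic conjugacies near $\Omega$ are only $C^0$, so neither the regularity nor the coherence needed to assemble a single global $h$ is provided by the graph machine. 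The way the Robbin--Robinson proof (and, by the paper's indication, the proof in \cite{Be12}) avoids this is to stay at the infinitesimal level throughout: one builds a single global right inverse for $\sigma\mapsto Df\circ\sigma-\sigma\circ f$ on all of $M$, with hypothesis (ii) supplying surjectivity away from $\hat\Omega$, hyperbolicity supplying it near $\Omega$, and hypothesis (iii) ensuring that the pulled-back stable lamination of $A$ is well defined across the critical set so that the two regimes can be spliced \emph{at the level of the linearized equation}, in the spirit of the plane-field construction of Proposition~\ref{sectionEi}. Your proposal names the right ingredients but does not supply this mechanism; producing separate conjugacies first and trying to glue them afterwards is the step that fails.
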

Hypothesis $(ii)$ might seem difficult to verify, but it is not. In \cite{Be12} we apply it to many example, even for map for which the critical set does self intersect along its orbit.

It would be intersecting to investigate how the attractor-repeller could be relaxed to enjoy a greater generality. However the $C^0$-equivalently stable singularities are not well classified and so an optimal theorem is today difficult to obtain. Nevertheless, it is not the case in dimension 2. Indeed it is well known that the structurally stable singularities are locally equivalent to one of the following polynomial (called resp. fold and cusp):
\[(x,y)\mapsto (x^2, y)\quad\text{and} \quad(x,y)\mapsto (x^3+xy, y).\]
Hence here is a natural question:
\begin{prob}
Under which hypothesis an axiom A surface endomorphism with singularity is structurally stable?
\end{prob}

\bibliographystyle{alpha}
\bibliography{references}
\end{document}